\newtheorem{theo}{Theorem}[section]
\newtheorem{prop}[theo]{Proposition}
\newtheorem{lem}[theo]{Lemme}
\newtheorem{cor}[theo]{Corollary}
\theoremstyle{remark}
\newcommand{\R}{{\mathbb{R}}}
\newcommand{\C}{{\mathbb{C}}}
\newcommand{\Z}{{\mathbb{Z}}}
\newcommand{\la}{\lambda} 
\newcommand{\al}{\alpha} 
\newcommand{\be}{\beta} 
\newcommand{\Om}{\Omega} 
\newcommand{\te}{\theta} 
\newcommand{\ga}{\gamma}
\newcommand{\om}{\omega} 
\newcommand{\Si}{\Sigma} 
\newcommand{\si}{\sigma} 
\newcommand{\Ci}{{\mathcal{C}}^{\infty}}
\newcommand{\Lie}{{\mathfrak{g}}} 
\newcommand{\Ad}{\operatorname{Ad}} 
\newcommand{\id}{\operatorname{id}} 
\newcommand{\Hom}{\operatorname{Hom}}
\newcommand{\trace}{\operatorname{tr}}
\newcommand{\alt}{\operatorname{alt}}
\newcommand{\Alc}{{\mathfrak{A}}}   
\newcommand{\Hilb}{{\mathcal{H}}}   
\newcommand{\mtore}{{\operatorname{T}}}  
\newcommand{\Liet}{{\mathfrak{t}}}   
\newcommand{\Lat}{\Lambda} 
\newcommand{\U}{\operatorname{U}}  
\newcommand{\Hp}{{\mathcal{H}}_+}  
\newcommand{\Sl}{\operatorname{Sl}}
\newcommand{\omh}{\Om^{1/2}}
\newcommand{\proj}{{\mathbb{P}}}
\newcommand{\Sp}{\operatorname{Sp}}
\newcommand{\Mp}{\operatorname{Mp}}
\newcommand{\even}{\operatorname{ev}}
\newcommand{\odd}{\operatorname{odd}}
\newcommand{\ind}{\operatorname{ind}}
\newcommand{\Mo}{\Gamma} 
\title{Asymptotic properties of the quantum representations of the
  modular group} 
\author{Laurent CHARLES  \footnote{Institut de
    Math{\'e}matiques de Jussieu (UMR 7586), Universit{\'e} Pierre et
    Marie Curie -- Paris 6, Paris, F-75005 France.}}
\begin{document}

\maketitle

\begin{abstract}
We study the asymptotic behaviour of the quantum representations of
the modular group in the large level limit. We prove that each
element of the modular group acts as a Fourier integral
operator. This provides a link between the classical and quantum
Chern-Simons theories for the torus. From this result we deduce the known asymptotic expansion of the Witten-Reshetikhin-Turaev
invariants of the torus bundles with hyperbolic monodromy.
\end{abstract}

\maketitle

\bibliographystyle{plain}

Quantum Chern-Simons theory was introduced twenty years ago by
Witten \cite{Wi} and Reshetikhin-Turaev \cite{ReTu}. It provides among other things invariants of
three-dimensional manifold and representation of the mapping class
group of surfaces, cf. \cite{BaKi}, \cite{Tu}  for an exposition of the theory and
\cite{Fr} for a survey on recent developments. 
This theory has a semi-classical limit, where the level, an integral 
parameter denoted by $k$, plays the role of the inverse of the Planck
constant. 
In this paper, we are concerned with the torus and its mapping class
group, $\Sl ( 2, \Z )$. We study the large $k$  behaviour of the quantum
representation of the modular group.

The quantum representations may be equivalently defined with algebraic or geometrical
methods. Geometrically, we consider a line bundle, called the
Chern-Simon bundle, over the moduli space of flat $G$-principal bundles on the
torus. Here $G$ is a compact Lie group that we assume to be simple and 
simply connected. Then the modular group acts linearly on the
space of holomorphic sections of the $k$-th tensor power of the
Chern-Simons bundle. 

A part of this construction is standard in geometric quantization: to
any compact K{\"a}hler manifold with an integral fundamental form one
associates the space of holomorphic
sections of a prequantum bundle. In this general context the usual tools of
microlocal analysis have been introduced. In particular given a prequantum
bundle automorphism, we define a class of operators
similar to the Fourier integral operators \cite{oim_qm},
\cite{oim_LS} quantizing it. Our main result, theorem \ref{theo:FIO_Modul}, says that
each element of $\Sl ( 2, \Z)$ acts as a Fourier integral operator on
the quantum spaces, the underlying action on the Chern-Simons bundle
being defined through gauge theory. This establishes a clear link between the
quantum and classical Chern-Simons theories.

As a corollary, we can estimate the character of the quantum
representations of the hyperbolic elements of $\Sl ( 2, \Z)$. More
generally, under a transversality assumption, one proves that the trace
of a Fourier integral operators has an asymptotic expansion, which
generalizes in some sense the Lefschetz fixed point formula
\cite{oim_LS}. 
The characters of the quantum representation of the modular group are the three-dimensional invariants of the torus
bundles. In this way we recover the asymptotic expansion proved by Jeffrey
\cite{Je}, whose leading term is given in terms of the Chern-Simons
invariants and the torsion of some flat bundles over the torus bundle. The proof in \cite{Je}
is completely different and 
relies on the reciprocity formula for Gauss sum. Our result is slightly
more general since we treat any hyperbolic element with any simple
simply connected group $G$. But, what is more important, we hope that
our analytic method will work in other cases. In the companion paper \cite{oim_MCG}, we
prove similar result for the mapping class group in genus $\geqslant
2$. 
 
Besides the semiclassical results, we also give a careful construction
of the quantum representations, comparing the geometric and algebraic
methods. Strictly speaking, we do not have representations of the modular group but only projective representations which lift to 
genuine representations of the appropriate extension of $\Sl (2,
\Z)$. The extensions appearing naturally are not the same in the
geometric and the algebraic approach.

The paper is organised as follows. In section \ref{sec:asympt-expans-torus}, we state our result about the asymptotic
expansion of the trace of the quantum representations.  In section
\ref{sec:symplectic-datas}, we introduce the phase space of the
Chern-Simons theory for the torus, its symplectic structure and
prequantum bundle. The relation with gauge theory is the content of
section \ref{sec:chern-simons-theory}. In section \ref{sec:quantization}, we introduce a
complex structures on the phase space and the associated 
quantum Hilbert spaces. We exhibit basis in terms of theta
functions. The modular group acts naturally on the previous datas but
does not preserve the complex structure. In section
\ref{sec:geom-quant-repr} we identify the quantum spaces associated to the various complex
structures. This leads to the definition of the quantum
representations. In section \ref{sec:modul-tens-categ} we compare these representations with the
ones defined by algebraic methods. The next two sections are devoted to
semi-classical results: section \ref{sec:compl-struct-depend} on the identification of the quantum
spaces and section \ref{sec:asympt-prop-quant} on the quantum
representations. In a first appendix we prove basic facts on
theta functions. In a second appendix we list some notations used in
the paper.

\section{Characters of the quantum representations} \label{sec:asympt-expans-torus}

Let $G$ be a compact simple and simply connected Lie group. The phase
space of the Chern-Simons theory for an oriented surface $\Si$ and
group $G$ is the moduli
space of flat $G$-principal bundles over $\Si$. For a torus, this moduli space
identifies with the quotient $\mtore ^2 /W$, where $\mtore$ is a maximal torus of
$G$ and $W$ is the Weyl group acting diagonally. The modular group
$\Sl ( 2, \Z)$, being the mapping class
group of the torus, acts on this moduli space. 
More explicitly, since $\mtore = \Liet /
\Lat$, with $\Liet$ the Lie algebra of $\mtore$ and $\Lat$ the
integral lattice, we have a bijection $\mtore^2 / W \simeq \Liet^2 / ( \Lat^2 \rtimes
W)$. Identify $\Liet^2$ with $\R^2 \otimes \Liet $, then an element $A
\in \Sl ( 2, \Z)$ acts on the moduli space by sending the class of
$x$ to the class of $(A \otimes \id_{\Liet}  ). x$.

Applying geometric
quantization, we obtain a family of projective representations of the modular group indexed by a positive integer $k$. Since the construction is rather long, we only give in this introduction the representation of the generators  $$ S =  \left(
  \begin{array}{cc} 0 & -1   \\  1  &  0   \end{array} \right) ,
\qquad T =  \left(
  \begin{array}{cc} 1 & 1  \\  0 & 1 \end{array} \right)  $$
of the modular group.  
Let $B$ be the basic inner product of the Lie algebra of $G$. We choose a set of
positive roots and denote by  $\Alc \subset \Liet $ the corresponding
open fundamental Weyl alcove.  We identify the weight lattice $\Lat^*
\subset \Liet^*$ with a lattice  of $\Liet $ via the basic inner
product. The $k$-th representation has a particular basis indexed
by the set $\Alc \cap k^{-1} \Lat^*$. 
For any $\la, \mu \in \Alc \cap k^{-1} \Lat^* $,
let  
$$ t_{\la \mu} =  \delta_{\la,\mu}  \exp ( i \pi k B( \la, \la ) )$$
and 
$$ s_{\la \mu } =   i^p k ^{-\frac{n}{2}}
  \operatorname{Vol} ^{-1}  ( \Liet / \Lat )   \sum_{w \in W } (-1)^{\ell
  (w) }\exp(-  2 i \pi k B( \la , w(\mu) ) )  $$
 where $n$ is the rank of $G$ and $p$ is the integral part of
 $n/2$. 

If the rank of $G$ is even, the map sending $S$ and $T$ to
the matrices $s_{\la \mu}$  and $t_{\la \mu}$ extends to a unitary
representation $R^k_{\even}$ of the modular group. If the rank of $G$ is odd, we
obtain a representation $R^k _{\odd}$ of an extension $\Mp ( 2, \Z)$
of the modular group by
$\Z / 2 \Z$. 
Recall that the metaplectic group $\Mp ( 2, \R)$ is the connected
two-cover of $\Sl ( 2, \R)$. Then $\Mp(2, \Z)$ is defined as the subgroup of the
metaplectic group consisting of the elements which project onto the
modular group. The representation $R^k_{\odd}$ of the elements
projecting onto $S$ and
$T$ is given by the matrices $\pm s_{\la \mu}$ and $\pm t_{\la
  \mu}$. 

\begin{theo} Assume the rank of $G$ is even, then for any hyperbolic
  element $A \in \Sl (2, \Z)$, we have 
$$ \trace  R^k_{\even} (A) = \frac{ (-1)^{2 \epsilon p} }{|W|}
\sum_{\substack{ w \in W, \; x \in \mtore ^2 / \\ (A \otimes w).x = x }}
(-1)^{\ell (w)} 
\frac{ e ^{ ik \theta ( A \otimes w , x ) }}{ | \det ( \id - A \otimes
  w ) | ^{1/2}} + O( k^{-1}) $$
where 
\begin{itemize} 
\item $ \epsilon =0$ if the trace of $A$ is bigger than
$2$ and $\epsilon =1$ otherwise. 
\item $\theta (A \otimes w , x ) = \pi ( B( \mu, p ) - B ( \ga , q) +
  B( \ga , \mu))$ if $x \in \mtore^2 \simeq ( \Liet/\Lat)^2$ is the class of  $(p,q) \in \Liet ^2$ and $(\ga, \mu) = (A \otimes w)(p,q) - (p,q)$. 
\end{itemize}
If the rank of $G$ is odd, let $\tilde {A} \in \Mp ( 2, \R)$
projecting onto an hyperbolic element $A$ of the modular group. Then
the same result holds for the trace of $
R^k_{\odd} (\tilde{A})$ except that the equivalent has to be
multiplied by $\exp ( i \frac{\pi}{2}\ind (\tilde A))$, where
$\ind ( \tilde A) \in \Z$ modulo $4 \Z$.    
\end{theo}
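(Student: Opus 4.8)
The plan is to read off the character as a corollary of the two main constructions of the paper: the identification of $R^k_{\even}(A)$ as a Fourier integral operator quantizing the action of $A$ on the moduli space (Theorem~\ref{theo:FIO_Modul}), together with the stationary-phase formula for the trace of such operators under a transversality hypothesis (the Lefschetz-type formula of \cite{oim_LS}). Write $M = \mtore^2/W$ for the phase space and $\phi_A$ for the symplectomorphism induced by $A \otimes \id_{\Liet}$; then $R^k_{\even}(A)$ is, up to a controlled phase, the Fourier integral operator attached to $\phi_A$, the lift to the Chern-Simons bundle being the gauge-theoretic one.

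First I would lift the trace to the cover $\mtore^2 = \Liet^2/\Lat^2$. The quantum space $\Hilb_k$ is the $W$-anti-invariant part of a space of theta functions on $\mtore^2$, the antisymmetrization accounting for the factor $\sum_{w\in W}(-1)^{\ell(w)}$ already visible in the matrix $s_{\la\mu}$ (this sign rather than invariance is the effect of the half-form twist). Inserting the projector $|W|^{-1}\sum_{w}(-1)^{\ell(w)} w$ onto anti-invariants, and using that $A\otimes\id$ commutes with the $W$-action, one gets
$$ \trace R^k_{\even}(A) = \frac{1}{|W|}\sum_{w\in W}(-1)^{\ell(w)}\,\trace\bigl(\widetilde R^k(A)\circ w\bigr), $$
where $\widetilde R^k(A)\circ w$ is the Fourier integral operator on the cover quantizing the linear symplectomorphism $A\otimes w$ of $\mtore^2$. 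This already produces the index set $\{(w,x): (A\otimes w).x = x\}$ and the signs $(-1)^{\ell(w)}$ of the final formula.

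Next, for each $w$ I would apply the trace formula to $\widetilde R^k(A)\circ w$. The transversality hypothesis is that $\id - A\otimes w$ be invertible, and this is exactly where hyperbolicity enters: grouping the eigenvalues of $A\otimes w$ on $\R^2\otimes\Liet$ one finds
$$ \det(\id - A\otimes w) = \prod_{\zeta}\bigl(1 - (\trace A)\,\zeta + \zeta^2\bigr), $$
the product ranging over the eigenvalues $\zeta$ of $w$, which have modulus one; since $|\trace A| > 2$ each factor $1-(\trace A)\zeta+\zeta^2 = \zeta\,(2\operatorname{Re}\zeta - \trace A)$ is nonzero, so the fixed points are isolated and nondegenerate for every $w$. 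The trace formula then contributes, for each fixed point $x$, a term $e^{ik\theta(A\otimes w,x)}/|\det(\id - A\otimes w)|^{1/2}$, the amplitude being the symplectic determinant and the phase the value of the prequantum action along the path joining $x$ to its image. Computing the latter from the holonomy of the Chern-Simons connection in the coordinates $(p,q)$ should reproduce the stated $\theta(A\otimes w,x) = \pi(B(\mu,p) - B(\ga,q) + B(\ga,\mu))$ with $(\ga,\mu) = (A\otimes w)(p,q) - (p,q)$.

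The hard part will be pinning down the constant phases. The factor $(-1)^{2\epsilon p}$ and, in the odd-rank case, the index correction $\exp(i\tfrac{\pi}{2}\ind(\tilde A))$ are invisible to the naive stationary phase and come instead from the half-form (metaplectic) datum built into the Fourier integral calculus; extracting them amounts to a careful computation of the Maslov index of a path from $\id$ to $A$ in $\Sp(2,\R)=\Sl(2,\R)$ and of its metaplectic lift $\tilde A$, the dichotomy $\trace A > 2$ versus $\trace A < -2$ recorded by $\epsilon$ reflecting the two components of the hyperbolic locus. I would also need to check that fixed points lying on the singular set of the orbifold $M$ bring no extra contribution — which again follows from the nondegeneracy ensured by hyperbolicity — and that the expansion has no intermediate term, so that the remainder is genuinely $O(k^{-1})$.
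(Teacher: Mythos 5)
Your proposal is correct and follows essentially the same route as the paper: insert the alternating projector to get $\trace R_2^{\alt}(A,e) = \frac{1}{|W|}\sum_{w\in W}(-1)^{\ell(w)}\trace\bigl(w\circ R_2(A,e)\bigr)$, recognize each summand via Theorems \ref{theo:FIO_Modul} and \ref{theo:FIO_Weyl} as a Fourier integral operator quantizing $A\otimes w$, and apply the trace formula of \cite{oim_LS}, hyperbolicity giving the required transversality exactly by your eigenvalue argument. The phase bookkeeping you defer as ``the hard part'' is precisely what the paper settles with Proposition \ref{prop:mor-symb} (the symbol $\si(A,e)$ pairs with $A\otimes\id_{\Liet}$ to give a metaplectic element), Lemma \ref{lem:index-computation} ($w$-independence of the index) and Lemma \ref{lem:ind_pair_impair} (yielding $2\epsilon p$, plus $\ind(\tilde A)$ in odd rank), together with the morphisms (\ref{eq:mor_pair}) and (\ref{eq:mor_impair}) selecting the lifts defining $R^k_{\even}$ and $R^k_{\odd}$ --- your ``Maslov index of a path'' is this same locally constant index on the metaplectic group.
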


It is a general property of topological quantum field theories that the
trace of the quantum representation of an element $A$ of the modular group is the
invariant of the  mapping torus 
$$ M_A := \bigl((\R^2 / \Z^2 ) \times \R \bigr)/ (A y,t) \sim
(y, t+1). $$  
Here we ignore the the complications due to the framing of 3-dimensional manifold and the related fact that we
only have a projective representation.  
For any $(x_1,x_2) \in \mtore^2$ and $w \in W$ such that $ (A \otimes w).(x_1,x_2) =
(x_1,x_2)$, consider the flat $G$-principal bundle  $P_A \rightarrow M_A$
whose holonomies along the paths $\ga (s) = [s,0,0]$ , $[0,s,0]$ and
$[0,0,s]$ are respectively $x_1$, $x_2$ and $w^{-1}$. Then 
$ (2 \pi )^{-1} \theta (A \otimes
w, x)$ is the Chern-Simons invariant of $P_A$. This is in agreement with the formula obtained by Witten
using the Feynman path integral (heuristic) definition of the
three-dimensional invariants. We refer the reader to Jeffrey's paper
\cite{Je} for more details. In particular the factor $|\det ( \id
- A \otimes w)|^{-1}$ appears as an integral of the torsion of the
adjoint bundles over the moduli space of flat $G$-principal bundle over $M_A$. 

\section{Lie group notations} \label{sec:notation}

Let $\Lie$ be a compact simple Lie algebra, and $G$ the
corresponding compact connected and simply-connected Lie group. Choose
a maximal torus $\mtore$ of $G$ and denote by $\Liet$ its Lie
algebra. The integral lattice $\Lat$ of $\Liet$ is defined as the
kernel of the exponential map $ \Liet \rightarrow \mtore$. Since $G$
is simply-connected, $\Lat$ is the lattice of $\Liet$ generated by the
coroots $\al^\vee$ for the (real)
 roots $\alpha$.

Let the basic inner product $B$ be the unique invariant inner product
on $\Lie$ such that for each long root $\al$, $B( \al^\vee  ,\al^\vee
) = 2 $. Through the paper, we will use $B$ to
identify $\Liet$ with $\Liet^*$. The basic inner product has the important property that it restricts to an integer-valued
$\Z$-bilinear form on $\Lat$ which takes even values on the diagonal. 

We fix a
set $\Delta_+$ of positive roots and let $\Liet_+ $ be the corresponding
positive open Weyl chamber. Let $\al_0$ be the highest root and $\Alc$
be the open fundamental Weyl  alcove
$$ \Alc := \{ \la \in \Liet_+ / \; \al_0 ( \la ) < 1   \}$$
We denote by $W$ the Weyl group of $(G, \mtore)$. Let $\ell : W \rightarrow \{ \pm 1 \}$ be the alternating
character of $W$.

\section{The symplectic data}\label{sec:symplectic-datas}

In this section we endow $\mtore^2$
with a symplectic form $\om$ and a prequantum bundle $L$, that is a complex Hermitian line bundle together
with a connection of curvature $\frac{1}{i} \om$. Furthermore we
introduce commuting actions of the Weyl group and the modular group on $L$.

\subsection{A prequantum bundle on $\Liet^2$}  \label{sec:preq-bundle-Liet2}

Denote by $p$ and $q$ the projections $\Liet^2 \rightarrow \Liet$ on the first and second factor respectively. Let $\om$ be the symplectic form on $\Liet ^2$ given by 
$$ \om = 2 \pi B( dp, dq ).$$
 Consider the trivial complex line bundle $L_{\Liet^2}$ over $\Liet^2$
 with fiber $\C$ and  connection
 $$d + \frac{\pi}{i} ( B( p , dq ) - B( q, dp )).$$ Its curvature
 is $\frac{1}{i} \om$, so it is a prequantum bundle.

\subsection{Heisenberg group and reduction to $\mtore^2$} \label{sec:heis-group-reduct}
Introduce the (reduced) Heisenberg group $\Liet^2 \times \U (1)$ with multiplication
$$ (x, u).(y, v) = ( x + y , u v \exp \bigl( \tfrac{i}{2} \om (x, y)
\bigr) \bigr)$$
The same formula defines an action of the Heisenberg group on
$L_{\Liet^2} = \Liet^2 \times \C$. This action preserves the trivial metric and the
connection. The lattice $\Lat ^2$ embeds into the Heisenberg group
$$ \Lat^2 \rightarrow \Liet^2 \times \U (1), \qquad ( p, q) \rightarrow (
p,q, \exp  ( i \pi B(p,q)) ) $$
Using that $B$ takes integral values on $\Lat$, we prove that this map is a
group morphism. Hence we get an action of $\Lat^2$ on $L_{\Liet^2}$ by automorphisms of prequantum bundle. 

By quotienting, we obtain a symplectic form on $\mtore ^2 = \Liet^2 /
\Lat ^2$ with a prequantum bundle $L := \Liet^2 \times \C / \Lat ^2 $ over
$\mtore ^2$. 

\subsection{Weyl group} 
Consider the diagonal action of the Weyl group $W$ on $\Liet^2$ and lift
this action trivially on the bundle $L_{\Liet^2}$. Since $W$ acts on
$\Liet$ by isometries, $W$ acts on $\Liet^2$ by linear
symplectomorphisms and on $L_{\Liet^2}$ by isomorphisms of
prequantum bundle.

The Weyl group preserves the integral lattice $\Lat$. Consider the
semi-direct product  $ W \rtimes
\Lat^2$ where $W$ acts diagonally on $\Lat^2$. 
Is is easily checked that
the actions of $\Lat^2$ and $W$ on the prequantum bundle over
$\Liet^2$ generate an action of $ W \rtimes
\Lat^2$. Then, quotienting by $\Lat^2$, we
obtain an action of $W = (W \rtimes
\Lat^2)/ \Lat^2$  on $L$. 
Since the action of the Weyl group on the base $\mtore^2$ is not free, we will not
consider the orbifold quotient $\mtore^2 / W$ and its prequantum
bundle. 

\subsection{Modular group} \label{sec:modular-group}
Let us consider the symplectic action of the modular group $\Mo = \Sl(
2, \Z)$ on $\Liet^2 $ given by 
$$ A. (p,q) = ( ap + bq, cp + d q), \qquad  A =  \left(
  \begin{array}{cc} a & b   \\  c  &  d  \end{array} \right) $$
The trivial lift to the prequantum bundle $L_{\Liet^2}$  preserves the metric and the
connection. Furthermore this action together with
the action of $\Lat^2$ define an action of the semi-direct product
$\Mo \rtimes \Lat^2$. To prove this, one has to use that $B
( p,q) $ is integral when $p, q \in \Lat$ and even if furthermore $p
= q $. Consequently we get an action of the modular group on $L$ by
prequantum bundle isomorphisms. Observe that the Weyl group action on
$L$  commute with the modular action.

\section{Chern-Simons theory} \label{sec:chern-simons-theory}

We explain how the definitions of the previous section 
can be deduced from gauge theory. Our aim
is only to motivate the 
constructions. No proof in the paper relies on the gauge theoretic
considerations. 

The phase space of the Chern-Simons theory for an oriented surface
$\Si$ is the moduli space of representations of the fundamental group
of $\Si$ in $G$. When $\Si $ is a torus, the fundamental group is the
free Abelian group with two generators, so each representation
is given by a pair of commuting elements of $G$ unique up to
conjugation.  In the same way that $G/ \operatorname{Ad} G \simeq
\mtore / W$, one shows that these representations are conjugate to a representation in the maximal torus $\mtore$, uniquely
up to the action of the Weyl group. So the moduli space of
representation for the torus is $\mtore^2 / W$. 

\subsection{Gauge theory presentation} \label{sec:gauge-theory-pres}
Consider the space $\Om^{1} ( \Si, \Lie )$ of connections of the
trivial $G$-principal bundle with base $\Si$. It is a symplectic
vector space with symplectic product given by 
$$ \Om ( a, b) = 2 \pi \int _{\Si} B( a,b ) . $$
The gauge
group $\Ci ( \Si , G)$ acts on $\Om^{1} ( \Si, \Lie )$ by symplectic affine isomorphisms:
$$ g.a = \Ad_g a - g^* \bar{\te}$$
where $\bar{\te} \in \Om^1 (G, \Lie)$ is the right-invariant Maurer-Cartan
form. Each gauge class of flat connections is determined by its
holonomy representation. The
quotient of the space of flat connections by the gauge group
may be viewed as a symplectic quotient which defines a symplectic
structure on  the moduli space of representations. 

In the case $\Si$ is a torus, we can avoid this infinite dimensional
quotient proceeding as follows. Represent $\Si$ as the quotient $\R^2 / \Z^2$ with
coordinates $x, y$. Then the map 
$$ \Liet^2 \rightarrow \Om^{1} ( \Si, \Lie) ,\qquad ( p, q )
\rightarrow pdx + q dy $$
is a symplectic embedding where the symplectic product of $\Liet^2$ is
the one of section \ref{sec:preq-bundle-Liet2}. This
embedding is equivariant with respect to the action of $W \rtimes
\Lat^2 $ on $\Liet^2$ and the morphism from $W \rtimes
\Lat^2 $ to the gauge group sending an element $w\in W$ to the
constant gauge transform $w$ and $(\dot p , \dot q ) \in \Lat^2$ to
$\exp ( - (x\dot p + y \dot q))$. Furthermore each gauge class of flat
connections intersects the image of the embedding.

\subsection{Prequantum bundle}  

Consider the trivial line bundle with
base $\Om^{1} ( \Si, \Lie )$ and connection $d +
\frac{1}{i} \al $, where $\al $ is the primitive of $\Om$ given by $$\al
|_a (b) = \tfrac{1}{2} \Om  (a,b ).$$ This is a prequantum bundle
and the gauge group actions lifts to it in such a way
that it preserves the trivial metric and the connection. Explicitly,
the action is given by
$$ g. ( a,u) = \Bigl( g.a,  \exp \Bigl(  -2i \pi  W(g) - i \pi \int_\Si B(
g^* \te , a ) \Bigr) u \Bigr)   $$
where $\te \in \Om^{1} (G, \Lie) $ is the left invariant Maurer-Cartan one-form and $W (g)$ is
the Wess-Zumino-Witten term
$$ W (g) =  \int _{M} \tilde{g} ^* \chi 
$$  
Here $M$ is any three-dimensional compact oriented manifold with
boundary $\Si$, $\tilde {g} \in \Ci (M, G)$ any extension of $g$ and $\chi $ is
the Cartan three-form defined in terms of the left or right-invariant
Maurer Cartan forms by 
\begin{gather*}
\chi =  \frac{1}{12} B ( [\te,
\te ] , \te) = \frac{1}{12} B ( [\bar \te,
\bar \te ] ,\bar \te)
\end{gather*}
Since $B$ is the basic inner product, the cohomology class of $\chi$ is integral.

Assume now that $\Si$ is a torus and consider the equivariant
embedding of $\Liet^2$ in $\Om^{1} ( \Si, \Lie )$ defined in section
\ref{sec:gauge-theory-pres}. By pulling back, we obtain a prequantum
bundle on $ \Liet^2$ together with an action of $\Lat^2 \rtimes W$ on it. It is not difficult to
check that this bundle and this action are exactly the ones we introduced in
section \ref{sec:symplectic-datas}.  

\subsection{Mapping class group} 

The group of orientation preserving diffeomorphisms of $\Si$ acts
symplectically on $\Om^{1} ( \Si, \Lie)$. The trivial lift to the
prequantum bundle preserves the connection and the trivial metric. After
quotienting by the gauge group, this defines an action of the mapping
class group on the moduli space of representation and its prequantum
bundle. 

When $\Si$ is a torus, we recover the action of $\Mo$ introduced in section \ref{sec:modular-group}. For any $A \in \Mo$, define the diffeomorphism of the torus
$$ \varphi_A ( x, y) = ( dx - cy , -b x + a y)$$
where $a$, $b$, $c,$ and $d$ are the coefficients of $A$. On one hand, we recover
the usual formula by considering the basis $\al = (0,-1)$ and $\be =
(1,0)$. Indeed $\varphi_A(\al) = a \al + b \be$ and $\varphi_A (\be) = c \al + d
\be$. On the other hand, 
$$ \varphi^*_A ( p dx + q dy ) = ( a  p + b q ) dx + ( cp  + d q ) dy$$
which corresponds to the action of section \ref{sec:modular-group}.

\section{Quantization} \label{sec:quantization}

Let us begin with a brief description of the general set-up. Consider a symplectic manifold $(M, \om)$ with a prequantum bundle
$L \rightarrow M$. Assume $(M,\om)$ is
endowed with a compatible positive complex structure, so $\om$ is a
$(1,1)$ form and $ - i \om (Z, \bar{Z}) >0$ for any non vanishing
tangent vector $Z$
of type $(1,0)$.  Then the prequantum bundle has a unique holomorphic
structure such that the local holomorphic sections satisfy the
Cauchy-Riemann equations: 
$$\nabla_{\bar{Z}} s  =0, \qquad \text{ for any vector
field $Z$ of type $(1,0)$.}$$ The quantum space associated to these data
is the space $H^0 (M, L^k)$ of holomorphic sections of $L^k$. It has a
natural scalar product obtained by integrating the punctual scalar
product of sections against the Liouville measure $|\om^n |/ n !$.

In a first subsection we introduce complex structures on $\mtore^2$
and define a basis of the quantum space by using theta functions. We
also describe the action of the matrices $S$ and $T$ of $\Mo$
in these basis. These results are standard. We
provide proofs in appendix. Next we we move on to a subspace of equivariant sections
with respect to the Weyl group action, the so called alternating
sections. We compute the actions of $S$ and $T$ in this space. 

\subsection{Complex structure and theta functions} \label{sec:compl-struct-theta} 
Denote by $\Hp $ the
Poincar{\'e} upper half-plane
$$\Hp = \{  x + i y / x, y \in \R , \; y >0 \}.$$  
Let $\tau \in \Hp$. Identify $\Liet^2$
with $\Liet_\C = \Liet \otimes \C$ by the isomorphism sending $(p, q)$
to $p + \tau q$. Hence $\Liet^2$ becomes a complex vector space
and $\mtore^2$ inherits a complex structure. One may compute the
symplectic form $\om$ in
terms of the complex  coordinate $ \zeta = p + \tau q$
$$ \om = \frac{ 2  \pi}{ \bar{\tau} - \tau  }  B( d \zeta, d \bar{\zeta} ).
 $$
It is a positive real form of type $(1,1)$. So the prequantum bundle $L$ has a unique
holomorphic structure compatible with the connection. We denote by $H^0_\tau
( \mtore^2 , L^k)$ its space of holomorphic sections.
 
Consider the section of $L_{\Liet^2}$ 
$$s  = \exp ( i \pi B( \zeta, q)) $$
Its covariant derivative is $
2 i \pi B( d \zeta, q) \otimes s$. Since this form is of  type  $(1,0)$, $s$ is
holomorphic. Furthermore $s$ doesn't vanish anywhere. So the holomorphic sections of $L^k$
identify with the sections over $\Liet^2$ of the form  $f s^k$ such that $f :
\Liet^2 \rightarrow \C$ is holomorphic and $f s^k$ is $\Lat
^2$-invariant. 

As previously, we embed $\Lat ^*$
in $\Liet$ via the identification given by the basic inner
product. Recall that $\Lat \subset \Lat ^*$. For any $\mu \in k^{-1} \Lat^* $, consider the theta function
 $$ \Theta_{\mu, k } ( p,q ) = \sum_{ \ga \in \mu + \Lat} \exp \bigl( 2 i \pi k \bigl(
  \tfrac{\tau}{2} B( \ga, \ga ) - B( \zeta, \ga ) \bigr) \bigr)  $$  
 This series converges uniformly on compact sets to a holomorphic
 function, it depends only on $\mu $ mod $\Lat$. 

\begin{theo} \label{theo:base_theta}
For any integer $k$, the sections $ \Theta_{\mu , k } s^k$, where $\mu$
runs over $k^{-1}\Lat ^*$ mod $\Lat$, are $\Lat^2$-invariant and form
an orthonormal basis of $H^{0}_\tau ( \mtore^2, L^k)$. Furthermore,
$$ \| \Theta_{\mu,k} s^k \| ^2 = \Bigl( \frac{2 \pi}{k} \Bigr) ^{n/2}  \Bigl( \frac{ 2i\pi
} { \tau - \bar \tau} \Bigr)^{n/2} \operatorname{Vol}( \Liet/ \Lat),$$
where $\operatorname{Vol} ( \Liet / \Lat )$ is
the Riemannian volume determined by $B$.
\end{theo}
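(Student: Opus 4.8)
The plan is to work upstairs on $\Liet^2$ using the nowhere-vanishing holomorphic section $s$, so that every element of $H^0_\tau(\mtore^2,L^k)$ is of the form $fs^k$ with $f$ holomorphic on $\Liet_\C$ and $fs^k$ invariant under the Heisenberg lift of $\Lat^2$. Three things must be checked, and I would treat them in this order: that each $\Theta_{\mu,k}s^k$ is $\Lat^2$-invariant, that the family is mutually orthogonal with the stated common norm (which already yields linear independence), and that it spans. The last point is the crux.

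For invariance I would verify the quasi-periodicity of $\Theta_{\mu,k}s^k$ one lattice direction at a time. In the $p$-direction the summand is literally $\Lat$-periodic: for $\lambda\in\Lat$ one has $k\gamma\in\Lat^*$, so $kB(\lambda,\gamma)=B(\lambda,k\gamma)\in\Z$ by the duality $B(\Lat,\Lat^*)\subset\Z$, and the corresponding shift of $s^k$ reproduces exactly the prequantum factor coming from the Heisenberg embedding. In the $q$-direction, replacing $q$ by $q+\lambda$ shifts $\zeta$ by $\tau\lambda$; reindexing the sum $\gamma\mapsto\gamma+\lambda$ absorbs this shift up to a global factor, whose half-integer part is killed precisely because $B$ is even on the diagonal of $\Lat$, which is what matches the cocycle $\exp(i\pi B(\dot p,\dot q))$ of the integral Heisenberg lift. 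Keeping track of these phases is the one delicate point here.

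For orthogonality and the norm I would use the unfolding trick. First, $|s|^{2k}=\exp(-2\pi k(\Im\tau)B(q,q))$. Expanding $\Theta_{\mu,k}\overline{\Theta_{\nu,k}}$ as a double sum over $\gamma\in\mu+\Lat$ and $\gamma'\in\nu+\Lat$ and integrating first over $p\in\Liet/\Lat$, the only $p$-dependence is $\exp(2\pi iB(p,k\gamma'-k\gamma))$; since $k\gamma'-k\gamma\in\Lat^*$, this integrates to $\operatorname{Vol}(\Liet/\Lat)$ when $\gamma=\gamma'$ and to $0$ otherwise, which forces $\mu\equiv\nu$ modulo $\Lat$ and gives orthogonality of distinct basis vectors. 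On the diagonal the $\gamma$-th term simplifies to $\exp(-2\pi k(\Im\tau)B(q-\gamma,q-\gamma))$, and summing over $\gamma\in\mu+\Lat$ while integrating over $q\in\Liet/\Lat$ unfolds into a single Gaussian integral over all of $\Liet$, equal to $(2k\,\Im\tau)^{-n/2}$. Including the Liouville normalization $(2\pi)^n$ and using $\tau-\bar\tau=2i\,\Im\tau$ turns $(2\pi)^n(2k\,\Im\tau)^{-n/2}\operatorname{Vol}(\Liet/\Lat)$ into the stated constant; note that it is independent of $\mu$, so the family is orthonormal after a common rescaling.

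The main obstacle is completeness. Here I would take an arbitrary $fs^k\in H^0_\tau$ and exploit the same two invariances: invariance in $p$ forces $f$ to be genuinely $\Lat$-periodic, hence to admit a Fourier expansion $f(\zeta)=\sum_{\beta\in\Lat^*}c_\beta\exp(2\pi iB(\zeta,\beta))$; invariance in $q$ then produces a recursion of the form $c_{\beta+k\lambda}=(\text{Gaussian phase})\,c_\beta$ for $\lambda\in\Lat$. Thus the coefficients are determined by their values on a set of representatives of $\Lat^*/k\Lat$, each orbit reassembling into exactly one $\Theta_{\mu,k}$, and the positivity of $\Im\tau$ guarantees both convergence and the absence of any further, exponentially growing, solutions; this exhibits $fs^k$ as a finite combination of the $\Theta_{\mu,k}s^k$. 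Equivalently, one may finish by a dimension count: Riemann--Roch on the abelian variety $\mtore^2$ gives $\dim H^0_\tau(\mtore^2,L^k)=k^n\operatorname{Vol}(\Liet/\Lat)^2=\lvert k^{-1}\Lat^*/\Lat\rvert$, exactly the number of theta functions, so the orthogonality established above already forces them to be a basis.
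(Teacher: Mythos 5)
Your proposal is correct and follows essentially the same route as the paper's appendix: identify holomorphic sections with $\Lat^2$-equivariant functions $f s^k$, expand $f$ in a Fourier series whose $q$-equivariance recursion $c_{\beta + k\la} = (\text{phase})\, c_\beta$ forces $f$ onto the span of the theta series indexed by $\Lat^*/k\Lat$, and compute norms by unfolding the diagonal Gaussian terms over $\Liet/\Lat$ into $\int_{\Liet} e^{-2\pi k \operatorname{Im}(\tau) B(q,q)}\, m(q) = (2k\operatorname{Im}\tau)^{-n/2}$. Your explicit phase-checking for $\Lat^2$-invariance and the alternative Riemann--Roch dimension count are slightly more detailed than the paper, which leaves both the invariance and the coefficient analysis as straightforward computations, but the argument is the same.
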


Recall that the modular group acts on $\mtore ^2$ and its prequantum
bundle. The induced action on the sections of $L^k$
doesn't preserve the space $H^0_\tau (
\mtore ^2, L^k)$, because of the complex structure.  Actually, $A\in \Mo$ acts as a holomorphic map from $(\mtore ^2, j_\tau)$
 to $(\mtore ^2, j_{A \tau})$ with 
   $$ A \tau = \frac{ a \tau - b }{ -c \tau + d } $$
So for any $\tau$, $A$ acts as an isomorphism
\begin{gather*} \label{eq:modaction} 
   H_{\tau}  ^0 (
\mtore ^2, L^k) \rightarrow H_{ A \tau} ^0  (
\mtore ^2, L^k)
\end{gather*}
One may compute explicitly this isomorphism in the basis of theta
functions when $A$ is the matrix $S$ or $T$. We make explicit the dependence in
$\tau$ in our notations to avoid any ambiguity.

\begin{theo} \label{theo:modular-action}
For any $\tau \in \Hp$ and $\mu \in k^{-1} \Lat^*$, one has   
\begin{xalignat*}{2}
  S. \bigl( \Theta^\tau_{\mu, k } s^k_{\tau}
 \bigr)=  & C \sum_{ \mu'
    \in k^{-1} \Lat^*   \operatorname{mod} \Lat } \exp(-  2 i \pi k
B( \mu, \mu') )  \Theta^{S.\tau}_{\mu', k
} \; s^k_{S.\tau} 
\end{xalignat*}
with $C = \bigl( S.\tau / i\bigr) ^{ n/2} k^{-n/2} \operatorname{Vol} ( \Liet / \Lat )^{-1}$ and
 $$ T. \bigl(\Theta^\tau_{\mu, k } s^k_{\tau} \bigr) =  \exp ( i \pi k B( \mu, \mu )
) \Theta^{T.\tau}_{\mu  , k } \; s^k_{T.\tau}  
$$
\end{theo}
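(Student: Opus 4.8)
The plan is to compute the two modular transformations directly at the level of sections over $\Liet^2$, working in the trivialization $fs^k_\tau$ furnished by the nonvanishing holomorphic section $s_\tau = \exp(i\pi B(\zeta,q))$. The essential point is that the modular group acts on the prequantum bundle by the \emph{trivial} lift (section \ref{sec:modular-group}), so $A.(fs^k)$ over $\Liet^2$ is simply $f\circ A^{-1}$ times the pushed-forward trivializing section; I then have to re-express this in terms of $s_{A.\tau}^k$ and read off which theta function appears. For both generators the strategy is identical: transport $\Theta^\tau_{\mu,k}s^k_\tau$, rewrite the exponent in the complex coordinate $\zeta' = p' + (A.\tau)q'$ adapted to the target complex structure $j_{A.\tau}$, and match the result against the basis of Theorem \ref{theo:base_theta} on $H^0_{A.\tau}$.

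\smallskip

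The $T$ case is the easy one and I would treat it first. Since $T.(p,q)=(p+q,q)$, I have $T^{-1}.(p,q)=(p-q,q)$, and the new complex coordinate for $T.\tau = \tau+1$ is $\zeta_{T.\tau} = p + (\tau+1)q = \zeta_\tau + q$. Substituting into the theta series $\Theta^\tau_{\mu,k}$ and into $s_\tau^k$ and regrouping the exponents, the extra terms produced by the shift $\zeta \mapsto \zeta+q$ and by the half-integer shift $\tfrac{\tau}{2}B(\ga,\ga) \mapsto \tfrac{\tau+1}{2}B(\ga,\ga)$ should combine into the clean phase $\exp(i\pi k B(\mu,\mu))$ times $\Theta^{T.\tau}_{\mu,k}s^k_{T.\tau}$. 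Here the key arithmetic fact is that $B$ takes \emph{even} values on the diagonal of $\Lat$ (stated in section \ref{sec:notation}), so that $\exp(i\pi k B(\ga,\ga))$ depends only on $\mu$ mod $\Lat$ and factors out of the sum; this is exactly what makes the coefficient independent of the representative $\ga \in \mu+\Lat$.

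\smallskip

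The $S$ case is the main obstacle, and I expect it to occupy most of the work. Now $S.(p,q)=(-q,p)$, the target is $S.\tau = -1/\tau$, and there is no longer a simple substitution that sends one theta series to another: the transformation is a genuine \emph{Fourier transform / Poisson summation} on the lattice $\Lat^*$, which is responsible both for the Gaussian sum kernel $\exp(-2i\pi k B(\mu,\mu'))$ and for the normalization constant $C$. Concretely, after transporting $\Theta^\tau_{\mu,k}s^k_\tau$ and expressing everything in $\zeta_{S.\tau} = -q - \tau^{-1}p$, I would apply the Poisson summation formula to convert the sum over $\mu+\Lat$ into a sum over the dual lattice, which is again $\Lat^*$ up to the volume factor $\operatorname{Vol}(\Liet/\Lat)^{-1}$; the rank-$n$ Gaussian integral evaluating the Fourier transform of $\exp(i\pi k \tau B(\cdot,\cdot))$ produces the factor $(S.\tau/i)^{n/2}k^{-n/2}$ with the correct branch of the square root. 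The delicate bookkeeping is threefold: keeping track of the cross terms between $p$ and $q$ so that the phase assembles exactly into $-2i\pi k B(\mu,\mu')$, verifying holomorphicity in $\zeta_{S.\tau}$ at each stage, and fixing the branch of $\tau \mapsto (S.\tau/i)^{n/2}$ so that the constant $C$ comes out as stated rather than up to a sign or an eighth root of unity. Since the excerpt defers the standard parts to an appendix, I would isolate the scalar ($n=1$) Poisson summation computation as the core lemma and then take $B$-orthonormal coordinates on $\Liet$ to reduce the rank-$n$ statement to $n$ independent copies, the volume of $\Liet/\Lat$ accounting for the passage between the integer lattice and $\Lat$.
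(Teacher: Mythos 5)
Your proposal follows essentially the same route as the paper's appendix proof, step for step: the trivial prequantum lift, re-expressing the transported section in the target trivialization $s^k_{A.\tau}$, direct substitution plus the evenness of $B$ on the diagonal of $\Lat$ for the $T$ case, and Poisson summation with the rank-$n$ Gaussian Fourier transform producing the kernel $\exp(-2i\pi k B(\mu,\mu'))$, the constant $\bigl( S.\tau/i \bigr)^{n/2} k^{-n/2} \operatorname{Vol}(\Liet/\Lat)^{-1}$ and the dual lattice $k^{-1}\Lat^*$ for the $S$ case. Two inessential caveats: with the paper's action $A.(p,q)=(ap+bq,cp+dq)$ the induced M{\"o}bius action is $A\tau = (a\tau - b)/(-c\tau+d)$, so $T.\tau = \tau - 1$ rather than your $\tau+1$ (your convention corresponds to pullback instead of pushforward), and since $\Lat$ need not be orthogonal in $B$-orthonormal coordinates the lattice sum does not literally split into $n$ scalar copies — the paper instead performs the $n$-dimensional Poisson summation directly with a basis of $\Lat^*$ and its dual basis, the volume factor arising from $\det \bigl( B(\ga_i,\ga_j)\bigr) = \operatorname{Vol}^2(\Liet/\Lat)$, exactly as your volume remark anticipates.
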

Here $(\tau /i)^{n/2}$ is the determination continuous with respect to
$\tau$ and equal
to $1$ when $\tau = i$.

\subsection{Alternating sections}\label{sec:alternating-sections}

The action of the Weyl group on $\mtore^2$  is holomorphic with respect
to the complex structure
defined by any $\tau \in \Hp$. Let us consider the alternating 
sections  of $L^k$, i.e. the sections $\Psi$ satisfying $$ w. \Psi =
(-1)^{\ell (w) } \Psi, \qquad \forall w \in W. $$
For any $\mu \in k^{-1} \Lat^*$, let 
$$ \chi_{\mu,k} = \sum_{w \in W} ( - 1) ^{\ell (w)} \Theta_{w (
  \mu),k}\;  s^k .
$$
Recall that we denote by $\Alc$ the fundamental open Weyl alcove. 
\begin{theo}  \label{theo:basis-alternating-sections}
The family $(\chi_{\mu,k}, \;  \mu \in \Alc \cap k^{-1}
\Lat^*)$ is a basis of the space of alternating holomorphic sections of $L^k$.  
\end{theo}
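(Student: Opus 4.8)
The plan is to show that the alternating sections $\chi_{\mu,k}$ for $\mu \in \Alc \cap k^{-1}\Lat^*$ are nonzero, that they span, and that they are linearly independent.

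First I would analyze the symmetry of the defining data. By Theorem \ref{theo:base_theta} the sections $\Theta_{\nu,k}s^k$, with $\nu$ running over $k^{-1}\Lat^*$ mod $\Lat$, form an orthonormal basis of $H^0_\tau(\mtore^2,L^k)$. The Weyl group acts on this basis: since $W$ acts by isometries preserving $\Lat$ and $\Lat^*$, and the lift to $L_{\Liet^2}$ was chosen trivial, one checks that $w.(\Theta_{\nu,k}s^k) = \Theta_{w(\nu),k}s^k$. Thus $W$ permutes the orthonormal basis according to its action on $k^{-1}\Lat^*/\Lat$. The alternating subspace is then the image of the antisymmetrization projector $\frac{1}{|W|}\sum_{w}(-1)^{\ell(w)}w$, and $\chi_{\mu,k}$ is (up to the factor $|W|$) the antisymmetrization of $\Theta_{\mu,k}s^k$.

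Next I would exploit that antisymmetrizing a permutation basis gives a basis indexed by free orbits. Concretely, $\chi_{\mu,k}=\chi_{\nu,k}$ up to sign if $\nu \in W\mu$, while $\chi_{\mu,k}=0$ exactly when $\mu$ is fixed by some reflection, i.e. lies on a wall. So the nonzero $\chi_{\mu,k}$ are indexed by the free $W$-orbits in $k^{-1}\Lat^*/\Lat$, each contributing one section up to sign, and distinct free orbits give orthogonal (hence independent) sections since they involve disjoint sets of basis vectors. It remains to identify a set of orbit representatives with $\Alc \cap k^{-1}\Lat^*$. Here I would use that the affine Weyl group $W\ltimes\Lat$ (which governs the identification mod $\Lat$ together with the linear $W$-action) acts on $\Liet$ with the closed alcove $\overline{\Alc}$ as a strict fundamental domain. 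Points in the open alcove $\Alc$ have trivial stabilizer under the affine Weyl group, so they represent precisely the free orbits, while boundary points correspond to the vanishing sections. This matches the index set in the statement.

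The main obstacle will be the last identification: carefully translating between the two group actions in play. The sections $\Theta_{\nu,k}s^k$ depend on $\nu$ only mod $\Lat$, and $\chi_{\mu,k}$ involves the \emph{linear} $W$-action $\nu \mapsto w(\nu)$, so the relevant symmetry is the affine Weyl group $\widehat{W}=W\ltimes\Lat$ acting on $k^{-1}\Lat^*$, not $W$ alone. I would verify that $\mu$ lies on a wall of some affine alcove (equivalently has nontrivial $\widehat{W}$-stabilizer) precisely when the antisymmetrization vanishes, invoking the standard fact that $\overline{\Alc}$ is a fundamental domain for $\widehat{W}$ with the interior consisting of points with trivial stabilizer. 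The remaining points of $\Alc\cap k^{-1}\Lat^*$ then biject with the free orbits, completing the proof that $(\chi_{\mu,k})_{\mu\in\Alc\cap k^{-1}\Lat^*}$ is a basis.
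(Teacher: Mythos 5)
Your proposal is correct and takes essentially the same route as the paper's proof: you use the $W$-equivariance $w.(\Theta_{\nu,k}s^k)=\Theta_{w(\nu),k}s^k$ on the orthonormal theta basis, the vanishing of $\chi_{\mu,k}$ for $\mu$ on an affine wall via a reflection in $W \rtimes \Lat$, and the fact that the affine Weyl group acts simply transitively on alcoves so that $\Alc \cap k^{-1}\Lat^*$ indexes exactly the free orbits. Your antisymmetrization-projector framing is merely a more explicit spelling-out of the paper's concluding appeal to theorem \ref{theo:base_theta}, with the correct identification of the relevant group as the affine Weyl group, just as in the paper.
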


\begin{proof} Using that the Weyl group action preserves $\Lat$ and
  $B$, we check that for any $w \in W$, 
$$w. ( \Theta_{\mu,k}\;  s^k ) =  \Theta_{w (
  \mu),k}\;  s^k. $$ 
So $\chi_{\mu,k}$ is alternating. 

For any root $\al$ and integer $n$, the orthogonal reflexion with
respect to the hyperplane $\al^{-1}(  n)$ belongs to the affine Weyl
group $W \rtimes \Lat$. So for any $\mu \in \al^{-1} (n )$, there
exists $w \in W$ with $\ell ( w ) = 1$ such that $w ( \mu ) = \mu$
modulo $\Lat$. Hence  $ \chi_{\mu,k} = -\chi_{w(\mu),k} = - \chi
_{\mu,k}$, so $\chi_{\mu,k}$ vanishes.
  
Recall that the affine Weyl group $W \rtimes \Lat$ acts simply transitively on the set of components of $\Liet \setminus
\cup_{\al, n }  \al^{-1}(n)$ and that $\Alc$ is one of these
components. The result follows from theorem \ref{theo:base_theta}.
\end{proof}

The modular action and the action of the Weyl group on $\mtore^2$ and
$L$ commute. So the representation of the modular group preserves the
subspace of alternating sections.
\begin{theo}  \label{theo:rep_alternating-sections}
For any $\tau \in \Hp$ and $\mu \in \Alc \cap k^{-1} \Lat^*$, one has   
\begin{xalignat*}{2}
 S. \bigl( \chi^{\tau}_{\mu, k } s^k_{ \tau}
 \bigr)=  & C 
 \sum_{\substack{\mu'\in \Alc \cap k^{-1} \Lat^*, \\ w \in W }}
 (-1)^{\ell (w) }\exp(-  2 i \pi k B( \mu, w(\mu') )
 \chi^{S.\tau}_{\mu', k} \; s^k_{S.\tau}
\end{xalignat*}
with $C$ defined as in theorem \ref{theo:modular-action} and
 $$ T. \bigl(\chi^\tau_{\mu, k  } s^k_{\tau} \bigr) =  \exp ( i \pi k B( \mu, \mu )
) \chi^{T.\tau}_{\mu  , k} \; s^k_{T.\tau } 
$$
\end{theo}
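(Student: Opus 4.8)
The plan is to deduce Theorem \ref{theo:rep_alternating-sections} directly from the already-established action of $S$ and $T$ on the full theta basis, Theorem \ref{theo:modular-action}, by summing over the Weyl group. The key observation is that the modular action commutes with the Weyl group action on $\mtore^2$ and on $L$, as noted in section \ref{sec:modular-group}. Hence the operators $S$ and $T$ map alternating sections to alternating sections, and $\chi^\tau_{\mu,k} s^k_\tau = \sum_{w\in W}(-1)^{\ell(w)}\Theta^\tau_{w(\mu),k} s^k_\tau$ by definition. First I would apply $T$: since $T$ acts diagonally in the theta basis with eigenvalue $\exp(i\pi k B(\nu,\nu))$ on $\Theta^\tau_{\nu,k}s^k_\tau$, and $B(w(\mu),w(\mu))=B(\mu,\mu)$ because $W$ acts by isometries for $B$, every summand picks up the \emph{same} scalar $\exp(i\pi k B(\mu,\mu))$. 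Pulling this common factor out of the sum over $w$ immediately yields $T.(\chi^\tau_{\mu,k}s^k_\tau)=\exp(i\pi k B(\mu,\mu))\,\chi^{T.\tau}_{\mu,k}s^k_{T.\tau}$, which is the second claimed formula.

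Next I would treat $S$, which is the substantive case. Applying $S$ termwise to $\chi^\tau_{\mu,k}s^k_\tau=\sum_{w}(-1)^{\ell(w)}\Theta^\tau_{w(\mu),k}s^k_\tau$ and inserting the formula from Theorem \ref{theo:modular-action} gives a double sum
\begin{gather*}
C\sum_{w\in W}(-1)^{\ell(w)}\sum_{\mu'\in k^{-1}\Lat^*\operatorname{mod}\Lat}\exp\bigl(-2i\pi k B(w(\mu),\mu')\bigr)\,\Theta^{S.\tau}_{\mu',k}s^k_{S.\tau}.
\end{gather*}
Because $W$ acts on $\Liet$ by $B$-isometries, $B(w(\mu),\mu')=B(\mu,w^{-1}(\mu'))$, so the phase depends on $\mu'$ only through $w^{-1}(\mu')$; I would then reindex the inner sum by $\nu=w^{-1}(\mu')$, noting that $\mu'\mapsto w^{-1}(\mu')$ permutes the finite set $k^{-1}\Lat^*\operatorname{mod}\Lat$ since $W$ preserves both $\Lat^*$ and $\Lat$. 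This converts the expression into $C\sum_{\nu\operatorname{mod}\Lat}\exp(-2i\pi k B(\mu,\nu))\sum_w(-1)^{\ell(w)}\Theta^{S.\tau}_{w(\nu),k}s^k_{S.\tau}=C\sum_{\nu\operatorname{mod}\Lat}\exp(-2i\pi k B(\mu,\nu))\,\chi^{S.\tau}_{\nu,k}s^k_{S.\tau}$, where the sign $(-1)^{\ell(w)}$ from the reindexing recombines with the original one to reconstruct the alternating sum defining $\chi$.

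The remaining step is to fold the sum over $\nu$ running over a full set of representatives of $k^{-1}\Lat^*\operatorname{mod}\Lat$ into a sum over the alcove $\Alc\cap k^{-1}\Lat^*$ together with an extra sum over $W$, matching the statement's index set. Here I would use the structure recalled in the proof of Theorem \ref{theo:basis-alternating-sections}: the affine Weyl group $W\rtimes\Lat$ acts simply transitively on the chambers of $\Liet\setminus\bigcup_{\al,n}\al^{-1}(n)$, with $\Alc$ one such chamber, so every $\nu$ not lying on a wall is $W\rtimes\Lat$-equivalent to a unique point of $\Alc$; on the walls $\chi_{\nu,k}$ vanishes and contributes nothing. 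Writing $\nu=w(\mu')\bmod\Lat$ with $\mu'\in\Alc$ and using $\chi^{S.\tau}_{w(\mu'),k}=(-1)^{\ell(w)}\chi^{S.\tau}_{\mu',k}$ and $B(\mu,\nu)=B(\mu,w(\mu'))$ produces exactly the double sum $\sum_{\mu'\in\Alc\cap k^{-1}\Lat^*,\,w\in W}(-1)^{\ell(w)}\exp(-2i\pi k B(\mu,w(\mu')))$ in the statement. I expect the main obstacle to be purely bookkeeping: keeping the two uses of the alternating character $(-1)^{\ell(w)}$ consistent through the two successive reindexings (first the $\mu'\mapsto w^{-1}(\mu')$ permutation, then the unfolding of $\nu$ into $\Alc$), and verifying carefully that passing to representatives modulo $\Lat$ is compatible with the $W$-action so that no representatives are double-counted or lost on the walls.
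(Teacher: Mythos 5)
Your proposal is correct and follows essentially the same route as the paper's own proof: termwise application of Theorem \ref{theo:modular-action}, reindexing the sum over $k^{-1}\Lat^*$ mod $\Lat$ via the $B$-isometry of the Weyl action to reconstruct $\chi^{S.\tau}_{\nu,k}$, and then unfolding the sum over classes mod $\Lat$ into $\Alc \cap k^{-1}\Lat^*$ times $W$ using the simply transitive action of the affine Weyl group together with the vanishing of $\chi_{\nu,k}$ on the walls. The bookkeeping issues you flag (signs through the two reindexings, compatibility of the $W$-action with representatives mod $\Lat$) are exactly the points the paper settles with the same facts from the proof of Theorem \ref{theo:basis-alternating-sections}.
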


\begin{proof} 
The second formula follows from theorem \ref{theo:modular-action}
using that the Weyl group acts
isometrically on $\Liet$. Let us prove the first one. By theorem \ref{theo:modular-action},
\begin{xalignat*}{2} 
   S.  ( \chi_{\mu,k}^\tau s^k_\tau ) 
= & C   \sum_{\substack{\mu ' \in  k^{-1} \Lat^* \operatorname{mod}
    \Lat, \\ w \in W }} (-1)^{\ell (w) }\exp(-  2 i \pi k B( w(\mu),
\mu') ) \; \theta_{\mu',k }^{S.\tau } s^k_{S.\tau }  \\
= & C   \sum_{\substack{\mu ' \in  k^{-1} \Lat^* \operatorname{mod}
    \Lat, \\ w \in W }} (-1)^{\ell (w) }\exp(-  2 i \pi k B( \mu,
\mu') ) \; \theta_{w(\mu'),k }^{S.\tau } s^k_{S.\tau } \\
= & C  \sum_{\mu ' \in k^{-1} \Lat^* \operatorname{mod} \Lat } \exp(-  2 i \pi k B( \mu , \mu') ) \;  \chi_{\mu '  ,k }^{S.\tau } s^k_{S.\tau } \\
= & C \sum_{\substack{\mu'\in \Alc \cap k^{-1} \Lat^*, \\ w \in W }}
\exp(-  2 i \pi k B( \mu, w(\mu') )  \;  \chi_{w(\mu ')  ,k }^{S.\tau } s^k_{S.\tau } 
\end{xalignat*}
In the last line, we used that the affine Weyl group acts simply
transitively on the set of connected components of $\Liet \setminus
\bigcup \al^{-1} (n)$ and that $\chi_\mu$ vanishes if $\mu \in
\al^{-1}(n)$. Finally, 
\begin{gather*} 
 S.  ( \chi_{\mu,k}^\tau s^k_\tau ) =  C \sum_{\substack{\mu'\in \Alc \cap k^{-1} \Lat^* \\ w \in W }}
(-1)^{\ell (w) }\exp(-  2 i \pi k B( \mu, w(\mu') )  \;  \chi_{\mu '
  ,k }^{S.\tau }  s^k_{S.\tau }
\end{gather*}
since the $\chi_{\mu,k}$'s are alternating. 
\end{proof}

\section{Geometric quantum representation} \label{sec:geom-quant-repr}

We introduce a representation of the modular group on the quantum
spaces. To do this we  identify the various spaces $H^0_\tau
(\mtore^2, L^k)$ via the sections $\Theta_{\mu,k}^{\tau} s^k_\tau $. Unfortunately
the norm of these sections and the action of the modular group depend on $\tau$ as it
appears in theorems \ref{theo:base_theta} and 
\ref{theo:modular-action}. We introduce half-form bundle to
correct this. 

\subsection{Half-form bundles}
Let us begin with some definitions. Consider a
symplectic manifold $M$ with a prequantum bundle $L$ and a positive
compatible complex structure. Then a half-form bundle is a complex
line bundle $\delta$ over $M$ with an isomorphism from $\delta^2$ to
the canonical bundle of $M$. A half-form bundle admits a natural
metric and a natural holomorphic
structure making the isomorphism with the canonical bundle a morphism of Hermitian holomorphic
bundle. The quantization of $M$ with metaplectic correction is then
the space of holomorphic sections of $L^k$ tensored with $\delta$. The scalar product is defined by integrating the punctual
norm of sections against the Liouville measure.

Let us return to our particular situation. Consider $\Om \in \wedge^n \Liet_\C^*$ such that for a basis
$(\ga_i)$ of $\Lat$, one has $$\Om ( \ga_1\wedge 
\ldots \wedge \ga_n) =  1 .$$ $\Om$  is uniquely defined up to a plus or
minus sign. For any $\tau \in \Hp$, we defined a complex structure $j_\tau$ on $\mtore^2$
via the isomorphism  
$$\mtore^2 \rightarrow \Liet_\C / (\Lat + \tau \Lat), \qquad [p,q]
\rightarrow [p + \tau q] . $$ 
So the holomorphic tangent bundle of $(\mtore^2, j_\tau)$ is naturally
isomorphic to the trivial bundle with fiber $\Liet_\C$. Consequently the
canonical bundle is naturally isomorphic to
the 
trivial bundle with fiber $\wedge^n \Liet_\C^*$. Denote by
$\Om_\tau$ the section of the canonical bundle which is sent into
the constant section equal to $\Om$ by this trivialization. 

\begin{lem}  \label{lem:norm_can_bundle}
The section $\Om_\tau$ is holomorphic and has a constant punctual norm
equal to $\bigl( \frac{ \tau - \bar \tau }{2 i \pi } \bigr)
^{n/2} \operatorname{Vol}( \Liet / \Lat ) ^{-1} $.
\end{lem}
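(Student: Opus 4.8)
The plan is to work entirely in the global holomorphic frame of the canonical bundle furnished by the complex coordinate $\zeta = p + \tau q$, in which both assertions reduce to elementary computations on the flat complex torus $\Liet_\C/(\Lat + \tau\Lat)$.

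First I would settle holomorphicity. Taking the basis $(\ga_i)$ of $\Lat$ as a $\C$-basis of $\Liet_\C$ and writing $\zeta = \sum_i \zeta^i \ga_i$, the functions $\zeta^i$ are global holomorphic coordinates on $\mtore^2$ for $j_\tau$, and the translation-invariant forms $d\zeta^i$ trivialize the holomorphic cotangent bundle. Under the trivialization described just before the statement, $\Om_\tau$ is exactly the constant section $d\zeta^1 \wedge \cdots \wedge d\zeta^n$ of the canonical bundle $\wedge^n T^{*(1,0)}$. This is a translation-invariant $(n,0)$-form, hence annihilated by $\bar\partial$, so it is a holomorphic section; in particular $\Om_\tau$ is holomorphic. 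Its punctual norm is automatically constant, since both the Kähler metric (the quotient of a flat translation-invariant metric on $\Liet_\C$) and the section $\Om_\tau$ are translation-invariant.

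It then remains to compute that constant. The key input is the expression $\om = \frac{2\pi}{\bar\tau - \tau} B(d\zeta, d\bar\zeta)$ established in Section \ref{sec:compl-struct-theta}. From it I would read off the Hermitian metric $h$ on the holomorphic tangent bundle: writing $\om = i \sum_{ij} h_{i\bar j}\, d\zeta^i \wedge d\bar\zeta^j$ and simplifying $\frac{2\pi}{i(\bar\tau - \tau)}$ gives $h_{i\bar j} = \frac{\pi}{\operatorname{Im}\tau} B(\ga_i, \ga_j)$. The induced metric on the canonical bundle assigns to $d\zeta^1\wedge\cdots\wedge d\zeta^n$ the squared norm $\det(h_{i\bar j})^{-1}$, so that $\|\Om_\tau\|^2 = (\operatorname{Im}\tau/\pi)^n \det(B(\ga_i,\ga_j))^{-1}$. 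Finally I would invoke that $\det(B(\ga_i,\ga_j))$ is the Gram determinant of the lattice $\Lat$ for the inner product $B$, whence $\det(B(\ga_i,\ga_j)) = \operatorname{Vol}(\Liet/\Lat)^2$; combined with $\frac{\tau - \bar\tau}{2 i\pi} = \frac{\operatorname{Im}\tau}{\pi}$ this yields the announced value $\bigl(\frac{\tau-\bar\tau}{2i\pi}\bigr)^{n/2}\operatorname{Vol}(\Liet/\Lat)^{-1}$.

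The only delicate point is the bookkeeping of constants and conventions: pinning down the precise normalization relating the Kähler form $\om$ to the Hermitian matrix $h_{i\bar j}$ on $T^{(1,0)}$, and then the passage to the induced metric on $\wedge^n T^{*(1,0)}$, where the determinant gets inverted. Everything else—the holomorphicity and the identification of the Gram determinant with $\operatorname{Vol}(\Liet/\Lat)^2$—is routine once the frame $d\zeta^1\wedge\cdots\wedge d\zeta^n$ is in place.
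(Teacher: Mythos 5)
Your proof is correct and takes essentially the same route as the paper: both express $\Om_\tau$ in a constant global frame of $(n,0)$-forms and compute its punctual norm from the flat K\"ahler metric determined by $\om = \frac{2\pi}{\bar\tau - \tau} B(d\zeta, d\bar\zeta)$. The only difference is cosmetic---you use the lattice basis $(\ga_i)$, so that $\Om_\tau$ is the unit frame and $\operatorname{Vol}(\Liet/\Lat)^2$ enters as the Gram determinant $\det\bigl(B(\ga_i,\ga_j)\bigr)$, whereas the paper works in an orthonormal basis of $\Liet$, making the metric diagonal and putting the factor $\pm\operatorname{Vol}(\Liet/\Lat)^{-1}$ into the frame coefficient of $\Om_\tau$ instead; the two bookkeepings agree.
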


\begin{proof}  Introduce an orthonormal basis $(u_i)$ of $\Liet$ and
denote by $(p_i)$ and $(q_i)$  the associated
linear coordinates of $\Liet^2$. Let $\zeta_\tau^i$ be the complex
coordinate $p_i + \tau q_i$. Then  if  $(\ga_i)$ is a basis of $\Lat$,
one has 
\begin{xalignat*}{2} 
 d\zeta^1_\tau \wedge \ldots \wedge d \zeta^n_\tau ( \ga_1, \ldots ,
\ga_n ) =  & dp_1 \wedge \ldots \wedge d p_n ( \ga_1, \ldots ,
\ga_n ) \\ = &\pm \operatorname{Vol} ( \Liet / \Lat )
\end{xalignat*}
where the plus or minus sign depends on the orientation of the various
basis. Consequently 
\begin{gather} \label{eq:omtau}
 \Om_\tau = \pm \operatorname{Vol} ( \Liet / \Lat ) ^{-1}
d\zeta^1_\tau \wedge \ldots \wedge d \zeta^n_\tau
\end{gather}
Now, by definition the Hermitian product of two tangent vectors $X,Y$ of type
$(1,0)$ is given by $\frac{1}{i} \om (X, \overline{Y})$. Since 
$$ \om = 2 \pi \sum dp_i \wedge d q_i = i \frac{2i \pi}{ \tau - \bar
  \tau} \sum d \zeta^i_\tau \wedge d \bar{\zeta}_\tau^i$$
the vectors $\partial_{\zeta_{\tau}^i}$ are mutually orthogonal and
$$ \bigl| \partial_{\zeta_{\tau}^i} \bigr|^2 = \frac{ 2 i \pi }{\tau -
  \overline{\tau}} .$$
So the $d \zeta^i_\tau$ are mutually orthogonal and
$$ \bigl| d \zeta_{\tau}^i \bigr|^2 = \frac{\tau -
  \overline{\tau}}{2 i \pi} $$
which implies 
$$ | d\zeta^1_\tau \wedge \ldots \wedge d \zeta^n_\tau |^2 = \Bigl(
\frac{\tau - \bar \tau}{2 i \pi} \Bigr)^{n}$$
and concludes the proof.   
\end{proof}

Let $\delta$ be a complex vector line and $\varphi$ be an
isomorphism $\delta^{\otimes 2} \rightarrow \wedge^{\operatorname{top}}
\Liet_\C^* $. Let $\omh \in \delta$ be such that $$\varphi
(\omh \otimes \omh ) = \Om.$$
For any $\tau \in \Hp$, the trivial bundle $\delta_{\tau}$  with base $\mtore^2$ and fiber $\delta$ is a
half-form bundle, with the squaring map sending $\omh$ into
$\Om_\tau$. By the previous lemma, the constant section equal to
$\Om^{1/2}$ is a holomorphic section of $\delta_\tau$ with constant
punctual norm equal to $\bigl( \frac{ \tau - \bar \tau }{2 i \pi } \bigr)
^{n/4} \operatorname{Vol}( \Liet / \Lat ) ^{-1/2} $.

Instead  of sections of $L^k$, consider now sections
of $L^k \otimes \delta_{\tau}$. The multiplication by $\omh$ is an isomorphism
$$H^{0}_\tau ( \mtore^2, L^k) \simeq H^{0}_\tau ( \mtore^2, L^k \otimes \delta_{\tau} )$$
of vector space. We deduce from theorem \ref{theo:base_theta} and lemma
\ref{lem:norm_can_bundle} the 
\begin{theo} \label{theo:norm_half_form}
For any $\tau \in \Hp$, the sections $$ \Theta_{\mu,k}^{\tau}
  s^k_\tau \otimes \omh, \qquad  \mu \in k^{-1}\Lat ^* \text{ mod }\Lat,$$ form a basis of
  $H^{0}_\tau ( \mtore^2, L^k \otimes \delta_{\tau} )$. They are mutually
  orthogonal and  
$$ \| \Theta_{\mu,k}^{\tau} s^k_\tau \otimes \omh \| ^2 = \Bigl( \frac{2 \pi}{k} \Bigr) ^{n/2} . $$
\end{theo}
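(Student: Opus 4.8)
The plan is to deduce the statement directly from Theorem \ref{theo:base_theta}, Lemma \ref{lem:norm_can_bundle}, and the computation of the norm of $\omh$ given just above, exploiting that multiplication by $\omh$ is an isomorphism and that $\omh$ has \emph{constant} punctual norm. For the basis claim, since $\omh$ is a nowhere vanishing holomorphic section of $\delta_\tau$, multiplication by it is the vector space isomorphism $H^0_\tau(\mtore^2, L^k) \simeq H^0_\tau(\mtore^2, L^k \otimes \delta_\tau)$ recorded before the theorem; by Theorem \ref{theo:base_theta} the family $(\Theta_{\mu,k}^\tau s^k_\tau)_\mu$ is a basis of the source, so its image $(\Theta_{\mu,k}^\tau s^k_\tau \otimes \omh)_\mu$ is a basis of the target.

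For the metric part, I would use that the punctual norm on $L^k \otimes \delta_\tau$ is the product of the punctual norms on the two factors. Since the punctual norm of $\omh$ is the constant $\bigl( \frac{\tau - \bar\tau}{2i\pi}\bigr)^{n/4}\operatorname{Vol}(\Liet/\Lat)^{-1/2}$, the scalar factor $|\omh|^2 = \bigl(\frac{\tau-\bar\tau}{2i\pi}\bigr)^{n/2}\operatorname{Vol}(\Liet/\Lat)^{-1}$ pulls out of the Liouville integral. Hence the scalar product on $H^0_\tau(\mtore^2, L^k \otimes \delta_\tau)$ is exactly $|\omh|^2$ times the one of Theorem \ref{theo:base_theta}. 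Mutual orthogonality of the sections $\Theta_{\mu,k}^\tau s^k_\tau \otimes \omh$ then follows at once from the orthogonality in that theorem, and one has
$$ \| \Theta_{\mu,k}^\tau s^k_\tau \otimes \omh \|^2 = |\omh|^2 \cdot \| \Theta_{\mu,k}^\tau s^k_\tau \|^2 . $$

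Finally I would substitute the two norm formulas and observe the cancellation: the factors $\bigl(\frac{\tau-\bar\tau}{2i\pi}\bigr)^{n/2}$ and $\bigl(\frac{2i\pi}{\tau-\bar\tau}\bigr)^{n/2}$ are reciprocal, and the two powers of $\operatorname{Vol}(\Liet/\Lat)$ cancel, leaving $\bigl(\frac{2\pi}{k}\bigr)^{n/2}$. There is no real obstacle: the whole point of the half-form correction is precisely to absorb the $\tau$-dependent factor of Theorem \ref{theo:base_theta}, and the only step deserving attention is confirming that $|\omh|^2$ is genuinely constant over $\mtore^2$ — which is what permits it to factor out of the integral — this being exactly the content of Lemma \ref{lem:norm_can_bundle}.
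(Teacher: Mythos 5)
Your proposal is correct and is exactly the argument the paper intends: the paper deduces Theorem \ref{theo:norm_half_form} from Theorem \ref{theo:base_theta} and Lemma \ref{lem:norm_can_bundle} in precisely this way, with the constant punctual norm of $\omh$ factoring out of the Liouville integral and cancelling the $\tau$-dependent factor $\bigl(\frac{2i\pi}{\tau-\bar\tau}\bigr)^{n/2}\operatorname{Vol}(\Liet/\Lat)$. Your write-up merely spells out the cancellation the paper leaves implicit.
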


For any  $\tau_1$ and $\tau_2$ in
$\Hp$, let $\Psi_{\tau_1, \tau_2,k}$ be the isomorphism from
$H^{0}_{\tau_1} ( \mtore^2, L^k \otimes \delta_{\tau_1} )$ to $H^{0}_{\tau_2} ( \mtore^2, L^k \otimes \delta_{\tau_2}
)$ defined by 
$$ \Psi _{\tau_1, \tau_2,k} ( \Theta_{\mu,k}^ {\tau_1}  s^k_{\tau_1}
\otimes \omh ) = \Theta_{\mu,k}^{ \tau_2} s^k_{\tau_2} \otimes \omh,
\qquad \forall \mu .$$
By the previous theorem, $ \Psi _{\tau_1, \tau_2,k}$
is a unitary map. 

\subsection{Modular action} \label{sec:modular-action}

Let $A \in \Mo$ be the matrix with coefficients $a$, $b$, $c$ and $d$. The map $\varphi_A ( p,q )= ( ap +b q, cp + dq)$ is a
holomorphic map from $(\mtore^2 , j_\tau)$ to $(\mtore^2, j_{A. \tau})$. Using the
coordinates introduced in the proof of lemma \ref{lem:norm_can_bundle}, we show that 
\begin{gather} \label{eq:1}
 \varphi_{A} ^* \Om_{ A.\tau} = ( - c \tau + d )^{-n} \Om_{\tau}.
\end{gather}
We will lift the action of $A$ to the half-form bundles in such a way
that it squares to $(\varphi_A^*)^{-1}$. Since $(- c \tau +d)^n$ doesn't
admit a preferred square root, we have to pass to an extension of the
modular group. 

Let $\Mo_2$ be the set of  pairs $(A, e)$ where $A \in \Mo$
and $ e$ is a continuous function from $\Hp$ to $\C$ satisfying
$$e(\tau)^2 =
( - c\tau + d)^n \quad \text{ if } \quad A =   \left(
  \begin{array}{cc} a & b   \\  c  &  d  \end{array} \right)$$
$\Mo_2$ is a group with the product given by $(A,e). ( A', e') = ( A A', e'')$ where $e'' (
\tau ) = e ( A' \tau ) e' ( \tau)$.  
$\Mo_2$ acts on the product $\Hp \times \mtore^2 \times \delta$
$$ ( A, e) . \bigl( \tau, p, q, z \omh \bigr) = \Bigl( \frac{ a \tau - b }{
-  c \tau + d} ,  ap + b q, cp + dq , z e( \tau) \omh \Bigr)$$
Restricting to a particular value $\tau$, we obtain a morphism from
$\delta_\tau$ to $\delta_{A.\tau}$ lifting the action of $A$ on $\mtore
^2$. The important point is that
the square of this isomorphism is the natural map between the canonical bundles of
$(\mtore^2, j_{\tau})$ and $( \mtore^2 , j_{A \tau} )$. 
Since the Hermitian and holomorphic structures of a half-form bundle
are determined by the corresponding canonical bundle, the morphism
$\delta_{\tau} \rightarrow \delta_{A.\tau}$ that we consider is a unitary holomorphic isomorphism. 

As previously we lift trivially the action of $\Mo$ to the prequantum bundle. Then we obtain for any $(A,e, \tau) \in
\Mo_2 \times \Hp$ an isomorphism
$$   \varphi_{(A,e)^{-1},\tau}^* : H_{\tau}  ^0 (
\mtore ^2, L^k \otimes \delta_{\tau }) \rightarrow H_{A.\tau} ^0  (
\mtore ^2, L^k \otimes \delta_{A.\tau} ) $$ 
By trivial reason, it is a unitary map.  
Miraculously, these isomorphisms  all fit together.

\begin{prop} For any $\tau_1
 , \tau_2 \in \Hp$ and any $(A,e) \in \Mo_2$, the diagram 
$$ \begin{CD} H_{\tau_1}  ^0 (
\mtore ^2, L^k \otimes \delta_{\tau_1 }) @>
 \varphi_{(A,e)^{-1},\tau}^*   >>   H_{A.\tau_1} ^0  (
\mtore ^2, L^k \otimes \delta_{A.\tau_1} ) \\ 
 @VV\Psi_{\tau_1, \tau_2}V  @VV\Psi_{A.\tau_1, A.\tau_2}V \\
H_{\tau_2}  ^0 (
\mtore ^2, L^k \otimes \delta_{\tau_2 }) @>
\varphi_{(A,e)^{-1},\tau}^* >>   H_{A.\tau_2} ^0  (
\mtore ^2, L^k \otimes \delta_{A.\tau_2} )
\end{CD} $$
commute. 
\end{prop}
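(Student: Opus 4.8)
The plan is to test commutativity on the orthonormal basis of Theorem \ref{theo:norm_half_form}. Every arrow in the square is $\C$-linear, and $\Psi_{\tau_1,\tau_2}$ sends the basis vector indexed by $\mu$ at $\tau_1$ to the one indexed by $\mu$ at $\tau_2$. Writing $U(A,e)_\tau := \varphi^*_{(A,e)^{-1},\tau}$ and expanding
$$ U(A,e)_\tau \bigl( \Theta^\tau_{\mu,k} s^k_\tau \otimes \omh \bigr) = \sum_{\nu} M_{\nu\mu}(\tau)\, \Theta^{A.\tau}_{\nu,k}\, s^k_{A.\tau}\otimes \omh , $$
a direct chase of both ways around the diagram shows that the square commutes for all $\tau_1,\tau_2$ if and only if the matrix coefficients $M_{\nu\mu}(\tau)$ are independent of $\tau$. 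Thus the statement is equivalent to the $\tau$-independence of the matrix of the modular action in the half-form corrected theta basis.

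I would first reduce to the generators of $\Mo_2$. The property ``the square commutes'' is multiplicative in $(A,e)$: the $\Mo_2$-action composes as $U\bigl((A_1,e_1)(A_2,e_2)\bigr)_\tau = U(A_1,e_1)_{A_2.\tau}\circ U(A_2,e_2)_\tau$, and feeding the hypothesis for $(A_1,e_1)$ at the pair $A_2.\tau_1, A_2.\tau_2$ into the hypothesis for $(A_2,e_2)$ yields commutativity for the product. The central element $(\id,-1)$ acts on each $H_\tau$ by the scalar $-1$, so its square trivially commutes. As $\Mo_2$ is generated by $(\id,-1)$ together with lifts of $S$ and $T$, it suffices to treat $S$ and $T$.

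For $T$ one has $c=0$, $d=1$, so $e(\tau)$ is constant, while Theorem \ref{theo:modular-action} gives the coefficient $\exp(i\pi k B(\mu,\mu))$, plainly independent of $\tau$. For $S$, Theorem \ref{theo:modular-action} produces the prequantum factor $C(\tau)=(S.\tau/i)^{n/2}k^{-n/2}\operatorname{Vol}(\Liet/\Lat)^{-1}$, and the half-form lift multiplies $\omh$ by $e(\tau)$ with $e(\tau)^2=(-\tau)^n$, so that $M_{\nu\mu}(\tau)=e(\tau)C(\tau)\exp(-2i\pi k B(\mu,\nu))$. The heart of the matter is that $e(\tau)C(\tau)$ is constant: it depends continuously on $\tau$, and since $S.\tau=-1/\tau$,
$$ \bigl(e(\tau)C(\tau)\bigr)^2 = (-\tau)^n\,(S.\tau/i)^n\, k^{-n}\operatorname{Vol}(\Liet/\Lat)^{-2} = i^{-n}\,k^{-n}\,\operatorname{Vol}(\Liet/\Lat)^{-2}, $$
which does not involve $\tau$; a continuous function on the connected set $\Hp$ with constant nonzero square is itself constant. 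Hence $M_{\nu\mu}(\tau)$ is $\tau$-independent for $S$ as well, and the diagram commutes.

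The main obstacle is precisely this cancellation for $S$, where the metaplectic correction does its work: the $\tau$-dependent factor $(S.\tau/i)^{n/2}$ coming from the norm in Theorem \ref{theo:base_theta} is exactly undone by the half-form transformation law $\varphi_A^*\Om_{A.\tau}=(-c\tau+d)^{-n}\Om_\tau$ of \eqref{eq:1}. Everything else --- the passage to a basis, the multiplicativity over $\Mo_2$, and the trivial cases $T$ and $(\id,-1)$ --- is routine. Some care is needed so that the continuous branches of $e(\tau)$ and of $(S.\tau/i)^{n/2}$ match those fixed in the statements, but the connectedness argument makes an explicit branch computation unnecessary.
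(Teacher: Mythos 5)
Your proof is correct and follows essentially the same route as the paper: reduce to the generators $(\id,-1)$, $(T,1)$, $(S,e)$ of $\Mo_2$, observe the $T$-matrix is manifestly $\tau$-independent, and show the combined factor $e(\tau)C(\tau)$ for $S$ is constant. The paper simply asserts $Ce(\tau)=e(i)k^{-n/2}\operatorname{Vol}(\Liet/\Lat)^{-1}$; your squaring-plus-connectedness argument supplies the justification for that assertion, but it is an elaboration of the same step, not a different method.
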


\begin{proof} It is sufficient to prove it for $(A,e)= (T,1)$, $(S,e)$
  or $(\id , -1)$ since
  these elements generate $\Mo$. The actions of $(T,1)$ and
  $(S,e)$ in the basis $ \Theta^\tau_{\mu, k } s^k_{\tau} \omh$ are
  given by the same formulas as in theorem \ref{theo:modular-action}
  except that the constant $C$ has to be replaced by 
$$ C' = C e( \tau) =  e ( i ) k^{-n/2} \operatorname{Vol} ( \Liet / \Lat )^{-1}
$$
This proves the result because $C'$ does not depend on $\tau$.
\end{proof}

\begin{cor}\label{cor:def_R2}
For any $\tau$, the map 
$$ R_2: (A,e) \rightarrow  \varphi_{(A,e)^{-1},\tau}^* \circ \Psi_{\tau,
  A^{-1}\tau}$$
 is a unitary representation of $\Mo_2$ on $ H_{\tau}  ^0 (
\mtore ^2, L^k \otimes \delta_{\tau })$. 
\end{cor}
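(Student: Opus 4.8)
The plan is to establish the three properties that make $R_2$ a unitary representation: that each $R_2(A,e)$ is a well-defined endomorphism of $H^0_\tau(\mtore^2, L^k\otimes\delta_\tau)$, that it is unitary, and that $R_2$ is a group homomorphism. Throughout I abbreviate $U^{\sigma}_{g} := \varphi^{*}_{g^{-1},\sigma}$ for $g=(A,e)\in\Mo_2$, so that $U^\sigma_g$ is the unitary isomorphism $H^0_\sigma(\mtore^2, L^k\otimes\delta_\sigma)\to H^0_{A\sigma}(\mtore^2, L^k\otimes\delta_{A\sigma})$ and $R_2(g)=U^{A^{-1}\tau}_{g}\circ\Psi_{\tau, A^{-1}\tau}$. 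First I would check the base points: $\Psi_{\tau,A^{-1}\tau}$ carries $H^0_\tau$ to $H^0_{A^{-1}\tau}$ and $U^{A^{-1}\tau}_{g}$ then carries $H^0_{A^{-1}\tau}$ to $H^0_{A(A^{-1}\tau)}=H^0_\tau$, so $R_2(g)$ is indeed an endomorphism of $H^0_\tau$. Unitarity is then immediate, since $\Psi_{\tau,A^{-1}\tau}$ and $U^{A^{-1}\tau}_g$ are both unitary (the former by Theorem \ref{theo:norm_half_form}, the latter by the construction of the half-form lift, which is a unitary holomorphic isomorphism composed with the trivial prequantum lift), and a composition of unitaries is unitary.

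The homomorphism property rests on two structural identities that I would isolate beforehand. The first is the cocycle law $\Psi_{\sigma_2,\sigma_3}\circ\Psi_{\sigma_1,\sigma_2}=\Psi_{\sigma_1,\sigma_3}$ with $\Psi_{\sigma,\sigma}=\id$, which is immediate from the definition of $\Psi$ as the map sending $\Theta^{\sigma_1}_{\mu,k}s^k_{\sigma_1}\otimes\omh$ to $\Theta^{\sigma_2}_{\mu,k}s^k_{\sigma_2}\otimes\omh$. The second is the functoriality of the half-form transport, $U^{B\sigma}_{(A,e)}\circ U^{\sigma}_{(B,f)}=U^{\sigma}_{(A,e)(B,f)}$. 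This reflects that $\Mo_2$ acts on the left on $\Hp\times\mtore^2\times\delta$: the group law $(A,e)(B,f)=(AB,e'')$ with $e''(\tau)=e(B\tau)f(\tau)$ is precisely the cocycle making $(\tau,p,q,z\omh)\mapsto(A\tau,ap+bq,cp+dq,ze(\tau)\omh)$ a left action. Concretely, the scalar factors attached to the two maps multiply as $e(B\sigma)f(\sigma)=e''(\sigma)$, in agreement with the group law, while the geometric parts compose because $\varphi_A\circ\varphi_B=\varphi_{AB}$ and the prequantum and half-form lifts compose.

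With these in hand I would compute $R_2(g)\circ R_2(h)$ directly for $g=(A,e)$ and $h=(B,f)$. Using the preceding Proposition to slide the transition map past the half-form transport, the middle composite becomes
$$\Psi_{\tau,\, A^{-1}\tau}\circ U^{B^{-1}\tau}_{h}=U^{(AB)^{-1}\tau}_{h}\circ\Psi_{B^{-1}\tau,\,(AB)^{-1}\tau},$$
which is that Proposition applied with base points $\sigma_1=B^{-1}\tau$ and $\sigma_2=(AB)^{-1}\tau$. Substituting into $R_2(g)\circ R_2(h)=U^{A^{-1}\tau}_{g}\circ\Psi_{\tau,A^{-1}\tau}\circ U^{B^{-1}\tau}_{h}\circ\Psi_{\tau,B^{-1}\tau}$, then collapsing the two transition maps by the $\Psi$-cocycle and the two transports by functoriality, gives
$$R_2(g)\circ R_2(h)=U^{(AB)^{-1}\tau}_{gh}\circ\Psi_{\tau,\,(AB)^{-1}\tau}=R_2(gh),$$
and $R_2(\id,1)=U^{\tau}_{(\id,1)}\circ\Psi_{\tau,\tau}=\id$. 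The only genuine work, and the main place where care is required, is the bookkeeping of the base points in $\Hp$ — making sure every operator is applied over the correct $\tau$ — together with the verification of the second structural identity, i.e.\ that the family $\{U^\sigma_g\}$ really descends from a left $\Mo_2$-action; the point is that the cocycle twist carried by $e$ is exactly absorbed by the group law of $\Mo_2$, so no phase anomaly survives the composition.
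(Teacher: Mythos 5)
Your proof is correct and takes the same route as the paper: the paper states this corollary as an immediate consequence of the preceding commuting-diagram proposition, and your argument --- the cocycle law $\Psi_{\sigma_2,\sigma_3}\circ\Psi_{\sigma_1,\sigma_2}=\Psi_{\sigma_1,\sigma_3}$, the functoriality of the lifts $\varphi^*_{(A,e)^{-1},\sigma}$ coming from the left $\Mo_2$-action on $\Hp\times\mtore^2\times\delta$, and the base-point bookkeeping --- is precisely the routine verification the paper leaves implicit. Your identification of where the twist $e''(\tau)=e(B\tau)f(\tau)$ gets absorbed matches the paper's group law for $\Mo_2$, so nothing is missing.
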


Consider now the diagonal action of the Weyl group on $\mtore^2$. Its
trivial lift to the half-form bundle acts holomorphically and
preserves the metric. So we have a unitary representation of $W$ on $ H_{\tau}  ^0 (
\mtore ^2, L^k \otimes \delta_{\tau })$. It is easy to see that this
representation commutes with the one of $\Mo_2$. This gives a
representation of $\Mo_2$ on the subspace of
alternating sections that we denote by $R_2^{\alt}$. By theorem
\ref{theo:basis-alternating-sections}, we have

\begin{theo} \label{theo:matrice_R}
The coefficients of the matrix of $R_2^{\alt}
(S,e) $ and $R_2^{\alt} ( T,1)$ in the basis $  \chi_{\mu,k} \otimes \omh$, $\mu \in \Alc \cap k^{-1}
 \Lat^* $ are respectively 
$$  \frac{e(i)^{-1}}{k ^{\frac{n}{2}}  \operatorname{Vol} ( \Liet / \Lat )}   \sum_{w \in W } (-1)^{\ell (w) }\exp(-  2 i \pi k B( \mu, w(\mu') )  ) $$
 and
  $$  \delta_{\mu,\mu'}  \exp ( i \pi k B( \mu, \mu ) ).$$
\end{theo}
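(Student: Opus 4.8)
The plan is to read the matrix off directly from the definition of $R_2$ in corollary \ref{cor:def_R2}, fed by the action on alternating sections already computed in theorem \ref{theo:rep_alternating-sections}. By theorem \ref{theo:basis-alternating-sections}, tensoring with $\omh$, the vectors $\chi^\tau_{\mu,k}s^k_\tau\otimes\omh$ with $\mu\in\Alc\cap k^{-1}\Lat^*$ form a basis of the alternating part of $H^0_\tau(\mtore^2,L^k\otimes\delta_\tau)$, and $R_2^{\alt}(A,e)$ is the restriction to this subspace of $R_2(A,e)=\varphi_{(A,e)^{-1},\tau}^*\circ\Psi_{\tau,A^{-1}\tau}$. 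So it suffices to apply the two factors in turn to each basis vector, for $A=S$ and $A=T$.

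First I would check that $\Psi_{\tau_1,\tau_2}$ carries $\chi^{\tau_1}_{\mu,k}s^k_{\tau_1}\otimes\omh$ to $\chi^{\tau_2}_{\mu,k}s^k_{\tau_2}\otimes\omh$. This is immediate from its definition on the theta basis, where it sends $\Theta^{\tau_1}_{\mu,k}s^k_{\tau_1}\otimes\omh$ to $\Theta^{\tau_2}_{\mu,k}s^k_{\tau_2}\otimes\omh$: since $\chi_{\mu,k}$ is the fixed $W$-alternating combination $\sum_{w}(-1)^{\ell(w)}\Theta_{w(\mu),k}$, linearity gives the claim. Taking $\tau_2=A^{-1}\tau$ transports a basis vector into $H^0_{A^{-1}\tau}$, after which theorem \ref{theo:rep_alternating-sections}, read at the base point $A^{-1}\tau$ so that the target complex structure $A\cdot(A^{-1}\tau)$ is again $\tau$, returns the image in $H^0_\tau$. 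For $T$ this gives at once the diagonal value $\exp(i\pi kB(\mu,\mu))$, and for $S$ the alternating sum $\sum_{w\in W}(-1)^{\ell(w)}\exp(-2i\pi kB(\mu,w(\mu')))$ multiplied by the constant of theorem \ref{theo:modular-action}.

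The only genuine bookkeeping is the prefactor for $S$, and this is the point to be careful about. The constant of theorem \ref{theo:modular-action} depends on the base point through $(S.\tau/i)^{n/2}$, but the half-form lift supplies exactly the factor that cancels this dependence; as in the proof of the proposition preceding corollary \ref{cor:def_R2}, the product is $\tau$-independent, and because $R_2$ is assembled from the pullback by the inverse element $(A,e)^{-1}$ it emerges as $e(i)^{-1}k^{-n/2}\operatorname{Vol}(\Liet/\Lat)^{-1}$. Verifying that the square root $e$ is tracked correctly through this inverse, and that the $\tau$-dependence really drops out, is the whole content of the argument. Assembling the two factors then yields the stated coefficients. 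I would close by noting that the exponential sum is symmetric under $\mu\leftrightarrow\mu'$ — since $W$ acts on $\Liet$ by isometries, $B$ is symmetric, and $(-1)^{\ell(w)}=(-1)^{\ell(w^{-1})}$ — so the result does not depend on the row-versus-column convention for the matrix.
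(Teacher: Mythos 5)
Your route is the same as the paper's: restrict $R_2$ of corollary \ref{cor:def_R2} to the alternating subspace, observe that $\Psi_{\tau,A^{-1}\tau}$ fixes the coordinates in the basis $\chi^\tau_{\mu,k}s^k_\tau\otimes\omh$ (immediate by linearity from its definition on the theta basis), and then read theorem \ref{theo:rep_alternating-sections} at the base point $A^{-1}\tau$ so that the target structure is again $\tau$. The $T$ case and the structure of the $S$ case are handled correctly, and your closing remark on the $\mu\leftrightarrow\mu'$ symmetry of the exponential sum is fine.

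However, there is a genuine gap, and it is exactly where you stop: you assert that the scalar prefactor for $S$ ``emerges as $e(i)^{-1}k^{-n/2}\operatorname{Vol}(\Liet/\Lat)^{-1}$'' and then declare that ``verifying that the square root $e$ is tracked correctly through this inverse \dots is the whole content of the argument'' --- without performing that verification. This cannot be waved through, because the source you cite in its place gives, as literally stated, the \emph{reciprocal} of your constant: the proof of the proposition preceding corollary \ref{cor:def_R2} computes $C' = C\,e(\tau) = e(i)\,k^{-n/2}\operatorname{Vol}(\Liet/\Lat)^{-1}$, with $e(i)$ and not $e(i)^{-1}$. Carrying out the computation, the composite acts at the base point $S^{-1}\tau$, producing the factor $\bigl(S\cdot(S^{-1}\tau)/i\bigr)^{n/2}\,e(S^{-1}\tau) = (\tau/i)^{n/2}\,e(S^{-1}\tau)$; its square is $(\tau/i)^n(1/\tau)^n = i^{-n} = e(i)^2$, so by continuity it is constant, equal to its value $e(i)$ at $\tau = i$. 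Obtaining $e(i)^{-1}$ instead requires that the operator carry the cocycle of the inverse element $(S,e)^{-1} = (S^{-1},\tilde e)$, where the product rule of $\Mo_2$ gives $\tilde e(\tau) = e(S^{-1}\tau)^{-1}$ and hence $\tilde e(i) = e(i)^{-1}$ --- i.e., everything hinges on the direction convention for the pullback $\varphi^*_{(A,e)^{-1},\tau}$, a point on which the paper's own statements (the proposition's $C' = e(i)k^{-n/2}\operatorname{Vol}^{-1}$ versus the theorem's $e(i)^{-1}$) are already in tension. Since your claimed value and the value produced by the argument you invoke differ by precisely this inversion, a proof must track the bundle map fiberwise through $(S,e)^{-1}$ and settle which of $e(i)$, $e(i)^{-1}$ appears; identifying the crux is not the same as resolving it.
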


\section{Algebraic  projective representation}  \label{sec:modul-tens-categ}

 Consider the category of representations of the quantum group $U_q (
 \Lie)$ for $q$ being the root of unity.  This category has a
 subquotient, called the fusion category, which is a modular tensor
 category. Following \cite{Tu}, we can define a natural projective representation of the mapping
 class group of any 2-dimensional surface with marked points on
 appropriate spaces of morphisms in this category. In particular, for
 the torus, we get a projective representation of the modular group that we
 define in this section.

\subsection{The representation $R_{\infty}$}  \label{sec:rep_infty}

Denote by $\rho$ the half sum of positive roots and by $ h^\vee = 1 + \rho (
\al_0 ^\vee)$ the dual Coxeter number. Let $k$ be an integer bigger
than $h^\vee$. Then the set of admissible weights at level $k - h
^{\vee}$ is 
$$ C_k : = \Lat^* \cap ( k - h^\vee ) \overline{\Alc}$$ 
For any $\la, \mu
\in C_k$, let 
$$ \tilde s _{\la \mu} := \Bigl| \frac{ \Lat^* }{k \Lat } \Bigr| ^{- 1/ 2} i
^{|\Delta_+|} \sum_{w \in W } (-1) ^{\ell ( w) } \exp \Bigl( - \frac{2 i
\pi } {k} B( w ( \la + \rho ), \mu + \rho ) \Bigr)$$ 
and 
$$ \tilde t_{\la \mu} := \delta_{\la \mu }  \exp \Bigl( \frac{ i
\pi } {k} B( \la , \la + 2 \rho ) \Bigr). $$

Consider the action of the modular group on the real projective line
induced by the standard action on $\R^2$. Choose a base
point $L_o \in \proj^1( \R)$. Let $\Mo_\infty$
be the set of pairs $(A,\gamma)$
where $A \in \Mo$ and $\ga$ is a homotopy
class (with fixed endpoint) of a path in $\proj^1 ( \R)$ from $A. L_o$
to $L_o$. $\Mo_\infty$ is an extension by $\Z$ of the modular group, the
product being given by 
$$ (A, \ga).( A' , \ga')= ( A A' , A( \ga') \star \ga ) .$$
Assume that $L_o = [1,0]$. Then $\Mo_\infty $ is generated by $\hat{s} = ( S, [\phi])$, $
\hat{t} = (T, [1,0])$ and $\hat \gamma = (\id, [\ga])$ where $\phi$ and $\ga$ are
the paths 
$$ \phi( t) =  [ \sin ( t \frac{\pi}{2} ),\cos (t \frac{\pi}{2})],
\quad \ga( t) = [ \cos (t \pi) , \sin ( t \pi) ] ,$$  
with $0 \leqslant  t \leqslant 1$.
\begin{theo} \label{theo:proj-rep-mod}
$\Mo_\infty$ admits a representation $R_\infty$ on $\C^{C_k}$
determined by 
\begin{gather*} 
  R_\infty (\hat s ) =  (\exp (- i \tfrac{\pi}{4}c ) \tilde s_{\la
  \mu} ) ,  \quad R_\infty (\hat t ) = ( \tilde t_{\la \mu }), \quad 
   R_\infty (\hat \gamma ) =  ( \exp ( i \tfrac{\pi}{2}c)
\delta_{\la \mu} )   .
\end{gather*}
\end{theo}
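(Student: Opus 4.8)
The plan is to show that the assignment $R_\infty(\hat{s})$, $R_\infty(\hat{t})$, $R_\infty(\hat\gamma)$ respects the defining relations of $\Mo_\infty$. Since $\Mo_\infty$ is an extension of the modular group $\Mo = \Sl(2,\Z)$ by $\Z$, and since $\Mo$ admits the presentation with generators $S, T$ subject to $S^4 = 1$ and $(ST)^3 = S^2$ (equivalently $(ST)^6 = 1$ with $S^2$ central), the group $\Mo_\infty$ is obtained by lifting these relations to the universal-cover-type extension. Concretely, $\hat{\gamma}$ is central and generates the $\Z$, and the relations become $\hat{s}^4 = \hat\gamma^{m_1}$, $(\hat{s}\hat{t})^3 = \hat{s}^2 \hat\gamma^{m_2}$ for explicit integers $m_1, m_2$ read off from the winding of the corresponding paths in $\proj^1(\R)$. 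So first I would pin down these exponents by computing how the paths $\phi$ and $\ga$ compose under the rule $(A,\ga).(A',\ga') = (AA', A(\ga')\star\ga)$, tracking the total winding number in $\proj^1(\R) \simeq S^1$.

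\textbf{Checking the matrix relations.}

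The second and main step is to verify that the proposed matrices satisfy exactly these lifted relations. Writing $\tilde{S} = (\exp(-i\tfrac{\pi}{4}c)\tilde{s}_{\la\mu})$, $\tilde{T} = (\tilde{t}_{\la\mu})$, $\tilde\Gamma = (\exp(i\tfrac{\pi}{2}c)\delta_{\la\mu})$, I would check that $\tilde\Gamma$ is scalar (hence central), and then establish the standard $S$--$T$ relations for the \emph{unnormalized} modular data $\tilde{s}, \tilde{t}$ of the fusion category. The key identities are $(\tilde{s}\tilde{t})^3 = \kappa \tilde{s}^2$ and $\tilde{s}^2 = \hat{C}$ (the charge conjugation matrix), together with $\tilde{s}^4 = 1$, where $\kappa$ is the anomaly root of unity; these are precisely the Verlinde/modular-tensor-category relations, and I would either invoke them as known properties of the fusion category of $U_q(\Lie)$ or derive them from the explicit Weyl-character formulas for $\tilde{s}_{\la\mu}$ and $\tilde{t}_{\la\mu}$ via the discrete Fourier/Gauss-sum structure of the $\sum_{w\in W}(-1)^{\ell(w)}$ expression. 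The phase prefactors $\exp(-i\tfrac{\pi}{4}c)$ and $\exp(i\tfrac{\pi}{2}c)$ are designed so that the anomaly $\kappa$ in $(\tilde{s}\tilde{t})^3 = \kappa\tilde{s}^2$ is absorbed exactly into the $\hat\gamma$-powers dictated by the path winding, making the representation honest (non-projective) on $\Mo_\infty$.

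\textbf{Matching the anomaly to the central charge.}

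The crux is the bookkeeping that reconciles the analytic anomaly of the matrices with the topological winding exponents from step one. Concretely, I expect $(\tilde{s}\tilde{t})^3 = e^{2\pi i c/8}\,\tilde{s}^2$ for the central charge $c$ appearing in the prefactors, so that once the $\hat{\gamma}$-phases are inserted the relation $(\hat{s}\hat{t})^3 = \hat{s}^2\hat\gamma^{m}$ is satisfied on the nose with the $m$ computed from the winding of the composite path; similarly $\hat{s}^4 = \hat\gamma^{m'}$ must match $\tilde{S}^4 = e^{-i\pi c}\tilde{s}^4 = e^{-i\pi c}$ against $\tilde\Gamma^{m'}$. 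The main obstacle is getting every sign, every power of $i$ (note $i^{|\Delta_+|}$ in $\tilde{s}_{\la\mu}$), and every winding exponent consistent simultaneously, since an error in the orientation conventions for the paths $\phi,\ga$ or in the normalization of $B$ on the shifted weights $\la+\rho$ would shift the anomaly by a root of unity and break one relation. I would therefore verify the scalar relation on the distinguished vector indexed by the trivial weight (where the character values and Gauss sums are most explicit) to fix the overall constant, and then promote it to the full matrix relation using the known invertibility and the Verlinde structure. Once all three relations hold, the universal property of the presentation of $\Mo_\infty$ furnishes the representation $R_\infty$.
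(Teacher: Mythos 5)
Your plan is viable and, with the cited identities in hand, the phase bookkeeping does close up: with the presentation $(\hat{\gamma}\hat{s}^2)^2 = \id$, $(\hat{s}\hat{t})^3 = \hat{s}^2$, $\hat{\gamma}$ and $\hat{s}^2$ central (which the paper takes from Bakalov--Kirillov, example 5.7.7, and which your winding-number computation would reproduce, giving $\hat{s}^4 = \hat{\gamma}^{-2}$), and with the Kac--Peterson relations $\tilde{s}^2 = C$, $C^2 = \id$, $(\tilde{s}\tilde{t})^3 = e^{2i\pi c/8}\,\tilde{s}^2$, one checks $(\tilde{S}\tilde{T})^3 = e^{-3i\pi c/4}e^{i\pi c/4}\tilde{s}^2 = \tilde{S}^2$ and $(\tilde{\Gamma}\tilde{S}^2)^2 = C^2 = \id$, exactly as you predict. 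But this is a genuinely different route from the paper's, and it is worth seeing what each buys. The paper never verifies any relation among the matrices: it already possesses a genuine unitary representation $R_2^{\alt}$ of the extension $\Mo_2$, constructed geometrically (corollary \ref{cor:def_R2}, where the commutation of the diagram makes the group law automatic), extends it to $R_4$ on $\Mo_4$, pulls back along an explicitly constructed morphism $\Psi : \Mo_\infty \rightarrow \Mo_4$, twists by the character $\zeta_k$, and then simply \emph{computes} the matrices of $\hat{s}$, $\hat{t}$, $\hat{\gamma}$ in the basis $\chi_{(\la+\rho)/k,\,k}\otimes\omh$ using theorem \ref{theo:matrice_R}, the bijection $C_k \simeq \Alc \cap k^{-1}\Lat^*$, and Freudenthal's strange formula; since the construction is a representation by fiat, matching the matrices proves the theorem with no relation-checking at all. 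Your approach instead consumes as input precisely the hard analytic fact --- the Gauss-sum anomaly $(\tilde{s}\tilde{t})^3 = e^{2i\pi c/8}\tilde{s}^2$ with the categorical anomaly identified with the WZW central charge $c = (k - h^\vee)\dim G / k$ --- which is a nontrivial reciprocity computation if derived by hand, whereas the paper's method produces that identification as a byproduct (this is where Freudenthal's formula $B(\rho,\rho)/2h^\vee = \dim G/24$ enters on the geometric side). So your proof is correct as a proof-by-citation of the modular-data identities, and is more self-contained on the group-theoretic side (you actually exhibit the presentation of $\Mo_\infty$ rather than citing it), but be aware that if you chose the ``derive the relations from the Weyl sums'' branch rather than citing Kac--Peterson, you would be redoing the substantive work that the paper's geometric detour through $R_2^{\alt}$, $\Psi$ and $\zeta_k$ is designed to avoid.
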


The reader is referred to the book of
Bakalov-Kirillov \cite{BaKi} for a detailed exposition on modular tensor
category and the fusion category of $U_q ( \Lie)$.  The formulas for
$s_{\la \mu } $ and $t_{\la \mu }$ are given in theorem 3.3.20, that
they give a projective representation is the content of chapter 3.1,
cf. remark 3.1.9. The definition of the central extension is in
chapter 5.7, the case of the torus being treated in example 5.7.7.
Note also there is a misprint in formula for $s_{\la \mu }$, compare with
proposition 3.8 of \cite{Ki}. 

\subsection{Comparison of the representations $R_2^{\alt}$ and $R_\infty$}

To compare $R_2^{\alt}$ with $R_\infty$, we introduce a third extension
$\Mo_4$ of the modular group. By definition, $\Mo_4$ consists of the
pairs  $(A, e)$ where $A \in \Mo$
and $ e$ is a continuous function from $\Hp$ to $\C$ satisfying
$e(\tau)^4 =
( - c\tau + d)^{2n}$, with $a$, $b$, $c$, $d$ the coefficients of
$A$. The product is given by the same formula as for $\Mo_2$. Let
$\Z_4 = \{ \pm 1 , \pm i \}$. The
morphism
$$ \Mo_2 \times \Z_4 \rightarrow \Mo_4, \qquad (A,e , u) \rightarrow
(A, eu ) $$
is onto with kernel $\{ ( \id, 1, 1), ( \id, -1 , -1 ) \} $. Thus we
have a representation $R_4$ of $\Mo_4$ given by 
$$ R_4 ( A,eu) = u R_2^{\alt} (A,e), \qquad (A,e) \in \Mo_2 , \; u \in \Z_4 .$$  
Recall that $\Mo_{\infty}$ is generated by $\hat s $, $ \hat t $ and $
\hat \ga $.
\begin{lem} 
There exists a morphism $\Psi$ from $\Mo_\infty$ onto $\Mo_4$ such that
\begin{gather*} 
 \Psi ( \hat s ) = (S,e) \quad \text{ with } \quad e( i ) = e^{-i
  \frac{\pi}{4} n} \\
\Psi ( \hat \ga ) = ( \id , i ^{\dim G} ) ,\qquad \Psi ( \hat t )  = (
T, 1).
\end{gather*}
\end{lem}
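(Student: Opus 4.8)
The plan is to present $\Mo_\infty$ by generators and relations and then to define $\Psi$ on the three generators $\hat s,\hat t,\hat\gamma$, so that the whole matter reduces to checking that these images satisfy the defining relations of $\Mo_\infty$. First I would record a presentation. Since $\hat\gamma=(\id,[\gamma])$ lies in the kernel of $\Mo_\infty\to\Mo$, and that kernel is $\pi_1(\proj^1(\R))\cong\Z$ generated by the single loop $\gamma$, the group $\Mo_\infty$ is a central extension of $\Mo$ by $\Z\langle\hat\gamma\rangle$; lifting the standard presentation $\Mo=\langle S,T\mid S^4=1,\ (ST)^3=S^2\rangle$ therefore gives
$$\Mo_\infty=\langle \hat s,\hat t,\hat\gamma\mid \hat\gamma\ \text{central},\ \hat s^4=\hat\gamma^{\,m},\ (\hat s\hat t)^3=\hat s^2\hat\gamma^{\,m'}\rangle$$
for integers $m,m'$. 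To determine them I would compute net angles in $\proj^1(\R)\cong\R/\pi\Z$, using the group law $(A,\delta)(A',\delta')=(AA',A(\delta')\star\delta)$ to concatenate certificate paths: one checks that the certificate of $\hat s^4$ is a loop of winding $-2$, while the certificates of $(\hat s\hat t)^3$ and of $\hat s^2$ are both loops at $L_o$ of winding $-1$, hence homotopic. Thus $m=-2$ and $m'=0$.

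Next I would define $\Psi$ by the stated values and verify everything. The element $\Psi(\hat s)=(S,e)$ makes sense in $\Mo_4$: for $S$ one has $-c\tau+d=-\tau$, and on the simply connected $\Hp$ there is a unique continuous $e$ with $e^4=(-\tau)^{2n}$ and $e(i)=e^{-i\pi n/4}$, namely the branch with $e^2=(-\tau)^n$; similarly $(T,1)$ and $(\id,i^{\dim G})$ lie in $\Mo_4$. Centrality of $\Psi(\hat\gamma)$ is automatic, as it lies in the central kernel $\Z_4$ of $\Mo_4$. The two remaining relations both involve products projecting to $\pm I$, where the automorphy factor $-c\tau+d=\mp1$ is constant, so the resulting $e$-functions are constant fourth roots of unity and may be read off at a single point. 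Using $e^2=(-\sigma)^n$ at the relevant arguments $\sigma$, one finds $\Psi(\hat s)^2=(-I,(-i)^n)$ and $\Psi(\hat s)^4=(\id,(-1)^n)$; since $(i^{\dim G})^{-2}=(-1)^{\dim G}=(-1)^n$ because $\dim G\equiv n \bmod 2$, the relation $\hat s^4=\hat\gamma^{-2}$ is respected.

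The relation $(\hat s\hat t)^3=\hat s^2$ is the delicate one, and I expect the phase computation it requires to be the main obstacle. Here $\Psi(\hat s)\Psi(\hat t)=(ST,\,\tau\mapsto e(T\tau))$, and cubing it one must evaluate the constant fourth root of unity obtained by running $e$ around the $ST$-orbit of a base point, e.g. the fixed point $\tau_0=e^{i\pi/3}$ of $ST$ or simply $\tau=i$, and check that it equals $(-i)^n$. The difficulty is that this forces a product of three principal-branch powers $(-\sigma)^{n/2}$ at $\tau$, $ST\tau$ and $(ST)^2\tau$, which differs from the naive product by a power of $e^{i\pi n}$ dictated by how $\arg(-c\tau+d)$ winds along the orbit. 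Controlling that winding is exactly the same bookkeeping as the net-angle computation used above for the presentation: $\proj^1(\R)$ is the boundary of $\Hp$, and both extensions encode the same Maslov phase cocycle. The substance of the lemma is precisely that these two windings match under the assignment $\hat\gamma\mapsto i^{\dim G}$.

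Once the three relations are verified, the universal property of the presentation shows that $\Psi$ extends to a well-defined homomorphism $\Mo_\infty\to\Mo_4$. Finally $\Psi$ is onto: it covers the identity of $\Mo$, which is surjective, and it carries the kernel $\langle\hat\gamma\rangle$ onto $\langle i^{\dim G}\rangle$, which is all of $\Z_4$ (this last point is where the parity of $\dim G$ enters). Hence $\Psi$ is the required surjective morphism.
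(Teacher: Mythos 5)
Your route is genuinely different from the paper's. The paper never checks any relation: it passes to the groups $\Mo_\infty^\R$, $\Mo_4^\R$ over $\Sl(2,\R)$, uses the deformation retraction of $\Sl(2,\R)$ onto the circle subgroup $j(\U)$ to reduce morphisms covering the identity to morphisms $j^*\Mo_\infty^\R \to j^*\Mo_4^\R$ over $\U$, and there writes the map down explicitly as $(u,\theta)\mapsto \bigl(u, u^{|\Delta_+|}e^{-i\theta \dim G/4}\bigr)$, evaluating on $\hat s\simeq (i,\pi)$ and $\hat\gamma\simeq (1,-2\pi)$. This buys freedom from generators and relations entirely. Your presentation-based strategy is legitimate, and your winding computations giving $\hat s^4=\hat\gamma^{-2}$ and $(\hat s\hat t)^3=\hat s^2$ agree with the relations the paper itself quotes from Bakalov--Kirillov (example 5.7.7) in the very next lemma; your verification of $\Psi(\hat s)^4=\Psi(\hat\gamma)^{-2}$ via $\dim G\equiv n \bmod 2$ is also correct.

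The genuine gap is that you never carry out the one verification you yourself flag as ``the main obstacle'': that $\bigl(\Psi(\hat s)\Psi(\hat t)\bigr)^3=\Psi(\hat s)^2$ in $\Mo_4$. You describe what must be checked and then declare that ``the substance of the lemma is precisely that these two windings match'' --- which is to assume the conclusion. Moreover, the difficulty you anticipate (branch bookkeeping across three principal powers) does not arise: $e$ is a single continuous function on the simply connected $\Hp$, so no branch matching is needed, and the check takes a few lines. With the paper's conventions one has $T\tau=\tau-1$ and $ST\colon\tau\mapsto 1/(1-\tau)$, so the $e$-component of $\bigl(\Psi(\hat s)\Psi(\hat t)\bigr)^3$ at $\tau$ is $e(-1/\tau)\,e\bigl(\tau/(1-\tau)\bigr)\,e(\tau-1)$; its square is $(1/\tau)^n\cdot\bigl(-\tau/(1-\tau)\bigr)^n\cdot(1-\tau)^n=(-1)^n$, a constant, so the function is constant and may be evaluated at $\tau=i$, where it equals $e^{-i\pi n/4}\cdot 2^{-n/4}e^{-i\pi n/8}\cdot 2^{n/4}e^{-i\pi n/8}=e^{-i\pi n/2}=(-i)^n$, matching $\Psi(\hat s)^2=(S^2,(-i)^n)$. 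Without this computation your proof is incomplete at its central point. A secondary flaw: your surjectivity argument needs $i^{\dim G}$ to generate $\Z_4$, which holds only when $\dim G$ is odd (equivalently $n$ odd); since $\Psi$ covers the identity of $\Mo$, the image meets the central $\Z_4$ exactly in $\langle i^{\dim G}\rangle$, so for even rank $\Psi$ is not onto. This defect is inherited from the statement itself --- the paper's proof also never addresses surjectivity, which is in any case not used in the sequel --- but your parenthetical presents the parity of $\dim G$ as making the argument work, when in fact it is exactly what can break it.
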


\begin{proof} 
Consider the groups $\Mo_\infty^\R$ and $\Mo_4^\R$ defined
exactly as $\Mo_{\infty}$ and $\Mo_4$ except that we replace the
modular group by $\Mo^\R:=\Sl ( 2, \R)$. We will construct a morphism from
$\Mo_\infty^\R$ onto $\Mo_4^\R$ whose restriction to
$\Mo_\infty$ is $\Psi$. Let $j $ be the morphism from the unit circle
$\U \subset \C$ into $\Mo^\R$  
$$j ( u ) = \left( \begin{array}{cc} \operatorname{reel} u  & -\operatorname{im} u \\ \operatorname{im} u & \operatorname{reel} u
\end{array} \right) 
$$
Since $\Mo^\R$ admits a deformation retract onto $j ( \U)$, one
has a bijective correspondence between the morphisms from  $\Mo_\infty^\R$ to $\Mo_4^\R$
covering the identity of $\Mo^\R$ and the morphisms from $j^*
\Mo_\infty^\R$ to $j^* \Mo_4^\R$ covering the identity of
$\U$.  On one hand 
$$ j^* \Mo_4^\R \simeq \{ ( u, e) \in \U \times \C^* / \; e^4 = u
^{-2n} \} =: \U^4$$
because for $\tau = i $, $-c \tau + d = u^{-1}$ if $a$, $b$, $c$, and
$d$ are the coefficients of $ j (u)$. 
On the other hand
$$ j^* \Mo_\infty^\R \simeq \{ ( u, \theta) /  \; \exp( i \theta
) = u^2 \} =: \U^\infty $$
where we send $( u, \theta)$ into $( j(u),  [\ga])$ with $\ga$ the path
$$  \ga (t) =  [ \cos ( \tfrac{1-t}{2} \theta), \sin (
\tfrac{1-t}{2} \theta  ) ] ,  \qquad t \in [0,1]. $$ 
In particular $ \hat s $ and $ \hat \gamma $ are identified with $(
i , \pi)$ and $( 1 , -2\pi)$. The morphism we are looking for is  
$$ \U^\infty \rightarrow \U^4, \qquad ( u , \theta) \rightarrow ( u ,
u ^{|\Delta_+|} e^{-i \frac{\theta}{4} \dim G} )$$
Here $|\Delta_+|$ is the number of positive roots of $G$. The images of
$\hat s $ and $\hat \gamma $ are given by a straightforward
computation using that $\dim G =
2 |\Delta_+ | + n$.   
\end{proof}

\begin{lem} For any $k$, there is a morphism $\zeta_k : \Mo_\infty \rightarrow \C^*$
  such that
$$ \zeta_k ( \hat s ) = e^{i \frac{\pi}{4} x_k}, \quad  \zeta_k ( \hat
t ) = e^{-i \frac{\pi}{12} x_k}, \quad \zeta_k ( \hat \ga ) = e^{ -
  i \frac{\pi}{2} x_k}
$$
with $x_k = h^\vee \dim G / k $ where $h^\vee$ is the dual Coxeter
number of $G$.  

\end{lem}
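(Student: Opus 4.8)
Since the target group $\C^*$ is abelian, any homomorphism out of $\Mo_\infty$ factors through its abelianization; equivalently, to construct $\zeta_k$ it is enough to fix a presentation of $\Mo_\infty$ on the generators $\hat{s},\hat{t},\hat{\gamma}$ and to verify that the prescribed images send every relator to $1$. The plan is thus in two parts: first write such a presentation, which is where all the work lies, and then carry out the (immediate) check.

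For the presentation, recall that $\Mo=\Sl(2,\Z)=\langle S,T\mid S^4=\id,\ (ST)^3=S^2\rangle$, with $S^2=-\id$ central. Since $\Mo_\infty$ is a central extension of $\Mo$ by the infinite cyclic group $\langle\hat{\gamma}\rangle$ and $\hat{s},\hat{t}$ lift $S,T$, it has a presentation
\[
\Mo_\infty=\langle\,\hat{s},\hat{t},\hat{\gamma}\ \mid\ \hat{\gamma}\ \text{central},\ \hat{s}^4=\hat{\gamma}^{\,m},\ (\hat{s}\hat{t})^3=\hat{s}^2\hat{\gamma}^{\,n}\,\rangle
\]
for two integers $m,n$ recording how the two relators of $\Mo$ lift. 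I would determine them by realizing $\Mo_\infty$ through lifts of circle homeomorphisms: writing $\proj^1(\R)=\R/\pi\Z$ via $\theta\mapsto[\cos\theta:\sin\theta]$ with universal cover $\R$, the law $(A,\gamma)(A',\gamma')=(AA',A(\gamma')\star\gamma)$ becomes composition of lifts, the kernel $\langle\hat{\gamma}\rangle$ becomes the deck translations by $\pi\Z$, and a bookkeeping of orientations identifies $\hat{\gamma}$ with translation by $-\pi$. As $S$ acts by $\theta\mapsto\theta+\tfrac{\pi}{2}$, the lift $\hat{s}$ is $\Theta\mapsto\Theta+\tfrac{\pi}{2}$, so $\hat{s}^4$ is translation by $2\pi$, i.e. $\hat{s}^4=\hat{\gamma}^{-2}$ and $m=-2$. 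For the second relator, $T$ fixes $\theta=0$, and iterating $\tilde{S}\tilde{T}\colon\Theta\mapsto\tilde{T}(\Theta)+\tfrac{\pi}{2}$ from $0$ gives $0\mapsto\tfrac{\pi}{2}\mapsto\tfrac{3\pi}{4}\mapsto\pi$; since $(ST)^3=-\id$ acts trivially, $(\hat{s}\hat{t})^3$ is translation by $\pi$, which is exactly $\hat{s}^2$, so $n=0$.

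With the presentation at hand, write $\alpha=e^{i\pi x_k/4}$, $\beta=e^{-i\pi x_k/12}$, $\delta=e^{-i\pi x_k/2}$ for the prescribed images of $\hat{s},\hat{t},\hat{\gamma}$. Centrality is automatic in $\C^*$, and for the two remaining relators
\[
\alpha^{4}\delta^{2}=e^{i\pi x_k}\,e^{-i\pi x_k}=1,\qquad (\alpha\beta)^{3}\alpha^{-2}=e^{i\pi x_k/2}\,e^{-i\pi x_k/2}=1,
\]
where I used $\alpha\beta=e^{i\pi x_k/6}$. Hence $\hat{s}^{4}\hat{\gamma}^{2}$ and $(\hat{s}\hat{t})^{3}\hat{s}^{-2}$ are both sent to $1$, so the assignment extends to a homomorphism $\zeta_k\colon\Mo_\infty\to\C^*$ with the required values.

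The one genuine difficulty is the winding computation fixing $m=-2$ and $n=0$: the final verification is sensitive to these integers and to the orientation conventions on $\proj^1(\R)$, and indeed $m=-2$, $n=0$ are precisely the values for which the three prescribed phases are mutually consistent. Once they are pinned down correctly, the remainder is a one-line check.
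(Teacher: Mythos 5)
Your proof is correct, and it ends the same way the paper does --- exhibit a presentation of $\Mo_\infty$ on $\hat s$, $\hat t$, $\hat\gamma$ and check that the prescribed phases kill the relators --- but the key input is obtained by a genuinely different route. The paper does not prove the presentation: it quotes from Bakalov--Kirillov (example 5.7.7) the relations $(\hat\gamma\hat s^2)^2=\id$, $(\hat s\hat t)^3=\hat s^2$, with $\hat\gamma$ and $\hat s^2$ central, and leaves the relator check implicit. Your relations are equivalent to these: given centrality of $\hat\gamma$, the relation $(\hat\gamma\hat s^2)^2=\id$ is exactly your $\hat s^4=\hat\gamma^{-2}$, and centrality of $\hat s^2$ is automatic from your presentation, since $\hat s^2=(\hat s\hat t)^3$ commutes with $\hat s\hat t$ and with $\hat s$, hence with $\hat t$. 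Your derivation of the presentation is sound: the central-extension principle is standard (centrality of $\langle\hat\gamma\rangle$, which you assert rather than prove, does hold, because $\pi_1(\proj^1(\R))$ is abelian and every $A$ acts on $\proj^1(\R)$ by a map homotopic to the identity), and your winding computation is right --- with the paper's explicit paths $\phi$ and $\ga$, the lift of $\ga$ ending at $0$ starts at $-\pi$ and $S$ rotates $\theta$ by $+\pi/2$, confirming $m=-2$, $n=0$, in agreement with the cited relations. One precision is worth adding: the map from $\Mo_\infty$ to lifts of circle homeomorphisms is two-to-one, not faithful, since $-\id$ acts trivially on $\proj^1(\R)$ (the element $(-\id,\text{constant loop})$ goes to the identity lift); your deductions survive because $\hat s^4\hat\gamma^2$ and $(\hat s\hat t)^3\hat s^{-2}$ project to $\id$ in $\Mo$, hence lie in $\langle\hat\gamma\rangle$, on which the identification with deck translations by $\pi\Z$ is injective. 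Also, your closing remark that $m=-2$, $n=0$ are ``precisely the values for which the phases are consistent'' should be dropped or downgraded: as the paper itself observes, the value of $x_k$ is immaterial --- the relator check passes for every real $x$ because these relators die in the abelianization of $\Mo_\infty$ --- and since $x_k$ is rational, other pairs $(m,n)$ could pass the check for a fixed $k$; consistency of the phases is a sanity check, not evidence for the presentation. What your route buys is self-containedness where the paper relies on a citation; what the paper's buys is a three-line proof.
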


\begin{proof} The value of $x_k$ does not matter for the proof. We
  only use that $\Mo_\infty$ is generated by $\hat{s}$, $
  \hat{t} $ and $\hat{\ga}$ with the relations 
$$ \bigl( \hat{\gamma} \hat{s}^2 \bigr) ^2 = \id , \quad \bigl( \hat{s} \hat{t} \bigr) ^3 =
\hat{s}^2 , \quad \hat{\gamma} \text{ and } \hat{s}^2 \text{ are central}$$  
as it is asserted in \cite{BaKi}, example 5.7.7.
\end{proof}

So by the previous lemmas, we define a representation of $\Mo_\infty$
by $$ \tilde{R}_\infty (A,\ga)  =  \zeta_k (A,\ga) . R_4 ( \Psi ( A, \ga ))$$
We will prove it is the same as $R_\infty$. To do this we have to
identify $\C^{C_k}$ with  $ H_{\tau}  ^0 (
\mtore ^2, L^k \otimes \delta_{\tau })$. We need the
following well-known fact. 
\begin{lem}
For any $k$, the map sending $\la$ into $\frac{\la + \rho }{k}$ is a
bijection between $C_k$ and $\Alc \cap k^{-1}
 \Lat^* $
\end{lem}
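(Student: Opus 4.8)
The plan is to exhibit the inverse map explicitly and to check that both $\la \mapsto (\la+\rho)/k$ and its candidate inverse $\nu \mapsto k\nu - \rho$ respect the defining conditions of the two sets; injectivity will then be automatic, the map being affine. First I would unwind the definitions so that everything is phrased through the pairings $\al(\cdot)$ and $\cdot(\al^\vee)$ fixed in Section \ref{sec:notation}. Recall that $\Lat^*$ is the weight lattice, so $\mu \in \Lat^*$ is equivalent to $\mu(\al_i^\vee) \in \Z$ for all simple coroots, and then $\mu(\be^\vee) \in \Z$ for every coroot $\be^\vee$. Since $\al_0$ is a long root, the basic inner product identifies $\al_0$ with $\al_0^\vee$, so the affine coordinate of the alcove satisfies $\al_0(\la) = \la(\al_0^\vee)$; hence $C_k = \Lat^* \cap (k-h^\vee)\overline{\Alc}$ is exactly the set of dominant weights $\la$ with $\la(\al_0^\vee) \le k - h^\vee$. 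The only facts about $\rho$ I would invoke are $\rho \in \Lat^*$, $\rho(\al_i^\vee) = 1$ for each simple coroot, and $\rho(\al_0^\vee) = h^\vee - 1$, the last being precisely the definition $h^\vee = 1 + \rho(\al_0^\vee)$ recalled in the text.

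For well-definedness I would take $\la \in C_k$ and set $\nu = (\la+\rho)/k$. Then $k\nu = \la + \rho \in \Lat^*$, so $\nu \in k^{-1}\Lat^*$. For each simple coroot, $(\la+\rho)(\al_i^\vee) = \la(\al_i^\vee) + 1 \ge 1 > 0$, so $\la + \rho$, and hence $\nu$, lies in the open chamber $\Liet_+$; and $\al_0(\la+\rho) = \la(\al_0^\vee) + (h^\vee - 1) \le (k-h^\vee) + (h^\vee-1) = k-1$, whence $\al_0(\nu) \le (k-1)/k < 1$. Thus $\nu \in \Alc \cap k^{-1}\Lat^*$.

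For the reverse direction I would take $\nu \in \Alc \cap k^{-1}\Lat^*$ and set $\la = k\nu - \rho \in \Lat^*$. Here the decisive input is integrality: since $k\nu \in \Lat^*$, each $k\nu(\al_i^\vee)$ and $k\nu(\al_0^\vee)$ is an integer, so the strict alcove inequalities $\nu(\al_i^\vee) > 0$ and $\al_0(\nu) < 1$ force $k\nu(\al_i^\vee) \ge 1$ and $\al_0(k\nu) \le k-1$. Subtracting the $\rho$-contributions then yields $\la(\al_i^\vee) \ge 0$ and $\la(\al_0^\vee) \le k-h^\vee$, i.e. $\la \in C_k$. The two constructions are visibly inverse to one another, and injectivity follows at once.

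The only genuinely delicate point is this reconciliation of strict versus non-strict inequalities: the shift by $\rho$ must turn the \emph{open} alcove conditions at scale $k$ into the \emph{closed} alcove conditions at level $k-h^\vee$, and this succeeds only because integrality of the coroot pairings on $\Lat^*$ upgrades each strict inequality to the adjacent integer bound, while the identity $\rho(\al_0^\vee) = h^\vee - 1$ makes the numerology match exactly. I expect this to be the main, indeed the only, obstacle; everything else is bookkeeping with the identifications already fixed.
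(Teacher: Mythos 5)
Your proof is correct. There is nothing in the paper to compare it against: the lemma is stated there as a ``well-known fact'' with no proof given, and your $\rho$-shift argument is the standard one. The points you isolate are exactly the ones that matter: $\rho \in \Lat^*$ with $\rho(\al_i^\vee) = 1$ and $\rho(\al_0^\vee) = h^\vee - 1$; the identification $\al_0 = \al_0^\vee$ under $B$, valid because $\al_0$ is long and $B(\al_0^\vee, \al_0^\vee) = 2$; and the integrality of the coroot pairings on $\Lat^*$ (available since $G$ is simply connected, so $\Lat$ is the coroot lattice), which is precisely what upgrades the strict open-alcove inequalities on $k\nu$ to the closed conditions at level $k - h^\vee$ and back.
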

The identification is then defined by sending the canonical basis  of
$\C^{C_k}$ into the basis $  \chi_{\mu,k} \otimes \omh$, $\mu \in \Alc \cap k^{-1}
 \Lat^* $.

\begin{theo} 
The matrices of $\tilde{R}_\infty (\hat s ) $,  $\tilde R_\infty (
\hat t )$, $\tilde R_\infty (\hat \ga )$ in the basis $\chi_{ (\la +
    \rho)/k  ,k} \otimes \omh$, $\la \in
C_k$ are respectively given by 
\begin{gather*} 
  (\exp (- i \tfrac{\pi}{4}c ) \tilde s_{\la
  \mu} ) ,  \qquad  ( \tilde t_{\la \mu }) , \qquad 
    ( \exp ( i \tfrac{\pi}{2}c)
\delta_{\la \mu} )   .
\end{gather*}
where $(s_{\la \mu })$ and $(t_{\la  \mu})$ are the matrices
defined in section \ref{sec:modul-tens-categ}. 
\end{theo}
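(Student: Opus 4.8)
The plan is to verify the three matrix identities separately by computing $\tilde{R}_\infty$ on each generator from its definition $\tilde{R}_\infty(A,\ga) = \zeta_k(A,\ga)\, R_4(\Psi(A,\ga))$ and matching against the target formulas for $\tilde{s}_{\la\mu}$, $\tilde{t}_{\la\mu}$, and the $\hat\ga$-matrix. First I would recall that under the bijection $\la \mapsto (\la+\rho)/k$ of the previous lemma, setting $\mu_\la := (\la+\rho)/k \in \Alc \cap k^{-1}\Lat^*$, the quantities $B(\mu_\la,\mu_\la)$ and $B(\mu_\la, w(\mu_{\la'}))$ translate directly into $\frac{1}{k^2}B(\la+\rho,\la+\rho)$ and $\frac{1}{k^2}B(\la+\rho, w(\mu'+\rho))$. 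This converts the exponents appearing in Theorem~\ref{theo:matrice_R} into the exponents appearing in the formulas for $\tilde{t}_{\la\mu}$ and $\tilde{s}_{\la\mu}$, up to the overall $x_k$-correction carried by $\zeta_k$.

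For $\hat{t}$, I would combine $\Psi(\hat t) = (T,1)$, so $R_4(\Psi(\hat t)) = R_2^{\alt}(T,1)$, whose matrix is $\delta_{\mu\mu'}\exp(i\pi k B(\mu,\mu))$ by Theorem~\ref{theo:matrice_R}. Substituting $\mu = (\la+\rho)/k$ gives $\exp(\frac{i\pi}{k}B(\la+\rho,\la+\rho))$, and multiplying by $\zeta_k(\hat t) = \exp(-\frac{i\pi}{12}x_k)$ should, after using the strange formula $B(\rho,\rho) = \frac{h^\vee \dim G}{12}$ (equivalently $\frac{1}{k}B(\la+\rho,\la+\rho) - \frac{1}{12k}h^\vee\dim G = \frac1k B(\la,\la+2\rho)$), reproduce $\tilde{t}_{\la\mu} = \delta_{\la\mu}\exp(\frac{i\pi}{k}B(\la,\la+2\rho))$. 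For $\hat\ga$, I would use $\Psi(\hat\ga) = (\id, i^{\dim G})$, so $R_4(\Psi(\hat\ga)) = i^{\dim G} R_2^{\alt}(\id,1) = i^{\dim G}\id$; combined with $\zeta_k(\hat\ga) = \exp(-\frac{i\pi}{2}x_k)$ this gives a scalar that I must check equals $\exp(i\frac{\pi}{2}c)$ for the appropriate central charge $c$, again invoking $x_k = h^\vee\dim G/k$ and the definition of $c$ through the strange formula.

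For $\hat{s}$, I would use $\Psi(\hat s) = (S,e)$ with $e(i) = e^{-i\frac{\pi}{4}n}$, so $R_4(\Psi(\hat s)) = R_2^{\alt}(S,e)$ with matrix $\frac{e(i)^{-1}}{k^{n/2}\operatorname{Vol}(\Liet/\Lat)}\sum_{w}(-1)^{\ell(w)}\exp(-2i\pi k B(\mu, w(\mu')))$. Substituting $\mu=(\la+\rho)/k$, $\mu'=(\mu+\rho)/k$ turns the exponent into $-\frac{2i\pi}{k}B(\la+\rho, w(\mu+\rho))$, matching the sum in $\tilde{s}_{\la\mu}$; the prefactor $e(i)^{-1} = e^{i\frac{\pi}{4}n}$ together with $\zeta_k(\hat s) = e^{i\frac{\pi}{4}x_k}$ and the target factor $e^{-i\frac{\pi}{4}c}$ must then reconcile the phases, while the normalisation $k^{-n/2}\operatorname{Vol}(\Liet/\Lat)^{-1}$ must be shown to equal $i^{|\Delta_+|}|\Lat^*/k\Lat|^{-1/2}$ — this last volume/determinant comparison is where I expect the genuine arithmetic to lie. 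The main obstacle is bookkeeping the phase constants: reconciling the powers of $i$ (namely $i^{|\Delta_+|}$, the $e^{-i\pi c/4}$ factors, $e(i)^{\pm 1}$, and the contributions of $\zeta_k$) requires pinning down the normalisation of the central charge $c$ and carefully tracking the metaplectic index, and I would handle it by verifying each generator's scalar phase is consistent modulo the known relations in $\Mo_\infty$ rather than computing $c$ from scratch.
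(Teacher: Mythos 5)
Your overall route is exactly the paper's: feed $\Psi(\hat t)=(T,1)$, $\Psi(\hat \ga)=(\id, i^{\dim G})$, $\Psi(\hat s)=(S,e)$ with $e(i)=e^{-i\frac{\pi}{4}n}$, together with the values of $\zeta_k$, into theorem \ref{theo:matrice_R}, rewrite everything in the basis indexed by $\mu=(\la+\rho)/k$, and reconcile the constants; the paper's proof does precisely this for $\hat s$ and $\hat t$ (the $\hat \ga$ case, which you treat explicitly, is left implicit there). Your $\hat t$ computation is complete and correct, and coincides with the paper's use of Freudenthal's strange formula, which the paper records in the equivalent form $\tfrac{B(\rho,\rho)}{2k}=\tfrac{1}{24}(\dim G - c)$. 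Two points, however, need repair. First, your displayed identity ``$k^{-n/2}\operatorname{Vol}(\Liet/\Lat)^{-1}=i^{|\Delta_+|}\,|\Lat^*/k\Lat|^{-1/2}$'' is false as written: the left-hand side is a positive real number. The genuine arithmetic you are looking for is the identity $|\Lat^*/\Lat|=\operatorname{Vol}^2(\Liet/\Lat)$, which gives $k^{-n/2}\operatorname{Vol}(\Liet/\Lat)^{-1}=|\Lat^*/k\Lat|^{-1/2}$ exactly (this is the one displayed equation in the paper's proof); the factor $i^{|\Delta_+|}$ must instead be absorbed into the phase bookkeeping via $\dim G = 2|\Delta_+|+n$.

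Second, and more seriously, your closing fallback --- verifying each generator's scalar phase only ``modulo the known relations in $\Mo_\infty$'' rather than computing it --- cannot finish the proof. The group $\Mo_\infty$ admits nontrivial characters (the morphism $\zeta_k$ of the second lemma is itself one, and there is a positive-dimensional family of them since the relations $(\hat\ga\hat s^2)^2=\id$, $(\hat s\hat t)^3=\hat s^2$ leave the scalars free up to a character), so consistency with the relations determines the three diagonal phases only up to an arbitrary character of $\Mo_\infty$ --- exactly the ambiguity the theorem's exact matrix equalities must exclude. You must therefore carry out the direct phase comparison for $\hat s$: show that $e(i)^{-1}\zeta_k(\hat s)=e^{i\frac{\pi}{4}(n+x_k)}$ matches the target factor $i^{|\Delta_+|}e^{-i\frac{\pi}{4}c}$ using $\dim G=2|\Delta_+|+n$ and $c=\dim G - x_k$ (the latter again from the strange formula). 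This is an elementary check of eighth roots of unity, and it is precisely the step the paper settles by ``comparing with the formula defining $\tilde s_{\la\mu}$''; you cannot sidestep pinning down $c$, since $c$ appears in the statement. Finally, the ``metaplectic index'' you propose to track plays no role in this theorem --- it only enters the trace asymptotics of section \ref{sec:asympt-prop-quant}.
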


Not only does it prove that $\tilde R_\infty = R_\infty$, but
  it also proves theorem \ref{theo:proj-rep-mod}. 
\begin{proof}
This follows from theorem \ref{theo:matrice_R}.  Since $\Psi ( \hat{s} ) =
(S,e)$ with $e( i ) = e^{-i \frac{\pi}{4}}$ and $\zeta_k (\hat{s} ) =
\exp ( i \frac{\pi}{4}  \dim G \frac{h^\vee}{k}  )$, the matrix of $\tilde{R}_\infty (\hat s ) $ is 
\begin{xalignat*}{2} 
  &  \frac{e^{i \frac{\pi}{4} \bigl( n + \frac{h
        ^\vee}{k} \dim G \bigr) } }{k ^{\frac{n}{2}}
  \operatorname{Vol} ( \Liet / \Lat )}   \sum_{w \in W } (-1)^{\ell
  (w) }\exp(-  2 i \pi k  B( \tfrac{\rho +  \mu}{k}, w(
\tfrac{\rho + \la}{k}) )   \\
 = &   e^{i \frac{\pi}{4} \bigl( n + \frac{h
        ^\vee}{k} \dim G \bigr) }   \Bigl| \frac{ \Lat^* }{k \Lat } \Bigr| ^{-1/ 2} \sum_{w \in W } (-1)^{\ell
  (w) }\exp(-  2 i \pi k^{-1}  B(\rho +  \mu, w(\rho + \la) ) 
\end{xalignat*}
because
$$ \Bigl| \frac{ \Lat^* }{\Lat } \Bigr| = \frac{\operatorname{Vol} ( \Liet/
\Lat ) }{ \operatorname{Vol} ( \Liet/
\Lat ^*  )} =  \operatorname{Vol} ^2 ( \Liet/
\Lat ).$$ 
To conclude it suffices to compare with the formula defining   $\tilde s_{\la \mu}$. Since $\Psi ( \hat t ) = ( T, 1)$ and because of the value of $\zeta_k (\hat{t} )$,   the matrix of  $\tilde R_\infty (
\hat t )$ is 
\begin{xalignat*}{2} 
 &  e^{-i \frac{\pi}{12}  \dim G \frac{h^\vee}{k}  }   \delta_{\mu,\la}
\exp ( i \pi k B( \tfrac{\rho + \mu }{k} , \tfrac{ \rho + \mu}{k} )  )
\\
= &  \delta_{\mu,\la}
\exp ( i \pi k^{-1}  B(  \mu  , 2 \rho + \mu  ))
\end{xalignat*}
where we have used that
$$ \frac{B( \rho, \rho )}{2 k } = \frac{1}{24} ( \dim G - c )$$ which
follows from Freudenthal's strange formula. 
\end{proof}

\section{Complex structure dependence in the semiclassical limit} \label{sec:compl-struct-depend}

\subsection{Fourier integral operators} \label{sec:def_FIO_1}

Let $M$ be a symplectic compact manifold with a prequantization
bundle $L$. Consider two pairs $(j_a, \delta_a)$ and $(j_b ,
\delta_b)$ consisting of a positive complex structure of $M$ 
together with a half-form bundle. 
We say that a section
$\varphi$ of $\Hom (\delta_a, \delta_b)$ is a half-form bundle
isomorphism if its square at $x$ is given by 
$$ \varphi_x ^{\otimes 2} =  \pi_{b,a,x}^* :
\wedge^{\operatorname{top}} E^*_{a,x} \rightarrow
\wedge^{\operatorname{top}} E_{b,x} ^* $$
where $\pi_{b,a,x}$ is the projection from $E_{b,x} := T^{1,0}_x (M, j_b)$ to
  $E_{a,x} := T^{1,0}_x (M, j_a)$ with kernel $\overline E_{b,x} $.

 For $c =a, b$, denote by $\Hilb_k (c)$ the space of sections of
 $L^k \otimes \delta_c$ holomorphic with respect to $j_c$. 
Consider a sequence
$(S_k)$ such that for every $k$, $S_k$ is an operator $\Hilb _k
(a) \rightarrow \Hilb _k (b)$.  The scalar product of $\Hilb _k
(a)$ gives us an isomorphism 
$$ \Hom (\Hilb _k (a) , \Hilb _k (b) ) \simeq  \Hilb _k (b)
\otimes \overline{\Hilb}
_k (a) .$$
The latter space can be regarded as the space of holomorphic sections of  
$$ (L^k \otimes \delta_b ) \boxtimes (\bar{L}^k \otimes \overline{\delta}_a)
\rightarrow M^2,$$
where $M^2$ is endowed with the complex structure $(j_b, -j_a)$. The
section $S_k (x,y)$ associated in this way to $S_k$ is its Schwartz kernel. 

We
say that $(S_k)$
is a Fourier integral operator with symbol the half-form bundle isomorphism $\varphi$ if 
\begin{gather} \label{def:FIO}
 S_k(x,y) =  \Bigl( \frac{k}{2\pi} \Bigr)^{n} E^k(x,y) f(x,y,k) + O
(k^{-\infty}) \end{gather}
where 
\begin{itemize}
\item 
$E$ is a section of  $L \boxtimes \bar{L}
\rightarrow M^2$ such that  $\| E(x,y)
\| <1 $ if $x \neq y$, 
$$ E (x,x) = u \otimes \bar{u}, \quad \forall u \in L_x \text{ such that }
  \| u \| = 1, $$
and $ \bar{\partial} E \equiv 0 $
modulo a section vanishing to any order along the diagonal.
\item
  $f(.,k)$ is a sequence of sections of  $ \delta_b  \boxtimes  \bar{\delta}_a
\rightarrow M^2$ which admits an asymptotic expansion in the $\Ci$
  topology of the form 
$$ f(.,k) = f_0 + k^{-1} f_1 + k^{-2} f_2 + ...$$
whose coefficients satisfy $\bar{\partial} f_i \equiv 0  $
modulo a section vanishing to any order along the
diagonal. 
\item The restriction to the diagonal of the leading
coefficient $f_0$ is equal to $\varphi$, if we identify $\delta_b
\otimes \overline{\delta}_a$ with $\Hom ( \delta_ a , \delta_b)$ using
the metric of $\delta_a$. 
\end{itemize}

\subsection{The maps $\Psi_{\tau_1, \tau_2, k}$ in the semi-classical limit}  

Recall that for any $\tau \in \Hp$, we defined a complex structure on
$\mtore^2$ together with a half-form bundle
$\delta_{\tau}$. Consider the morphism $\varphi_{\tau_1,
  \tau_2}$ form $\delta_{\tau_1}$ to $\delta_{\tau_2}$ given by  
$$ \varphi_{\tau_1, \tau_2} (\Om^{1/2} )  = \Bigl( \frac{ \tau_1 -
  \overline{\tau}_1}{\tau_2 - \overline{\tau}_1} \Bigr) ^{n/2}
\Om^{1/2}  $$
where the square root is determined in such a way that it depends
continuously on $\tau_1$, $\tau_2$ and equal to $1$ when $\tau_1 =
\tau_2$. 

\begin{lem} \label{lem:half_form_morphism}
The map $\varphi_{\tau_1,
  \tau_2}$ is a half-form bundle isomorphism.
\end{lem}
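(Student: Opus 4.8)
The plan is to verify the defining relation $\varphi_{\tau_1,\tau_2,x}^{\otimes 2} = \pi_{\tau_2,\tau_1,x}^*$ (with $a=\tau_1$ the source and $b=\tau_2$ the target) by an explicit computation in the linear coordinates of Lemma~\ref{lem:norm_can_bundle}. Since $\mtore^2$ is a quotient of the vector space $\Liet^2$ and all the data are translation invariant, both sides are constant sections of $\Hom( \wedge^n E_{\tau_1}^* , \wedge^n E_{\tau_2}^* )$, so it suffices to verify the equality at one point; and by tensoriality it is enough to evaluate both maps on the generator $\Om_{\tau_1}$.

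First I would compute the left-hand side. Squaring the definition of $\varphi_{\tau_1,\tau_2}$ and using that the squaring isomorphism $\delta_{\tau_i}^{\otimes 2} \to \wedge^n E_{\tau_i}^*$ sends $\omh \otimes \omh$ to $\Om_{\tau_i}$, I obtain
$$ \varphi_{\tau_1,\tau_2}^{\otimes 2}(\Om_{\tau_1}) = \Bigl( \frac{\tau_1 - \bar\tau_1}{\tau_2 - \bar\tau_1} \Bigr)^n \Om_{\tau_2}. $$
The half-integral power in the definition of $\varphi_{\tau_1,\tau_2}$ carried a branch choice, but this ambiguity disappears after squaring, so the identity above is unambiguous.

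The core of the argument is then to compute the projection $\pi_{\tau_2,\tau_1}$ and dualize. Recall from the proof of Lemma~\ref{lem:norm_can_bundle} that $E_\tau = T^{1,0}(\mtore^2, j_\tau)$ is spanned by the vectors $\partial_{\zeta^i_\tau} = (\tau - \bar\tau)^{-1}(-\bar\tau\,\partial_{p_i} + \partial_{q_i})$, while $\overline E_\tau$ is spanned by their conjugates. The projection $\pi_{\tau_2,\tau_1}\colon E_{\tau_2}\to E_{\tau_1}$ has kernel $\overline E_{\tau_2}$, so I would expand each $\partial_{\zeta^i_{\tau_2}}$ in the basis $\{ \partial_{\zeta^i_{\tau_1}}, \partial_{\bar\zeta^i_{\tau_2}} \}$ of the complexified tangent space and keep only its $E_{\tau_1}$-component. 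A short linear computation gives
$$ \pi_{\tau_2,\tau_1}( \partial_{\zeta^i_{\tau_2}} ) = \frac{\tau_1 - \bar\tau_1}{\tau_2 - \bar\tau_1}\, \partial_{\zeta^i_{\tau_1}}, $$
with no mixing of the indices $i$, since in the coordinates $(p_i,q_i)$ both $j_{\tau_1}$ and $j_{\tau_2}$ are block-diagonal, acting on each pair $(p_i,q_i)$ separately. Dualizing and passing to top exterior powers, then substituting the expression \eqref{eq:omtau} for $\Om_\tau$ — whose sign is the same for $\tau_1$ and $\tau_2$, as it depends only on the fixed orientation of the bases — I find $\pi_{\tau_2,\tau_1}^*(\Om_{\tau_1}) = \bigl( \frac{\tau_1-\bar\tau_1}{\tau_2-\bar\tau_1} \bigr)^n \Om_{\tau_2}$, in agreement with the left-hand side.

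The one structural point to address is that $\pi_{\tau_2,\tau_1}$ is well defined, i.e. that the complexified tangent space decomposes as $E_{\tau_1} \oplus \overline E_{\tau_2}$ at each point. This is exactly where positivity of the complex structures enters: the determinant of the change of basis is proportional to $\tau_2 - \bar\tau_1$, which never vanishes because $\tau_1, \tau_2 \in \Hp$ places $\bar\tau_1$ in the lower half-plane. Apart from this transversality, the proof is pure bookkeeping of the factors $\tau - \bar\tau$; the only delicate point is keeping track of which conjugate appears, so that the projection with kernel $\overline E_{\tau_2}$ yields the denominator $\tau_2 - \bar\tau_1$ and not the $\tau_2 - \bar\tau_2$ that the opposite projection direction would produce.
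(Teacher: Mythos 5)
Your proof is correct and takes essentially the same approach as the paper's: an explicit linear-algebra computation in the coordinates $\zeta^i_\tau$, producing the factor $\bigl( \frac{\tau_1 - \bar\tau_1}{\tau_2 - \bar\tau_1} \bigr)^{n}$ and invoking equation (\ref{eq:omtau}) to see that the ambient sign is the same for $\tau_1$ and $\tau_2$. The only cosmetic difference is that you compute the projection $\pi_{\tau_2,\tau_1}$ on the tangent vectors $\partial_{\zeta^i_{\tau_2}}$ and then dualize, whereas the paper expands $d\zeta^i_{\tau_2}$ in terms of $d\zeta^i_{\tau_1}$ and $d\bar\zeta^i_{\tau_1}$ and reads off $\pi^*_{\tau_2,\tau_1}$ directly on forms; your added remarks on the well-definedness of the projection and the disappearance of the branch ambiguity after squaring are correct points the paper leaves implicit.
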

\begin{proof}  Introduce an orthonormal basis $(u_i)$ of $\Liet$ and
denote by $(p_i)$ and $(q_i)$  the associated
linear coordinates of $\Liet^2$. Let $\zeta_\tau^i$ be the complex
coordinate $p_i + \tau q_i$. Let $E_\tau =\operatorname{span} (
\partial_{\zeta_\tau^1} , \ldots ,  \partial_{\zeta_\tau^n})$ be the
space of vector of type $(1,0)$ with respect to the complex structure
$j_\tau$. Let $\pi_{\tau_2, \tau_1}$ be the projection from
$E_{\tau_2}$ to $E_{\tau_1}$ with kernel $\overline{E}_{\tau_2}$. 
One has 
$$ d \zeta_{\tau_2}^i = \Bigl( \frac{\tau_2 - \overline{\tau}_1}{ \tau_1 -
  \overline{\tau}_1} \Bigr) d \zeta_{\tau_1}^i +  \Bigl( \frac{ \tau_1 - \tau_2}{ \tau_1 -
  \overline{\tau}_1} \Bigr) d \overline{\zeta}_{\tau_1}^i
$$
which implies that 
$$ \pi_{\tau_2, \tau_1} ^* d \zeta_{\tau_1}^i = \Bigl( \frac{ \tau_1 -
  \overline{\tau}_1}{\tau_2 - \overline{\tau}_1} \Bigr) d
\zeta_{\tau_2}^i $$
and then  
$$ \pi_{\tau_2, \tau_1} ^* ( d \zeta_{\tau_1}^1 \wedge \ldots \wedge d
\zeta_{\tau_1}^n ) = \Bigl( \frac{ \tau_1 -
  \overline{\tau}_1}{\tau_2 - \overline{\tau}_1} \Bigr) ^{n}  d \zeta_{\tau_2}^1 \wedge \ldots \wedge d
\zeta_{\tau_2}^n  $$
Finally recall that by equation (\ref{eq:omtau}) the squaring map of $\delta_\tau$ sends
$\Om^{1/2}$ into $ \pm \operatorname{Vol} ( \Liet / \Lat )d \zeta_{\tau}^1 \wedge \ldots \wedge d
\zeta_{\tau}^n  $. 
\end{proof}

\begin{theo} \label{theo:FIO}
For any $\tau_1, \tau_2 \in \Hp$, the sequence $(
  \Psi_{\tau_1, \tau_2, k })_k$ is a Fourier integral operator with
  symbol $\varphi_{\tau_1, \tau_2}$. 
\end{theo}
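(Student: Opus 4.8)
The plan is to compute the Schwartz kernel of $\Psi_{\tau_1,\tau_2,k}$ explicitly from the orthonormal basis, and then recognise it, after a Poisson summation, as an expression of the form (\ref{def:FIO}). By Theorem \ref{theo:norm_half_form} the vectors $(k/2\pi)^{n/4}\,\Theta^{\tau_i}_{\mu,k}s^k_{\tau_i}\otimes\omh$ (for $\mu\in k^{-1}\Lat^*$ mod $\Lat$) are orthonormal bases, and $\Psi_{\tau_1,\tau_2,k}$ sends the first to the second. Hence its kernel is
$$ \Psi_{\tau_1,\tau_2,k}(x,y) = \Bigl(\frac{k}{2\pi}\Bigr)^{n/2}\Bigl(\sum_{\mu}\Theta^{\tau_2}_{\mu,k}(x)\,\overline{\Theta^{\tau_1}_{\mu,k}(y)}\Bigr)\; s^k_{\tau_2}(x)\otimes\overline{s^k_{\tau_1}(y)}\otimes(\omh\otimes\bar\omh), $$
where the half-form part is the constant section $\omh\otimes\bar\omh$ of $\delta_{\tau_2}\boxtimes\bar\delta_{\tau_1}$. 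Expanding the theta functions and using $k^{-1}\Lat^*=\bigsqcup_\mu(\mu+\Lat)$, the double sum collapses to a single sum over $\ga\in k^{-1}\Lat^*$ and $\la=\ga-\ga'\in\Lat$, giving a Gaussian theta series whose quadratic form in $\ga$ has coefficient $\tfrac{\tau_2-\bar\tau_1}{2}B(\ga,\ga)$; since $\operatorname{Im}(\tau_2-\bar\tau_1)=\operatorname{Im}\tau_2+\operatorname{Im}\tau_1>0$, this is a genuinely decaying Gaussian in $\ga$.

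For each fixed $\la$ I would evaluate the sum over the fine lattice $k^{-1}\Lat^*$ by Poisson summation. The principal (dual-lattice-zero) term equals the inverse covolume of $k^{-1}\Lat^*$ times the Gaussian integral over $\Liet$, and thus scales as $k^{n}\operatorname{Vol}(\Liet/\Lat)\cdot k^{-n/2}$; combined with the normalisation prefactor $(k/2\pi)^{n/2}$ this produces exactly the global power $(k/2\pi)^{n}$ required by (\ref{def:FIO}), with $n=\dim_{\C}(\mtore^2,j_\tau)$ the rank of $G$. Completing the square in $\ga$ turns the linear terms $B(\bar\zeta_1(y)-\zeta_2(x),\ga)$ into a Gaussian exponent in the coordinate difference, and the remaining dual-lattice terms are $O(k^{-\infty})$ uniformly.

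The $\la=0$ summand, together with $s^k_{\tau_2}(x)\,\overline{s^k_{\tau_1}(y)}$, reconstructs $E^k(x,y)$, where $E$ is a section of $L\boxtimes\bar L$ holomorphic for $(j_{\tau_2},-j_{\tau_1})$ modulo terms flat on the diagonal; from the explicit Gaussian exponent I would check directly that $\|E\|\le 1$ with equality exactly on the diagonal and that $E(x,x)=u\otimes\bar u$ for $\|u\|=1$. The terms $\la\ne 0$ are the lattice translates: their exponents have strictly negative real part off the diagonal, so after factoring $E^k$ they vanish to infinite order along the diagonal and are absorbed into $f$ and the $O(k^{-\infty})$ error. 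The $k$-independent Gaussian-integral constant yields $f_0$; restricting to the diagonal and invoking Lemma \ref{lem:norm_can_bundle} together with the explicit value of $\varphi_{\tau_1,\tau_2}$ from Lemma \ref{lem:half_form_morphism}, the factor $\bigl((\tau_1-\bar\tau_1)/(\tau_2-\bar\tau_1)\bigr)^{n/2}$ emerges and identifies $f_0|_{\mathrm{diag}}=\varphi_{\tau_1,\tau_2}$.

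The hard part will be the bookkeeping of the Poisson summation: matching the exact power of $k$, the branch of the $n/2$-power, and the overall constant, while simultaneously verifying the $\bar\partial$-conditions on each coefficient $f_i$ to infinite order along the diagonal and showing that the $\la\ne 0$ terms are uniformly $O(k^{-\infty})$. Pinning down the phase of the leading symbol so that it agrees with the continuous determination of the square root fixed in Lemma \ref{lem:half_form_morphism} is the most delicate point.
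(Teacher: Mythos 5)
Your proposal follows essentially the same route as the paper's proof: expand the Schwartz kernel in the theta basis of Theorem \ref{theo:norm_half_form}, rewrite the double sum as a sum over $\ga \in k^{-1}\Lat^*$ and a difference $\la \in \Lat$, apply Poisson summation over the fine lattice, kill the lattice translates by Gaussian estimates near the diagonal, and identify $E$ and the symbol exactly as in the paper's Lemma \ref{lem:E_et_f0} via Lemmas \ref{lem:norm_can_bundle} and \ref{lem:half_form_morphism} (the paper merely orders the steps slightly differently, bounding the $\ga_1 \neq \ga_2$ terms directly and Poisson-summing only the diagonal $\la = 0$ part, whereas you Poisson-sum each fixed $\la$). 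Your power counting and constants match the paper's, including the $(k/2\pi)^{n/2}$ normalisation and the identification $f_0|_{\mathrm{diag}} = \varphi_{\tau_1,\tau_2}$.
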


Section \ref{sec:proof-theo} is devoted to the proof.

\subsection{Proof of theorem \ref{theo:FIO}} \label{sec:proof-theo}

As previously we lift everything from $\mtore^2$ to $\Liet^2$. Denote
by $p_2, q_2$ (resp. $p_1, q_1$) the coordinates on the left
(resp. right) factor of $\Liet^2 \times \Liet^2$. Let $\zeta_i = p_i +
\tau_i q_i$ for $i =1,2$. 

Let us first compute  the section $E$ and the leading coefficient $f_0$ of (\ref{def:FIO}). Let $L_{\Liet^2}$ be the trivial line bundle over $\Liet^2$ with the
connection defined in section \ref{sec:symplectic-datas}. Consider the bundle $L
_{\Liet^2}\boxtimes \overline{L}_{\Liet^2}$ endowed with the holomorphic
structure compatible with the connection and the complex
structure $(j_{\tau_2}, - j_{\tau_1})$ of $\Liet^2 \times \Liet^2$. 

\begin{lem} \label{lem:E_et_f0}
The section of $L
_{\Liet^2}\boxtimes \overline{L}_{\Liet^2}$ 
$$\exp\Bigl(  \frac{ - i \pi }{ \tau_2 - \overline{\tau}_1 }B(\zeta_{2} -
\overline{\zeta}_{1}, \zeta_{2} -
\overline{\zeta}_{1})  \Bigr)  s_{\tau_2} \boxtimes \overline{s}_{\tau_1} $$ 
is holomorphic and restricts to the constant section equal to 1 on the
diagonal.   The morphism $\varphi_{\tau_1, \tau_2}$ is sent to the
constant section equal to 
$$  \Bigl( \frac{2 i \pi }{ \tau_2 - \overline{\tau_1} } \Bigr)^{n/2}
\operatorname{Vol}( \Liet/ \Lat) \Om^{1/2} \otimes \overline{\Om}^{1/2} $$
by the isomorphism between $\Hom ( \delta_{\tau_1}, \delta_{\tau_2} )$
and $ \delta_{\tau_2} \otimes \overline{\delta}_{\tau_1}$ induced by
the metric of $\delta_{\tau_1}$.
\end{lem}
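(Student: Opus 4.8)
The plan is to treat the three assertions in turn, since the first two are direct computations and only the third requires unwinding the metric identification. For holomorphicity, I would first recall that, by the computation preceding theorem \ref{theo:base_theta}, the section $s_\tau = \exp(i\pi B(\zeta,q))$ is a nowhere vanishing holomorphic section of $L_{\Liet^2}$ for the structure $j_\tau$. Hence $\overline{s}_{\tau_1}$ is holomorphic for $-j_{\tau_1}$, and the product $s_{\tau_2}\boxtimes\overline{s}_{\tau_1}$ is a nowhere vanishing holomorphic section of $L_{\Liet^2}\boxtimes\overline{L}_{\Liet^2}$ for the complex structure $(j_{\tau_2},-j_{\tau_1})$. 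The holomorphic coordinates of this structure are $\zeta_2$ on the left factor and $\overline{\zeta}_1$ on the right factor, so a function is holomorphic precisely when it depends holomorphically on $\zeta_2$ and $\overline{\zeta}_1$. Since $\zeta_2-\overline{\zeta}_1$ is such a function, so is the scalar prefactor, which is the exponential of a quadratic form in $\zeta_2-\overline{\zeta}_1$; multiplying the holomorphic reference section by a holomorphic function yields a holomorphic section.

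For the restriction to the diagonal, I would set $(p_2,q_2)=(p_1,q_1)=(p,q)$, so that $\zeta_2-\overline{\zeta}_1=(\tau_2-\overline{\tau}_1)q$ and the prefactor becomes $\exp(-i\pi(\tau_2-\overline{\tau}_1)B(q,q))$. On the other hand, using that $B(p,q)$ and $B(q,q)$ are real, a short computation gives $s_{\tau_2}\boxtimes\overline{s}_{\tau_1}=\exp(i\pi(\tau_2-\overline{\tau}_1)B(q,q))$ on the diagonal, so the two factors cancel and the section equals $1$ there.

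The substantive part is the identification of $\varphi_{\tau_1,\tau_2}$. By lemma \ref{lem:half_form_morphism}, $\varphi_{\tau_1,\tau_2}(\omh)=\bigl((\tau_1-\overline{\tau}_1)/(\tau_2-\overline{\tau}_1)\bigr)^{n/2}\omh$. Under the isomorphism $\Hom(\delta_{\tau_1},\delta_{\tau_2})\simeq\delta_{\tau_2}\otimes\overline{\delta}_{\tau_1}$ induced by the metric of $\delta_{\tau_1}$, the dual of $\omh\in\delta_{\tau_1}$ corresponds to $\overline{\omh}/\|\omh\|_{\tau_1}^2$; hence $\varphi_{\tau_1,\tau_2}$ is sent to $\bigl((\tau_1-\overline{\tau}_1)/(\tau_2-\overline{\tau}_1)\bigr)^{n/2}\|\omh\|_{\tau_1}^{-2}\,\omh\otimes\overline{\omh}$. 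I would then substitute the value $\|\omh\|_{\tau_1}^2=((\tau_1-\overline{\tau}_1)/2i\pi)^{n/2}\operatorname{Vol}(\Liet/\Lat)^{-1}$ recorded after lemma \ref{lem:norm_can_bundle}; the factors $(\tau_1-\overline{\tau}_1)^{n/2}$ cancel and one obtains exactly $(2i\pi/(\tau_2-\overline{\tau}_1))^{n/2}\operatorname{Vol}(\Liet/\Lat)\,\omh\otimes\overline{\omh}$.

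I expect the main obstacle to be purely bookkeeping: matching the branches of the $n/2$-th powers appearing in $\varphi_{\tau_1,\tau_2}$ and in $\|\omh\|_{\tau_1}^2$ so that their product is the continuous determination meant in the statement, and correctly handling the antilinear metric identification $\delta_{\tau_1}^*\simeq\overline{\delta}_{\tau_1}$, which introduces the factor $\|\omh\|_{\tau_1}^{-2}$ rather than $\|\omh\|_{\tau_1}^{-1}$. Once these conventions are pinned down, no further estimate is needed.
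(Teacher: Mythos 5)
Your proposal is correct and takes essentially the same route as the paper, whose proof is just the one-line remark that the first part is a straightforward computation and the second follows from Lemma \ref{lem:norm_can_bundle}: your verification of holomorphy in the coordinates $(\zeta_2,\overline{\zeta}_1)$, the cancellation $\exp(-i\pi(\tau_2-\overline{\tau}_1)B(q,q))\cdot\exp(i\pi(\tau_2-\overline{\tau}_1)B(q,q))=1$ on the diagonal, and the metric identification producing the factor $\|\Om^{1/2}\|_{\tau_1}^{-2}=\bigl(\tfrac{2i\pi}{\tau_1-\overline{\tau}_1}\bigr)^{n/2}\operatorname{Vol}(\Liet/\Lat)$ are exactly the intended details. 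One cosmetic remark: the formula $\varphi_{\tau_1,\tau_2}(\Om^{1/2})=\bigl(\tfrac{\tau_1-\overline{\tau}_1}{\tau_2-\overline{\tau}_1}\bigr)^{n/2}\Om^{1/2}$ is the definition of $\varphi_{\tau_1,\tau_2}$ rather than the content of Lemma \ref{lem:half_form_morphism}, and since $(\tau_1-\overline{\tau}_1)/2i\pi$ is a positive real number its power is the positive root, so the branch bookkeeping you flag is indeed harmless.
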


The first part is proved by a straightforward computation and the
second part follows from lemma \ref{lem:norm_can_bundle}. By theorem \ref{theo:norm_half_form}, 
the Schwartz kernel of $\Psi_{\tau_1,\tau_2,k}$ lifts from $\mtore^2
\times \mtore^2$ to $\Liet^2
\times \Liet^2$ into  
\begin{xalignat*}{2}
  S_k( p_2, q_2, p_1, q_1)=  &   \sum _{\mu \in (k^{-1}\Lat ^*)/\Lat} \Theta_{\mu,k}^{\tau_2} ( p_2, q_2)
\overline{\Theta}^{\tau_1}_{\mu, k } ( p_1, q_1)  (s_{\tau_2}^k \Om^{1/2}) \boxtimes
(\overline{s}^k_{\tau_1} \overline{\Om}^{1/2}) \\
= &  \sum _{\substack{\mu \in (k^{-1}\Lat ^*) / \Lat \\ \ga_1 , \ga_2
    \in \mu + \Lat }} f_{\ga_2,\ga_1} ( p_2, q_2, p_1, q_1)   (s_{\tau_2}^k \Om^{1/2}) \boxtimes
(\overline{s}^k_{\tau_1} \overline{\Om}^{1/2}) 
\end{xalignat*}
where the coefficients are given by
 $$ f_{\ga_2,\ga_1} = \exp \bigl( 2 i \pi k \bigl(
  \tfrac{\tau_2}{2} B( \ga_2, \ga_2 ) - B( \zeta_2, \ga_2 ) - \tfrac{\overline{\tau}_1}{2} B( \ga_1, \ga_1 ) + B( \overline{\zeta}_1, \ga_1 ) \bigr) \bigr).
$$

\begin{lem} \label{lem:negl}
For any compact set $K$ of $\Liet$ such that $K \cap  \Lat =
\emptyset$, there exists $C>0$ such that for all $p_2,q_2, p_1,q_1$
satisfying $q_1 - q_2 \notin K$ one has 
$$  \bigl| S_k( p_2, q_2, p_1, q_1)  \bigr| \leqslant C e^{-k/C}$$
\end{lem}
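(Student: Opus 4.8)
The goal is to show the Schwartz kernel $S_k$ is exponentially small away from the diagonal in the $q$-variable, i.e.\ when $q_1 - q_2$ stays away from $\Lat$. The plan is to estimate each coefficient $f_{\ga_2,\ga_1}$ pointwise and then control the sum over $\ga_1, \ga_2 \in \mu + \Lat$ and over $\mu \in (k^{-1}\Lat^*)/\Lat$. The key observation is that the modulus of $f_{\ga_2, \ga_1}$ decays like a Gaussian in $\ga_1$ and $\ga_2$, and the center of this Gaussian depends on $q_1$ and $q_2$; when $q_1 - q_2 \notin \Lat$ the two centers cannot simultaneously sit near lattice points, forcing a uniform gap.

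First I would compute $|f_{\ga_2, \ga_1}|$. Writing $\zeta_i = p_i + \tau_i q_i$ and separating real and imaginary parts, the exponent has real part
\begin{gather*}
 2 \pi k \bigl( -\tfrac{1}{2}\operatorname{im}(\tau_2) B(\ga_2,\ga_2) + \operatorname{im}(\tau_2) B(q_2, \ga_2) -\tfrac{1}{2}\operatorname{im}(\tau_1) B(\ga_1,\ga_1) + \operatorname{im}(\tau_1) B(q_1, \ga_1)\bigr),
\end{gather*}
using that $\operatorname{im}(\overline{\zeta}_1) = -\operatorname{im}(\tau_1) q_1$ and $\operatorname{im}(\overline{\tau}_1) = -\operatorname{im}(\tau_1)$. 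Completing the square in each variable, this equals $-\pi k \operatorname{im}(\tau_2) B(\ga_2 - q_2, \ga_2 - q_2) - \pi k \operatorname{im}(\tau_1) B(\ga_1 - q_1, \ga_1 - q_1)$ up to a factor independent of $\ga_1, \ga_2$ that only depends on $q_1, q_2$ and cancels against the normalization. So $|f_{\ga_2,\ga_1}|$ is a product of two Gaussians centered at $q_2$ and $q_1$ respectively, with widths shrinking like $k^{-1/2}$, and $B$ positive definite guarantees genuine decay.

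Next I would exploit the constraint $\ga_1 - \ga_2 \in \Lat$ (both lie in $\mu + \Lat$). The sum $\sum_{\ga_2 \in \mu + \Lat} |f_{\ga_2, \ga_1}|$ over all $\mu$ and all $\ga_1$ in the same coset amounts, after the completion of squares, to a sum over pairs $(\ga_1, \ga_2) \in \Lat^2 \times \Lat^2$ (after rescaling $\Lat^* $ appropriately) of Gaussian terms, where what matters is how close $\ga_2$ can be to $q_2$ and $\ga_1$ to $q_1$ \emph{simultaneously}. Since $\ga_1 - \ga_2 \in \Lat$, if both $\ga_2$ were within distance $\delta$ of $q_2$ and $\ga_1$ within $\delta$ of $q_1$, then $q_1 - q_2$ would be within $2\delta$ of a lattice point. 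The hypothesis $q_1 - q_2 \notin K$ with $K \cap \Lat = \emptyset$ gives a positive lower bound $\eta>0$ on the distance from $q_1 - q_2$ to $\Lat$, so at least one of $B(\ga_2 - q_2, \ga_2 - q_2)$, $B(\ga_1 - q_1, \ga_1 - q_1)$ is bounded below by a fixed positive constant; this produces the factor $e^{-k/C}$. The remaining Gaussian sum over the lattice is then bounded by a geometric-type series (uniformly in $k \geq 1$, using the standard comparison of a lattice Gaussian sum with an integral), absorbing the $(k/2\pi)^n$-type prefactors and leaving a constant $C$.

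The main obstacle is the bookkeeping of the two coupled lattice sums: one must track that the residual distance-squared lower bound survives summation over the entire lattice, not merely for the single closest term. I would handle this by splitting each lattice sum into the dominant terms nearest the Gaussian centers and a tail, bounding the tail by a convergent Gaussian lattice sum independent of $k$ and using the lattice separation forced by $\operatorname{dist}(q_1 - q_2, \Lat) \geq \eta$ to extract the $e^{-k/C}$ factor from the near terms. The boundedness of the number of lattice cosets $\mu$ modulo $\Lat$ (finite, since $\Lat^* / \Lat$ is finite after the $k$-scaling, or more precisely the sum over $\mu$ recombines with the sums over $\ga_i$ into a single lattice sum) ensures the estimate is uniform.
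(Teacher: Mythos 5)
Your proposal is correct and follows essentially the same route as the paper's proof: bound the pointwise norm $\bigl| f_{\ga_2,\ga_1}\, s^k_{\tau_2} \boxtimes \overline{s}^k_{\tau_1} \bigr|$ by a product of Gaussians in $\ga_2 - q_2$ and $\ga_1 - q_1$ (the completion-of-squares factor being cancelled exactly by the norm of the frame $s^k_{\tau_2} \boxtimes \overline{s}^k_{\tau_1}$), use the constraint $\ga_2 - \ga_1 \in \Lat$ together with the lower bound on the distance from $q_1 - q_2$ to $\Lat$ to force at least one exponent to stay bounded below, and absorb the remaining lattice sums via the standard Gaussian estimates (the paper's lemma \ref{lem:estim}). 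Two small points: the hypothesis as printed, $q_1 - q_2 \notin K$, is a typo for $q_1 - q_2 \in K$ (this is how both you and the paper's own proof use it, since only $K$ compact with $K \cap \Lat = \emptyset$ yields the uniform gap $\eta > 0$), and the set of cosets $\mu \in (k^{-1}\Lat^*)/\Lat$ is not finite uniformly in $k$ but has cardinality $k^n |\Lat^*/\Lat|$, a polynomial factor that your exponential gain absorbs just as in the paper.
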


This shows that the sequence of Schwartz kernels of $\Psi_{\tau_1,
  \tau_2, k}$ is a $O(k^{-\infty})$ outside the
diagonal. 
For the proof of the lemma we need the following general estimates. 

\begin{lem} \label{lem:estim}
For any $C>0$ there exists $C'>0$ such that
$$ \sum_{\ga \in \Z^n}  e^{-k C |\ga - x |^2} \leqslant C' k^{n/2},
\qquad \forall x \in \R^n $$
For any $C>0$, for any compact $K$ of $\R^n$ and for any subset $P$
of $\Z^n$ such that $ K \cap ( \Z^n \setminus P) = \emptyset$, there
exists $C'>0$ such that
$$ \sum_{\ga \in \Z^n \setminus P }  e^{-k C |\ga - x |^2} \leqslant C'
  e^{- k /C'}, \quad \forall x \in K.  $$
\end{lem}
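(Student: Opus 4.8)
The plan is to treat the two estimates in turn, the first being the workhorse for the second.

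For the first estimate I would exploit that the Euclidean norm makes the summand a product: writing $x=(x_1,\dots,x_n)$ and $\gamma=(\gamma_1,\dots,\gamma_n)$, one has $|\gamma-x|^2=\sum_{i}(\gamma_i-x_i)^2$, so
$$ \sum_{\gamma\in\Z^n} e^{-kC|\gamma-x|^2}=\prod_{i=1}^{n}\Bigl(\sum_{m\in\Z}e^{-kC(m-x_i)^2}\Bigr). $$
It then suffices to bound the one-dimensional sum $g_k(t):=\sum_{m\in\Z}e^{-kC(m-t)^2}$ uniformly in $t\in\R$ and in $k\geqslant 1$. Since $g_k$ is $1$-periodic I may assume $|t|\leqslant \tfrac12$; then the term $m=0$ is $\leqslant 1$, and for $m\neq 0$ one has $|m-t|\geqslant |m|-\tfrac12\geqslant |m|/2$, whence $\sum_{m\neq0}e^{-kC(m-t)^2}\leqslant 2\sum_{m\geqslant1}e^{-Cm^2/4}=:A<\infty$ for $k\geqslant1$. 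Thus $g_k(t)\leqslant 1+A$ and $\sum_{\gamma}e^{-kC|\gamma-x|^2}\leqslant(1+A)^n$, a constant independent of $x$ and of $k\geqslant1$; since $k^{n/2}\geqslant1$ this gives the claimed inequality.

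For the second estimate the key preliminary observation is that $\Z^n\setminus P$ is a closed subset of $\R^n$, being a subset of the discrete set $\Z^n$. As $K$ is compact and disjoint from $\Z^n\setminus P$, the continuous function $x\mapsto \operatorname{dist}(x,\Z^n\setminus P)$ is strictly positive on $K$ and therefore attains a minimum $\delta>0$. Hence $|\gamma-x|\geqslant\delta$ for every $\gamma\in\Z^n\setminus P$ and every $x\in K$. I would then split each Gaussian as
$$ e^{-kC|\gamma-x|^2}\leqslant e^{-\frac{kC}{2}\delta^2}\,e^{-\frac{kC}{2}|\gamma-x|^2}, $$
pull the $x$- and $\gamma$-independent factor $e^{-\frac{kC}{2}\delta^2}$ out of the sum, and bound the remaining sum $\sum_{\gamma\in\Z^n}e^{-\frac{kC}{2}|\gamma-x|^2}$ by the constant produced by the first estimate (applied with $C/2$ in place of $C$). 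This yields $\sum_{\gamma\in\Z^n\setminus P}e^{-kC|\gamma-x|^2}\leqslant C'' e^{-\frac{kC}{2}\delta^2}$ for a constant $C''$, uniformly in $x\in K$.

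Finally I would absorb this into the desired form $C'e^{-k/C'}$: choosing any $C'\geqslant\max\{C'',\,2/(C\delta^2)\}$ gives $1/C'\leqslant \tfrac{C\delta^2}{2}$ and hence $C''e^{-\frac{kC}{2}\delta^2}\leqslant C'e^{-k/C'}$ for all $k\geqslant1$. The only genuinely non-computational step is the extraction of the uniform gap $\delta$, which is where compactness of $K$ and closedness of $\Z^n\setminus P$ enter; once $\delta$ is in hand both estimates reduce to elementary Gaussian manipulations, and passing from $C''e^{-\frac{kC}{2}\delta^2}$ to $C'e^{-k/C'}$ is immediate because the rate $\tfrac{C\delta^2}{2}$ is a fixed positive constant.
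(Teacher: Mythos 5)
Your proof is correct. Note, however, that the paper itself offers no proof of this lemma at all: it is stated as a pair of ``general estimates'' and used immediately in the proof of lemma \ref{lem:negl}, so there is no argument of the author's to compare against --- your write-up fills a gap the paper leaves to the reader. Your route is the natural one: tensorize the Gaussian sum into one-dimensional sums, use $1$-periodicity to reduce to $|t|\leqslant \tfrac12$, and bound the tail by a convergent series; in fact you prove something slightly stronger than stated, namely that the sum is bounded by a constant $(1+A)^n$ independent of $k\geqslant 1$, with the factor $k^{n/2}$ recovered only via $k^{n/2}\geqslant 1$ (the paper's applications, e.g. in the proof of lemma \ref{lem:negl}, only need the stated weaker form, so this is harmless and even cleaner). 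For the second part, the extraction of the uniform gap $\delta>0$ from compactness of $K$ and closedness of $\Z^n\setminus P$ (closed because it is a subset of the discrete closed set $\Z^n$), followed by splitting the Gaussian as $e^{-kC|\ga-x|^2}\leqslant e^{-kC\delta^2/2}e^{-kC|\ga-x|^2/2}$ and invoking part one with $C/2$, is exactly the standard argument, and your final absorption of the constants into the form $C'e^{-k/C'}$ is valid. The only cosmetic omission is the degenerate case $\Z^n\setminus P=\emptyset$, where the sum is empty and the bound is trivial; you may also wish to remark that the paper applies the lemma to the lattice $\Lat$ rather than $\Z^n$, which reduces to your statement by a linear change of variables and equivalence of norms (changing only the constant $C$).
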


\begin{proof}[Proof of lemma \ref{lem:negl}]
By a straightforward computation we obtain that
$$  \bigl| f_{\ga_2,\ga_1} s_{\tau_2}^k \boxtimes
\overline{s}^k_{\tau_1} \bigr| =   \exp \Bigl( - k \pi \Bigl( \frac{\tau_2 - \overline{\tau}_2}{2i} | q_2 - \ga_2 |^2  +
\frac{\tau_1 - \overline{\tau}_1}{2i}  | q_1 - \ga_1 |^2 \Bigr)
\Bigr) $$
Hence for some positive $C$, 
$$  \bigl| f_{\ga_2,\ga_1} s_{\tau_2}^k \boxtimes
\overline{s}^k_{\tau_1} \bigr|  \leqslant e ^{-k C ( | \ga_2 - q_2
|^2 + |  ( q_2 - q_1 ) - (\ga_2 - \ga_1) |^2  )}   $$
So with $q = q_2 - q_1$,  
\begin{xalignat*}{2} 
 \sum_{\ga_1 , \ga_2 \in \mu + \Lat }\bigl| f_{\ga_2,\ga_1} s_{\tau_2}^k \boxtimes
\overline{s}^k_{\tau_1} \bigr| \leqslant  &  \sum_{ \substack{ \ga_2 \in
    \mu + \Lat \\ \ga \in \Lat }}    e ^{ -k C ( | \ga_2 - q_2
|^2 + | q  - \ga |^2  )} \\ 
 \leqslant & \Bigl( \sum_ {\ga \in
    \Lat } e^{ -k C | \mu + \ga - q_2
|^2 }\Bigr) \Bigl( \sum_{\ga \in \Lat } e^{ -k C | \ga - q
|^2 } \Bigr)
\end{xalignat*}
By lemma \ref{lem:estim}, there exists $C'>0$  such that the first factor is bounded by
$C'k^{n/2}$, the second one by  $C' e^{-k/C'}$, and these estimates are uniform with respect to $\mu$,
$q_1$ and $q_2$ such that $q_1 - q_2 \in K$. Since the cardinal of $(k^{-1} \Lat^*) / \Lat$ is
$k^{n} |\Lat^* / \Lat|$, we obtain with a larger $C'$ that   
$$ \sum_{\substack{\mu \in (k^{-1}\Lat ^*) / \Lat \\ \ga_1 , \ga_2
    \in \mu + \Lat }} \bigl| f_{\ga_2,\ga_1}( p_2, q_2, p_1, q_1 ) s_{\tau_2}^k \boxtimes
\overline{s}^k_{\tau_1} \bigr| \leqslant C' e^{-k/C'}$$
which proves the result.
\end{proof}
Using exactly the same method and lemma \ref{lem:estim} with $P
=\{0\}$, we show 
\begin{lem} 
There exists $C>0$ such that 
$$ \sum_{\substack{\mu \in (k^{-1}\Lat ^*) / \Lat \\ \ga_1 , \ga_2
    \in \mu + \Lat, \; \ga_1 \neq \ga_2 }} \bigl| f_{\ga_2,\ga_1}( p_2, q_2, p_1, q_1 ) s_{\tau_2}^k \boxtimes
\overline{s}^k_{\tau_1} \bigr| \leqslant C e^{-k/C}$$
for all $p_2$, $q_2$, $p_1$ and $q_1$ such that $|q_1 - q_2 |
\leqslant R/2$ with $R = \min \bigl\{ |\ga|, \; \ga \in \Lat \setminus \{ 0 \} \bigr\}$.
\end{lem}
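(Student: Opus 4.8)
The plan is to follow the argument of lemma \ref{lem:negl} essentially verbatim, changing only the place from which the exponential smallness is extracted. First I would start from the pointwise estimate already established in the proof of lemma \ref{lem:negl}, namely that for some $C>0$
$$ \bigl| f_{\ga_2,\ga_1} s_{\tau_2}^k \boxtimes \overline{s}^k_{\tau_1} \bigr| \leqslant e^{-kC ( |\ga_2 - q_2|^2 + |(q_2 - q_1) - (\ga_2 - \ga_1)|^2 )}, $$
valid for all $\ga_1,\ga_2$. As before I set $q = q_2 - q_1$ and $\ga = \ga_2 - \ga_1$; since $\ga_1$ and $\ga_2$ lie in the same coset $\mu + \Lat$, the difference $\ga$ ranges over $\Lat$. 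The only new feature is the constraint $\ga_1 \neq \ga_2$, which forces $\ga \in \Lat \setminus \{0\}$. Letting $\ga_2$ run over $\mu + \Lat$ and $\ga$ over $\Lat \setminus \{0\}$ independently, the inner sum over one coset factors as
$$ \Bigl( \sum_{\ga_2 \in \mu + \Lat} e^{-kC|\ga_2 - q_2|^2} \Bigr) \Bigl( \sum_{\ga \in \Lat \setminus \{0\}} e^{-kC|q - \ga|^2} \Bigr). $$

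The key geometric observation is that the second factor is exponentially small precisely because of the hypothesis $|q_1 - q_2| \leqslant R/2$. Indeed, the closed ball $K = \{ x \in \Liet \; / \; |x| \leqslant R/2 \}$ is disjoint from $\Lat \setminus \{0\}$, since by the definition of $R$ every nonzero lattice vector has length at least $R > R/2$. Thus $q \in K$, and lemma \ref{lem:estim} applied with $P = \{0\}$ (after transporting its statement from $\Z^n$ to $\Lat$ by choosing a basis of $\Lat$, so that $|q-\ga|^2$ is comparable to the standard quadratic form) bounds the second factor by $C' e^{-k/C'}$, uniformly in $q \in K$. The first factor is bounded by $C' k^{n/2}$ by the first part of lemma \ref{lem:estim}.

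Finally I would sum over $\mu$. The number of cosets in $(k^{-1}\Lat^*)/\Lat$ is $k^n |\Lat^*/\Lat|$, so the total is at most $k^n |\Lat^*/\Lat| \cdot C' k^{n/2} \cdot C' e^{-k/C'}$; the polynomial prefactor $k^{3n/2}$ is absorbed into the exponential at the cost of enlarging the constant, which yields a bound of the announced form $C e^{-k/C}$. I do not expect a genuine obstacle here, as the argument is a direct transcription of lemma \ref{lem:negl} with the roles of the two lattice sums reorganised. The only points deserving attention are to verify that the threshold $R/2$ in the hypothesis is exactly what makes $K$ avoid $\Lat \setminus \{0\}$, and that the constant $C'$ produced by lemma \ref{lem:estim} is uniform over the whole ball $K$ rather than merely pointwise in $q$; both follow at once from the compactness of $K$ and its strictly positive distance to $\Lat \setminus \{0\}$.
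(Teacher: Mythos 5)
Your proof is correct and is precisely the argument the paper intends: the paper itself states that this lemma follows ``using exactly the same method'' as lemma \ref{lem:negl} together with lemma \ref{lem:estim} applied with $P=\{0\}$, which is exactly your reorganisation with $\ga = \ga_2 - \ga_1 \in \Lat \setminus \{0\}$ and $q = q_2 - q_1$ confined to the ball of radius $R/2$ disjoint from $\Lat \setminus \{0\}$. Your attention to the uniformity of the constant over the compact ball and to absorbing the polynomial factor $k^{3n/2}$ into the exponential matches the paper's implicit bookkeeping.
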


So up to a $O(k^{-\infty})$, $S_k ( p_2, q_2, p_1, q_1)$ is given on a
neighborhood of the diagonal by the sum of the $f_{\ga , \ga }$ where
$\ga$ runs over $k^{-1} \Lat^*$. 

\begin{lem} We have
\begin{xalignat*}{2} 
 \sum_{\ga \in k^{-1}\Lat ^* }  f_{\ga,\ga}( p_2, q_2, p_1, q_1 ) = & 
 \Bigl( \frac{ik}{ \tau_2 - \overline \tau_1 } \Bigr)
^{n/2} \operatorname{Vol}( \Liet / \Lat )  \\  &   \times \sum_{\la \in \Lat} \exp
\Bigl( \frac{ - i \pi k }{ \tau_2 - \overline \tau _1} B( \la+ \zeta_2
- \overline \zeta _1, \la + \zeta_2 - \overline \zeta_1) \Bigr)
\end{xalignat*}
\end{lem}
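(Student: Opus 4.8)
The plan is to recognise the left-hand side as a Gaussian sum over the lattice $k^{-1}\Lat^*$ and to apply the Poisson summation formula, which transports it to a sum over the dual lattice. First I would set $\ga_1 = \ga_2 = \ga$ in the definition of $f_{\ga_2,\ga_1}$, so that the summand becomes the Gaussian
$$ f_{\ga,\ga} = \exp\Bigl( 2 i \pi k \bigl( \tfrac{\tau_2 - \overline{\tau}_1}{2} B(\ga,\ga) - B(\zeta_2 - \overline{\zeta}_1, \ga) \bigr) \Bigr). $$
Since $\tau_2 \in \Hp$ while $\overline{\tau}_1$ has negative imaginary part, the number $\tau_2 - \overline{\tau}_1$ has positive imaginary part, so the real part of the quadratic exponent equals $-\pi k \operatorname{Im}(\tau_2 - \overline{\tau}_1) B(\ga,\ga)$, which is strictly negative as $B$ is positive definite. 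This decay guarantees absolute convergence and legitimises Poisson summation.

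Next I would apply the Poisson summation formula for the Euclidean space $(\Liet, B)$, with the Riemannian measure determined by $B$. The lattice $k^{-1}\Lat^*$ has $B$-dual lattice $k (\Lat^*)^* = k\Lat$, and its covolume is $\bigl( k^n \operatorname{Vol}(\Liet/\Lat) \bigr)^{-1}$, using $\operatorname{Vol}(\Liet/\Lat^*) = \operatorname{Vol}(\Liet/\Lat)^{-1}$. Poisson summation then yields
$$ \sum_{\ga \in k^{-1}\Lat^*} f_{\ga,\ga} = k^n \operatorname{Vol}(\Liet/\Lat) \sum_{\la \in \Lat} \widehat{f}(k\la), $$
where $\widehat{f}$ denotes the $B$-Fourier transform of $\ga \mapsto f_{\ga,\ga}$.

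The core computation is then the Gaussian Fourier transform. Writing $a = k(\tau_2 - \overline{\tau}_1)$ and $c = k(\zeta_2 - \overline{\zeta}_1)$, one has $f_{\ga,\ga} = \exp\bigl( i\pi a B(\ga,\ga) - 2 i\pi B(c,\ga) \bigr)$, and the standard complex Gaussian integral gives
$$ \widehat{f}(\xi) = \Bigl( \frac{a}{i} \Bigr)^{-n/2} \exp\Bigl( \frac{-i\pi}{a} B(c + \xi, c + \xi) \Bigr). $$
Evaluating at $\xi = k\la$, the identities $c + k\la = k(\la + \zeta_2 - \overline{\zeta}_1)$ and $a = k(\tau_2 - \overline{\tau}_1)$ collapse the exponent to $\frac{-i\pi k}{\tau_2 - \overline{\tau}_1} B(\la + \zeta_2 - \overline{\zeta}_1, \la + \zeta_2 - \overline{\zeta}_1)$, while the prefactor $k^n (a/i)^{-n/2}$ simplifies to $\bigl( ik/(\tau_2 - \overline{\tau}_1) \bigr)^{n/2}$. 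Reassembling gives precisely the asserted formula.

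The step I expect to be most delicate is pinning down the branch of the square root $(a/i)^{-n/2}$. The Gaussian integral is unambiguous when $a$ is a positive multiple of $i$ (so that $a/i > 0$ is real and one takes the positive root), and one must analytically continue over the half-plane $\operatorname{Im}(a) > 0$ to check that the resulting determination of $\bigl( ik/(\tau_2 - \overline{\tau}_1) \bigr)^{n/2}$ is the one intended in the statement; a convenient check is the point $\tau_1 = \tau_2 = i$, where both sides reduce to positive real square roots. Everything else is the routine matching of prefactors together with the identification of the dual lattice and its covolume.
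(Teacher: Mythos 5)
Your proposal is correct and follows essentially the same route as the paper: Poisson summation applied to the Gaussian $f_{\ga,\ga}$, with the complex Gaussian Fourier transform (and the complex shift $\zeta_2 - \overline{\zeta}_1$) justified by analytic continuation, and the branch fixed on the half-plane $\operatorname{Im}(\tau_2 - \overline{\tau}_1)>0$. The only difference is presentational: you work coordinate-free on $(\Liet, B)$ with the dual lattice $k\Lat$ and covolume $k^{-n}\operatorname{Vol}(\Liet/\Lat)^{-1}$, while the paper chooses a basis $(\pi_i)$ of $\Lat^*$ and reduces to Poisson summation over $\Z^n$, recovering the same volume factor from the Gram determinant identity $\det\bigl(B(\ga_i,\ga_j)\bigr) = \operatorname{Vol}^2(\Liet/\Lat)$.
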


\begin{proof} 
Introduce a basis $(\pi_i)$ of $\Lat^*$. Let $\ga \in
k^{-1} \Lat^*$, write $k\ga =  \sum x_i \pi_i  $. One has
\begin{xalignat*}{2} 
f_{\ga, \ga}( p_2, q_2, p_1, q_1 ) = & \exp \Bigl(-  \Bigl( \frac{ \tau _2 -
  \overline \tau _1}{2i} \Bigr)   \Bigl( \frac{2 \pi }{k} \Bigr) B( k\ga, k\ga) - 2 i \pi  B ( \zeta_2 - \overline \zeta _1
, k\ga ) \Bigr) \Bigr)  \\ = & u (x) \exp ( -i \langle \eta, x \rangle
)  
\end{xalignat*} 
where $u$ is the complex valued function of $\R^n$ given by 
$$ u ( x) = \exp \Bigl( - \frac{1}{2}  \Bigl(\frac{ \tau _2 -
  \overline \tau _1}{i}\Bigr) \Bigl(\frac{2 \pi}{k}\Bigr)   \sum_{i,j} B( \pi_i , \pi_j ) x_i x_j
\Bigr) $$
and 
$$ \eta_i = 2 \pi B( \zeta_2 - \overline \zeta _1, \pi_i ).$$
Let $( \ga_i)$ be the dual basis of $( \pi_i)$. Then using that $B(
\ga_i, \ga_j)$ is the inverse of $B( \pi_i, \pi_j)$ and that the
determinant of $\bigl( B( \ga_i , \ga_j ) \bigr)_{i,j}$  is the square
of the volume of $\Liet / \Lat$, we prove that the Fourier transform of
$u$ is 
$$\hat{u} ( \xi) =  \Bigl( \frac{ik}{ \tau_2 - \overline \tau_1 } \Bigr)
^{n/2} \operatorname{Vol}( \Liet / \Lat ) \exp \Bigl( -\frac{1}{2}
\Bigl( \frac{ i }{
  \tau_2 - \overline \tau _1}\Bigr) \Bigl(\frac{k}{2 \pi}\Bigr)   \sum_{i,j} B( \ga_i, \ga_j ) \xi_i
\xi_j  \Bigr).$$
The Fourier transform of $v (x) = u ( x) \exp ( - i \langle \eta , x
\rangle )$ is 
$$ \hat{ v } ( \xi ) = \hat{ u } ( \xi+ \eta  )$$
for real $\eta$. This is also verified for any $\eta \in \C^n$ by
analytic prolongation. 
By Poisson's summation formula,
\begin{gather*} 
 \sum _{x \in \Z^n} v(x)   =  \sum_{\xi \in \Z^n} \hat{v} ( 2 \pi
 \xi) 
\end{gather*}
Let us compute $\hat{v} ( 2 \pi \xi )$. Since $\sum \eta_i \ga_i = 2 \pi ( \zeta_2 - \overline \zeta
_1)$, we have 
\begin{xalignat*}{2} 
\sum_{i,j} B( \ga_i, \ga_j ) (2 \pi \xi_i + \eta_i)(2 \pi \xi_j +
\eta_j ) =   & ( 2 \pi ) ^2 B( \la + \zeta_2 - \overline \zeta
_1,  \la + \zeta_2 - \overline \zeta
_1 )  
\end{xalignat*} 
where $\la = \sum \xi_i \ga_i \in \Lat $. So $\hat{v} ( 2 \pi \xi )$
which is equal to $\hat u  ( 2 \pi \xi +
\eta)$ is given by 
$$ \hat{v} ( 2 \pi \xi ) =  \Bigl( \frac{ik}{ \tau_2 - \overline \tau_1 } \Bigr)
^{n/2} \operatorname{Vol}( \Liet / \Lat ) \exp
\Bigl( \frac{ - i \pi k }{ \tau_2 - \overline \tau _1} B( \la+ \zeta_2
- \overline \zeta _1, \la + \zeta_2 - \overline \zeta_1) \Bigr) $$
which concludes  the proof. 
\end{proof}

\begin{lem} There exists $\epsilon >0$ and $C>0$ such that 
$$  \sum_{\la \in \Lat \setminus \{0\} } \Bigl| \exp
\Bigl( \frac{ - i \pi k }{ \tau_2 - \overline \tau _1} B( \la+ \zeta_2
- \overline \zeta _1, \la + \zeta_2 - \overline \zeta_1) \Bigr)
 s_{\tau_2}^k \boxtimes
\overline{s}^k_{\tau_1}  \Bigr| \leqslant C e ^{-k/C}
$$
for all $p_2$, $q_2$, $p_1$ and $q_1$ satisfying $|q_1 - q_2 |
\leqslant \epsilon$ and $|p_1 - p_2 | \leqslant \epsilon$.
\end{lem}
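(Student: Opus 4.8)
The plan is to show that the modulus of each summand, viewed as a function of $\lambda$, is a Gaussian centred at a point $\lambda_\ast$ which stays within distance $R/2$ of the origin near the diagonal, so that all the nonzero lattice points fall in its exponentially small tail; the sum over $\lambda\in\Lat\setminus\{0\}$ is then controlled by lemma \ref{lem:estim}.

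First I would compute the modulus explicitly. Set $T=\tau_2-\overline\tau_1$ (which has positive imaginary part), $w=\lambda+\zeta_2-\overline\zeta_1$, and abbreviate $\alpha=\operatorname{Re}\bigl(\tfrac{-i}{T}\bigr)=-\tfrac{\operatorname{Im}T}{|T|^2}<0$ and $\beta=\operatorname{Im}\bigl(\tfrac{-i}{T}\bigr)$. Recalling from the proof of lemma \ref{lem:negl} that $|s_{\tau_j}^k|=\exp(-k\pi\operatorname{Im}(\tau_j)\,|q_j|^2)$, using $|e^z|=e^{\operatorname{Re}z}$, and writing $w=u+iv$ for the decomposition into real and imaginary parts in $\Liet$ (so that $B(w,w)=B(u,u)-B(v,v)+2iB(u,v)$, with $B$ extended $\C$-bilinearly), the logarithm of the modulus of the summand equals
\[
\pi k\bigl(\alpha\,B(u,u)-\alpha\,B(v,v)-2\beta\,B(u,v)-\operatorname{Im}(\tau_2)\,|q_2|^2-\operatorname{Im}(\tau_1)\,|q_1|^2\bigr).
\]
This is a real quadratic polynomial in $(p_1,q_1,p_2,q_2,\lambda)$ whose coefficient of $|\lambda|^2$ is $\pi k\alpha<0$.

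The crucial point is that, after completing the square in $\lambda$, this polynomial takes the form
\[
\pi k\,\alpha\,|\lambda-\lambda_\ast|^2+\pi k\,D,
\]
where $\lambda_\ast$ and $D$ are continuous functions of the differences $\delta p=p_2-p_1$ and $\delta q=q_2-q_1$ alone, both vanishing at $\delta p=\delta q=0$. For this I must check that all the terms which a priori grow with $q_1,q_2$ — the coefficients of $|q_j|^2$ and the terms linear in $q_1,q_2$ — cancel; this cancellation is precisely the real and imaginary parts of the identity $(\alpha+i\beta)T=-i$, namely $\alpha\operatorname{Re}T-\beta\operatorname{Im}T=0$ and $\alpha\operatorname{Im}T+\beta\operatorname{Re}T=-1$. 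Equivalently, the substitution $p_2\mapsto p_2+\lambda$ multiplies the $\lambda$-th summand by the unimodular factor $e^{i\pi B(\lambda,q_2)}$ and turns it into the $0$-th one, so the whole family is governed by the case $\lambda=0$. I expect this cancellation — the uniformity of $\lambda_\ast$ and $D$ over the \emph{noncompact} base $\Liet^2$ — to be the only delicate point: without it the bound would deteriorate as $|q_1|,|q_2|\to\infty$ and could not hold uniformly in $p_1,q_1,p_2,q_2$.

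Granting this, the conclusion is routine. By continuity, choose $\epsilon>0$ so small that $|\lambda_\ast|\leqslant R/2$ whenever $|\delta p|,|\delta q|\leqslant\epsilon$, where $R=\min\{|\gamma|:\gamma\in\Lat\setminus\{0\}\}$ is as in the preceding lemma; then $\lambda_\ast$ lies in the compact set $K=\{\xi\in\Liet:|\xi|\leqslant R/2\}$, which contains no nonzero lattice point. Lemma \ref{lem:estim} (its second part, with $P=\{0\}$, after a linear change of variables carrying $\Lat$ to $\Z^n$) then provides a constant $C'>0$ with
\[
\sum_{\lambda\in\Lat\setminus\{0\}}\exp\bigl(\pi k\,\alpha\,|\lambda-\lambda_\ast|^2\bigr)\leqslant C'e^{-k/C'}
\]
uniformly for $\lambda_\ast\in K$. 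Finally, shrinking $\epsilon$ once more so that $\pi D\leqslant 1/(2C')$ on $\{|\delta p|,|\delta q|\leqslant\epsilon\}$ absorbs the remaining factor $e^{\pi kD}$, which yields a total bound $C'e^{-k/(2C')}$ and hence the claim with $C=2C'$.
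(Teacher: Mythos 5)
Your proof is correct and follows essentially the same route as the paper's: there too the modulus of each summand is computed explicitly and found to equal $e^{-\frac12(a|\mu|^2+2bB(\mu,q)+c|q|^2)}$ with $\mu=\la+p_2-p_1$ and $q=q_2-q_1$ (exactly your cancellation of the terms growing in $q_1,q_2$, with $\pi\alpha=-a/2$ and $\la_\ast=-(\delta p+\tfrac{b}{a}\,\delta q)$), after which one completes the square in $\mu$ and applies lemma \ref{lem:estim} with $P=\{0\}$, choosing $\epsilon$ so the Gaussian centre stays within $R/2$ of the origin. The only cosmetic difference is that the paper verifies $\tfrac{a}{2}\bigl|\mu+\tfrac{b}{a}q\bigr|^2+\bigl(c-\tfrac{b^2}{a}\bigr)|q|^2\geqslant 0$ for small $\epsilon$ and keeps $e^{-\frac{a}{4}k|\mu+\frac{b}{a}q|^2}$, where you instead absorb the factor $e^{\pi kD}$ by shrinking $\epsilon$ once more.
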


\begin{proof} 
With a straightforward computation, we obtain that
$$ \Bigl| \exp
\Bigl( \frac{ - i \pi  }{ \tau_2 - \overline \tau _1} | \la+ \zeta_2
- \overline \zeta _1|^2 \Bigr) s_{\tau_2} \boxtimes
\overline{s}_{\tau_1}  \Bigr|^2 = e^{- ( a |\mu|^2 + 2 b B(\mu, q) + c |q|^2)}
$$
where $q = q_2 - q_1$ $ \mu = \la +p_2 - p_1$ and $a$, $b$ and $c$ are the real numbers
$$ a = \frac{ - i \pi }{ | \tau _2 - \overline \tau _1 |^2} ( \tau _2 -
\overline \tau _2 + \tau_1 - \overline \tau _1 ) $$
$$  b = \frac{ i \pi }{ | \tau _2 - \overline \tau _1 |^2} (
\overline \tau_2 \overline \tau_1 - \tau_2 \tau_1 )   $$
$$  c = \frac{ -i \pi }{ | \tau _2 - \overline \tau _1 |^2} ( |
\tau_2|^2 ( \tau_1 - \overline \tau_1) + | \tau_1 |^2 ( \tau_2 -
\overline \tau_2 ))$$
Write 
$$ a |\mu|^2 + 2 b B(\mu, q) + c |q|^2 = a | \mu + \tfrac{b}{a} q |^2
+ ( c - \tfrac{b^2}{a} ) |q|^2 .$$ 
Using that $a$ is positive, one proves that there exists $\epsilon >0$
such that 
$$ |q| \leqslant \epsilon \text{ and } |p_2 - p_1 | \leqslant \epsilon
\Rightarrow  \tfrac{a}{2} | \mu + \tfrac{b}{a} q |^2
+ \bigl( c - \tfrac{b^2}{a} \bigr) |q|^2 \geqslant 0  $$
for any non-vanishing $\la \in \Lat$. So 
$$  \sum_{\la \in \Lat \setminus \{0\} } \Bigl| \exp
\Bigl( \frac{ - i k  \pi  }{ \tau_2 - \overline \tau _1} | \la+ \zeta_2
- \overline \zeta _1|^2 \Bigr) s^k_{\tau_2} \boxtimes
\overline{s}^k_{\tau_1}  \Bigr| \leqslant  \sum_{\la \in \Lat \setminus
  \{0\} } e^{- \frac{a}{4} k   \bigl| \la + p_2 - p_1 + 
  \tfrac{b}{a} q \bigr|^2 } $$
when $|q|$ and $|p_2 - p_1|$ are smaller than $\epsilon$. We conclude with lemma \ref{lem:estim}. 
\end{proof}

Collecting together the previous lemmas, we obtain 
\begin{xalignat*}{2} 
S_k  & ( p_2, q_2, p_1, q_1) =   \Bigl( \frac{k}{2 \pi } \Bigr)^n \Bigl(
\frac{ 2 i \pi }{ \tau_2 - \overline \tau _1} \Bigr)^{n/2}
\operatorname{Vol} ( \Liet / \Lat )  \\  & \times \exp \Bigl( - \frac{i \pi k }{
  \tau _2 - \overline \tau _1} B( \zeta_2 - \overline \zeta_1, \zeta_2
 - \overline \zeta _1) \Bigr)  (s_{\tau_2}^k \Om^{1/2}) \boxtimes
(\overline{s}^k_{\tau_1} \overline{\Om}^{1/2}) + R_k ( p_2, q_2, p_1, q_1)
\end{xalignat*}
where the remainder satisfies for some $\epsilon >0$ and $C>0$, 
$$ |q_1 - q_2 |, | p_1 - p_2 |  \leqslant \epsilon \Rightarrow | R_ k ( p_2, q_2, p_1,
q_1) | \leqslant C e^{-k/C} .$$
Using lemma \ref{lem:E_et_f0}, this proves theorem \ref{theo:FIO}.

\section{Asymptotic properties of the quantum representations} \label{sec:asympt-prop-quant}

\subsection{Definitions}

Let $M$ be a symplectic compact manifold with a positive complex structure $j$, a prequantization
bundle $L$ and a half-form bundle $\delta$. 
Consider a symplectomorphism $\Phi : M \rightarrow M$ together with 
automorphisms $\Phi_L$ and $\varphi$ of the bundles $L$ and
$\delta$ respectively which lift $\Phi$. We assume that $\Phi_L$ preserves the
connection and metric of $L$.


Let $\Hilb_k $ be the space of holomorphic sections of
 $L^k \otimes \delta$.
Consider a family  $(S_k)$ such that for every $k$, $S_k$ is an operator $\Hilb _k
 \rightarrow \Hilb _k $. The Schwartz kernel of $S_k$ is  a holomorphic section of  
$$ (L^k \otimes \delta ) \boxtimes (\overline{L}^k \otimes \overline{\delta})
\rightarrow M^2,$$
where $M^2$ is endowed with the complex structure $(j, -j)$.
We say that $(S_k)$
is a Fourier integral operator associated to $\Phi_L$ with symbol $\varphi$ if the Schwartz kernel sequence is of the form 
\begin{gather*}
 S_k(x,y) =  \Bigl( \frac{k}{2\pi} \Bigr)^{n} F^k(x,y) g(x,y,k) + O
(k^{-\infty}) \end{gather*}
where 
\begin{itemize}
\item 
$F$ is a section of  $L \boxtimes \bar{L}
\rightarrow M^2$ such that  $\| F(x,y)
\| <1 $ if $x \neq \Phi(y) $, 
$$ F (\Phi ( x) ,x) = \Phi_L(u)  \otimes \bar{u}, \quad \forall u \in L_x \text{ such that }
  \| u \| = 1, $$
and $ \bar{\partial} F \equiv 0 $
modulo a section vanishing to any order along the graph of $\Phi ^{-1}$.
\item
  $g(.,k)$ is a sequence of sections of  $ \delta \boxtimes
  \bar{\delta}  \rightarrow M^2$ which admits an asymptotic expansion in the $\Ci$
  topology of the form 
$$ g(.,k) = g_0 + k^{-1} g_1 + k^{-2} g_2 + ...$$
whose coefficients satisfy $\bar{\partial} g_i \equiv 0  $
modulo a section vanishing to any order along the graph of $\Phi ^{-1}$.
\item  The restriction to the diagonal of the leading
coefficient $g_0$ is equal to $\varphi$, if we identify $\delta
\otimes \overline{\delta}$ with $\Hom ( \delta , \delta)$ using
the metric of $\delta$. 
\end{itemize}

Let us explain the relation with the Fourier integral operators of
section \ref{sec:def_FIO_1}. Let $\Phi(j)$ be the complex structure obtained by pushing
forward $j$ with $\Phi$. Consider a half-form bundle $\delta'$ of the
complex manifold $(M, \Phi
(j))$ together with an isomorphism $ \varphi_1 : \delta \rightarrow
\delta'$ whose square is equal to   
$$ \varphi_{1,x}^{\otimes 2} = ((T_x \Phi) ^*)^{-1} : \wedge
^{\operatorname{top},0}_j T_x ^* M \rightarrow  \wedge
^{\operatorname{top},0}_{\Phi (j)} T_{\Phi(x)} ^* M
$$
Then the isomorphisms $\Phi_L$ and $\varphi_{1}$ induce a linear isomorphism $\Phi_*$ from $\Hilb_k$ to the space
$\Hilb'_k$ consisting of the sections of $L^k \otimes \delta'$
holomorphic with respect to $\Phi(j)$. Now suppose that 
$$ S_k = T_k \circ \Phi_* : \Hilb_k \rightarrow \Hilb_k, \qquad k=1,2,
\ldots $$
for an endomorphism $T_k: \Hilb_k' \rightarrow \Hilb_k$. Then
comparing the definition of section
\ref{sec:def_FIO_1} with the previous one, we prove that $(T_k)$ is a Fourier integral
operator  with symbol $\varphi_2$ in the sense of section
\ref{sec:def_FIO_1} if and only if $(S_k)$ is a Fourier integral
operator associated to $\Phi_L$ with symbol $\varphi= \varphi_2 \circ
\varphi_1$. 
This applies to the representation $R_2$ defined in
corollary \ref{cor:def_R2}. Indeed for any $(A,e) \in
\Mo_2$, $R_2 (A,e)$ is the composition of a pull-back with the map $\Psi_{A
  \tau,\tau}$, which is a Fourier integral operator by theorem \ref{theo:FIO}. 

Since the half-form bundle $\delta_\tau$ is the trivial bundle, we can
identify its automorphisms with functions on $\mtore ^2$, the
correspondence being given by $\varphi_x ( \Om^{1/2}) = f(x)
\Om^{1/2}$. We use this convention in the sequel for the symbols of
the Fourier integral operators.

\begin{theo} \label{theo:FIO_Modul}
For any $\tau$ and $(A,e) \in \Mo_2$ the sequence $$ R_2(A,e) :
H^0_\tau ( \mtore^2 , L^k \otimes \delta ) \rightarrow H^0_\tau (
\mtore^2 , L^k \otimes \delta ) , \qquad k = 1, 2, \ldots $$
is a Fourier integral operator associated to the prequantum lift of
$A$ to $L$. Its symbol is the constant function equal to  
$$ \si (A,e) = e ( \tau) \Biggl( \frac{A \tau - \overline{A \tau }}{ \tau -
  \overline{A \tau}} \Biggr)^{n/2}. $$
\end{theo}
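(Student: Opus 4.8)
The plan is to recognize $R_2(A,e)$ as a composition of the form $T_k\circ\Phi_*$ appearing in the discussion opening this section, with $T_k$ one of the identification maps $\Psi$ treated in theorem \ref{theo:FIO}, and then to read off the symbol. First I would rewrite the operator. Corollary \ref{cor:def_R2} gives $R_2(A,e)=\varphi_{(A,e)^{-1},A^{-1}\tau}^{*}\circ\Psi_{\tau,A^{-1}\tau}$, but the commuting square of the proposition preceding that corollary lets me transport the pull-back to the right, so that
$$ R_2(A,e)=\Psi_{A\tau,\tau}\circ\Phi_{*},\qquad \Phi_{*}:=\varphi_{(A,e)^{-1},\tau}^{*}\colon H^0_\tau(\mtore^2,L^k\otimes\delta_\tau)\to H^0_{A\tau}(\mtore^2,L^k\otimes\delta_{A\tau}). $$
Here $\Phi_*$ is the map induced on sections by the base symplectomorphism $\varphi_A$ (which pushes $j_\tau$ onto $j_{A\tau}$), by the prequantum lift of $A$ to $L$, and by the $\Mo_2$-action on $\delta$; while $\Psi_{A\tau,\tau}$ is the identification of the two quantum spaces attached to $j_{A\tau}$ and $j_\tau$.

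Next I would match this with the set-up of the present section, taking $M=\mtore^2$, $j=j_\tau$, $\delta=\delta_\tau$, $\Phi=\varphi_A$, hence $\Phi(j)=j_{A\tau}$ and $\delta'=\delta_{A\tau}$, and $\Phi_L$ the prequantum lift of $A$. The point to verify is that the half-form part of $\Phi_*$ is an isomorphism $\varphi_1\colon\delta_\tau\to\delta_{A\tau}$ with $\varphi_1^{\otimes2}=((T\varphi_A)^{*})^{-1}$. From $\varphi_A^{*}\Om_{A\tau}=(-c\tau+d)^{-n}\Om_\tau$ (equation (\ref{eq:1})) this square sends $\Om_\tau$ to $(-c\tau+d)^{n}\Om_{A\tau}$; since the $\Mo_2$-action multiplies $\omh$ by $e(\tau)$ and $e(\tau)^2=(-c\tau+d)^{n}$, the lift $\varphi_1$ realizes precisely the branch $\varphi_1(\Om^{1/2})=e(\tau)\Om^{1/2}$. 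Thus, in the trivialization $\varphi_x(\Om^{1/2})=f(x)\Om^{1/2}$ fixed before the statement, $\varphi_1$ is the constant function $e(\tau)$.

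With the decomposition in place, theorem \ref{theo:FIO} says that $T_k:=\Psi_{A\tau,\tau}$ is a Fourier integral operator in the sense of section \ref{sec:def_FIO_1} with symbol $\varphi_2=\varphi_{A\tau,\tau}$, which in the same trivialization is the constant $\bigl((A\tau-\overline{A\tau})/(\tau-\overline{A\tau})\bigr)^{n/2}$. The general relation recalled at the start of this section then applies verbatim: since $R_2(A,e)=T_k\circ\Phi_*$ with $T_k$ a Fourier integral operator of symbol $\varphi_2$ and $\Phi_*$ induced by $\Phi_L$ and $\varphi_1$, the sequence $R_2(A,e)$ is a Fourier integral operator associated to the prequantum lift $\Phi_L$ of $A$, with symbol $\varphi_2\circ\varphi_1$. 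Composing the two constant functions yields $\si(A,e)=e(\tau)\bigl((A\tau-\overline{A\tau})/(\tau-\overline{A\tau})\bigr)^{n/2}$, as claimed.

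The analytic substance is already carried by theorem \ref{theo:FIO}, so no new estimate is needed and the remaining work is bookkeeping. The step I expect to demand the most care is the identification of $\Phi_*$ with the framework map together with the choice of square-root branch: one must track which complex structure, $j_\tau$ or $j_{A\tau}$, occupies source and target at each stage, justify through the commuting square that the reordering producing $\Psi_{A\tau,\tau}$ is legitimate, and confirm that the metaplectic datum $e$ of $(A,e)\in\Mo_2$ selects the square root of $(-c\tau+d)^{n}$ giving $\varphi_1\leftrightarrow e(\tau)$ and not its opposite. Once these conventions are fixed, the composition of symbols and the final simplification are immediate.
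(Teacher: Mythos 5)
Your proposal is correct and is essentially the paper's own argument: the paper likewise rewrites $R_2(A,e)$ as the composition of a pull-back $\Phi_*$ (prequantum lift plus the $\Mo_2$-action on the half-forms) with the identification map $\Psi_{A\tau,\tau}$, invokes theorem \ref{theo:FIO} to see that $\Psi_{A\tau,\tau}$ is a Fourier integral operator with symbol $\varphi_{A\tau,\tau}$, and applies the general equivalence stated for $S_k=T_k\circ\Phi_*$ to get the symbol $\varphi_2\circ\varphi_1 = e(\tau)\bigl(\tfrac{A\tau-\overline{A\tau}}{\tau-\overline{A\tau}}\bigr)^{n/2}$. Your write-up merely makes explicit the bookkeeping the paper leaves implicit, namely the commuting-square reordering from corollary \ref{cor:def_R2} and the identification of the half-form part of $\Phi_*$ with the constant $e(\tau)$ via equation (\ref{eq:1}).
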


Applying theorem \ref{theo:FIO} with $\tau_1 = \tau_2$, the representation of the Weyl group $W$ is also given by Fourier
integral operators. 

\begin{theo} \label{theo:FIO_Weyl}
For any $\tau$ and $w \in W$ the sequence $$ w :
H^0_\tau ( \mtore^2 , L^k \otimes \delta ) \rightarrow H^0_\tau (
\mtore^2 , L^k \otimes \delta ) , \qquad k = 1, 2, \ldots $$
is a Fourier integral operator associated to the prequantum lift of
$w$ to $L$. Its symbol is the constant function equal to 1.
\end{theo}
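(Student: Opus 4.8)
The plan is to read this off from the reduction to the Fourier integral operators of section~\ref{sec:def_FIO_1} set up just before theorem~\ref{theo:FIO_Modul}, applied to the symplectomorphism $\Phi=w$ of $\mtore^2$, exactly as the sentence preceding the statement suggests. First I would record the two features that make the Weyl case degenerate compared with the modular one. By the opening remark of section~\ref{sec:alternating-sections} the diagonal action of $w$ is holomorphic for every $j_\tau$; hence the pushed-forward complex structure $\Phi(j_\tau)$ equals $j_\tau$, so $w$ already maps $H^0_\tau(\mtore^2,L^k\otimes\delta)$ into itself and no change of $\tau$ is needed. Moreover the relevant lifts are the trivial ones (sections~\ref{sec:symplectic-datas} and~\ref{sec:geom-quant-repr}): the prequantum lift $w_L$ preserves the connection and the metric of $L$, and the trivial lift to $\delta_\tau$ fixes the constant section $\omh$.

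In the notation of that reduction I would take $\delta'=\delta_\tau$, which is legitimate since $\Phi(j_\tau)=j_\tau$, and let $\varphi_1\colon\delta\to\delta$ be the canonical half-form isomorphism with $\varphi_1^{\otimes 2}=((Tw)^*)^{-1}$. Since $w$ acts on $\Liet_\C$ through the complexification of an isometry, $(Tw)^*$ acts on $\wedge^{\operatorname{top}}\Liet_\C^*$ as the scalar $\det(w)^{-1}=\pm 1$, so $\varphi_1$ is a fixed, $k$-independent fourth root of unity regarded as an endomorphism of the line $\delta$. Letting $\Phi_*$ be the isomorphism built from $w_L$ and $\varphi_1$, a direct comparison shows that the honest operator $w$, which uses $w_L$ together with the trivial $\delta$-lift, equals $T_k\circ\Phi_*$, where $T_k$ is multiplication by the constant $\varphi_1^{-1}$ on the $\delta$-factor. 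Crucially both operators use the same $L$-lift $w_L$, so the entire discrepancy is this one scalar half-form factor.

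It then remains to identify $T_k$ as a Fourier integral operator in the sense of section~\ref{sec:def_FIO_1} comparing $(j_\tau,\delta)$ with itself, and here I would invoke theorem~\ref{theo:FIO} with $\tau_1=\tau_2=\tau$. The map $\Psi_{\tau,\tau,k}$ is the identity, and the theorem presents its Schwartz kernel as a Fourier integral operator whose symbol $\varphi_{\tau,\tau}$ is the constant $1$. Since $T_k=\varphi_1^{-1}\,\Psi_{\tau,\tau,k}$, multiplying this kernel by the constant $\varphi_1^{-1}$ leaves the section $E$ untouched and only rescales the leading coefficient, so $T_k$ is a Fourier integral operator with symbol $\varphi_1^{-1}$. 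Applying the symbol rule $\varphi=\varphi_2\circ\varphi_1$ of the reduction, $w=T_k\circ\Phi_*$ is a Fourier integral operator associated to $w_L$ with symbol $\varphi_1^{-1}\circ\varphi_1=\id$, that is the constant function $1$, which is the assertion.

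The argument is short, so the only genuinely delicate point is the bookkeeping of the two half-form lifts: I must check that the trivial lift entering the honest $W$-action and the canonical lift $\varphi_1$ entering $\Phi_*$ differ by a single $k$-independent constant. Once this is isolated the operator $T_k$ collapses to a scalar multiple of the identity, its Fourier integral structure is furnished verbatim by theorem~\ref{theo:FIO} at $\tau_1=\tau_2$, and the cancellation $\varphi_1^{-1}\circ\varphi_1=1$ is exactly what makes the symbol equal to $1$ independently of $w$.
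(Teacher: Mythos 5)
Your proposal is correct and follows essentially the same route as the paper, which proves this theorem in one line by invoking the reduction $S_k = T_k \circ \Phi_*$ set up before theorem \ref{theo:FIO_Modul} and applying theorem \ref{theo:FIO} with $\tau_1 = \tau_2$ (so that $T_k$ is a scalar multiple of $\Psi_{\tau,\tau,k} = \id$, an FIO with symbol $1$). Your careful bookkeeping of the constant $\varphi_1$ with $\varphi_1^{\otimes 2} = ((Tw)^*)^{-1} = \det(w)$ and the cancellation $\varphi_1^{-1} \circ \varphi_1 = 1$ is exactly the detail the paper leaves implicit, and it is handled correctly (note only that $(Tw)^*$ acts as $\det(w)$ rather than $\det(w)^{-1}$, which is harmless since $\det(w) = \pm 1$).
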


\subsection{Metaplectic group}

Let $S$ be a symplectic vector space with a positive compatible
complex structure $j$. Denote by $E = \ker ( \id + i j )$  the space
of vectors with type $(1, 0)$. Let $\Sp (S)$ be the symplectic group
of $S$. Using the complex structure we introduce a group $\Mp (S,j)$,
isomorphic to the metaplectic group of $S$. $\Mp (S,j)$ consists of the pairs
$(A,z)$ such that $A \in \Sp (S)$ and $z$ is a complex number satisfying 
$$ z^2 = \det ( g^{-1}  \pi_{E, gE} : E \rightarrow E) .$$
Here $\pi_{E,gE}$ is the projection from $E$ onto $gE$ with kernel
$\overline E$.  The product of $\Mp (S,j)$  is
determined by the condition that the projection onto the symplectic
group is a group morphism and that the identity is the pair $(\id,
1)$. We shall also consider an extension $\Mp _2 (S, j)$ by $\Z _4 =
\{ \pm 1, \pm i \}$  of the symplectic group. It is defined as the set of pairs $(A,z)$ such that $  z^4 = \det ^ 2 ( g^{-1}  \pi_{E, gE} : E \rightarrow E)$. The product is determined by the condition that the map $$\Mp (S ,j) \times \Z_4  \rightarrow \Mp _2 (S,j)$$ sending $(A,z,u)$ into $(A, zu)$ is a group morphism.

We apply these constructions to $S = \Liet \oplus \Liet = \R^2 \otimes
\Liet $ with the complex structure given by some $\tau$ in the upper half-plane. Then the symbols of the operators defining the representation $R_2$ belongs to the metaplectic group. 
\begin{prop} \label{prop:mor-symb}
We have a group morphism from $\Mo_2$ to $\Mp ( \Liet \oplus \Liet , j_{\tau})$ sending $(A, e )$ into
$ ( A \otimes \id_\Liet ,  \sigma (A,e))$ with 
  $$ \sigma  (A,e)  = e ( \tau) \bigl( \tfrac{A \tau - \overline{A \tau }}{ \tau -
  \overline{A \tau}} \bigr)^{n/2} $$
\end{prop}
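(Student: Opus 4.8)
The plan is to separate the statement into its two components: first, that $(A\otimes\id_\Liet,\sigma(A,e))$ is a genuine element of $\Mp(\Liet\oplus\Liet,j_\tau)$, i.e. that $\sigma(A,e)^2=\det\bigl((A\otimes\id_\Liet)^{-1}\pi_{E,(A\otimes\id_\Liet)E}\bigr)$ with $E$ the space of vectors of type $(1,0)$ for $j_\tau$; and second, that the assignment is a group morphism. Since $S=\R^2\otimes\Liet$ and $j_\tau$ is the tensor product of the rank one complex structure on $\R^2$ by $\id_\Liet$, while $g:=A\otimes\id_\Liet$ respects this factorisation, the whole computation reduces to the case $\Liet=\R$: there $E=E_0\otimes\Liet_\C$ and $g^{-1}\pi_{E,gE}$ acts as a scalar on the line $E_0$, so the determinant over $E$ is the $n$-th power of that scalar.

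For membership I would work in the coordinates of lemma \ref{lem:norm_can_bundle}. With $\zeta=p+\tau q$ the line $E_0=T^{1,0}$ is spanned by the $\partial_\zeta$-direction $e=(-\bar\tau,1)$ and $\overline{E}_0$ by $\bar e=(-\tau,1)$. Writing $e=\lambda\,g(e)+\nu\,\bar e$ in the splitting $gE_0\oplus\overline{E}_0$ and applying $g^{-1}$ shows that $g^{-1}\pi_{E_0,gE_0}$ is multiplication by
\[
\lambda=\frac{\tau-\bar\tau}{(\tau-\overline{A\tau})\,(-c\bar\tau+d)} .
\]
On the other hand $\det A=1$ gives the elementary identity $A\tau-\overline{A\tau}=(\tau-\bar\tau)\big/\bigl((-c\tau+d)(-c\bar\tau+d)\bigr)$, which together with $e(\tau)^2=(-c\tau+d)^n$ yields $\sigma(A,e)^2=\lambda^n$. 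This is exactly the determinant of $g^{-1}\pi_{E,gE}$, so the image lies in $\Mp(\Liet\oplus\Liet,j_\tau)$.

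The easy half of the morphism property is immediate: the covering map $\Mp(\Liet\oplus\Liet,j_\tau)\to\Sp(\Liet\oplus\Liet)$ is a group morphism and $A\mapsto A\otimes\id_\Liet$ is a group morphism on $\Mo$, so $\Phi(A,e)\Phi(A',e')$ and $\Phi\bigl((A,e)(A',e')\bigr)$ have the same image in $\Sp$ and thus differ only by the kernel $\{\pm1\}$ of that covering. To fix this sign I would argue by continuity, in the spirit of the comparison of $\Mo_\infty$ and $\Mo_4$. Replacing $\Sl(2,\Z)$ by $\Sl(2,\R)$ defines an extension $\Mo_2^{\R}$ and a continuous extension $\Phi^{\R}\colon\Mo_2^{\R}\to\Mp(\Liet\oplus\Liet,j_\tau)$, the computation above never using integrality. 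Because the product in $\Mp$ is by definition the one obtained by continuity from the identity, the discrepancy $\epsilon$ defined by $\Phi^{\R}(g)\Phi^{\R}(h)=\epsilon(g,h)\Phi^{\R}(gh)$ is locally constant on $\Mo_2^{\R}\times\Mo_2^{\R}$ and equals $1$ at $(\id,\id)$. A monodromy computation along the loop $\operatorname{SO}(2)$, onto which $\Sl(2,\R)$ retracts, shows that $\Mo_2^{\R}$ is connected when $n$ is odd, so $\epsilon\equiv1$; when $n$ is even the automorphy cocycle $(-c\tau+d)^{n/2}$ splits the cover as $\Sl(2,\R)\times\{\pm1\}$, and I would conclude by combining connectedness of the identity component with the value $\sigma(\id,-1)=-1$, which sends the central $(\id,-1)$ to the nontrivial central element of the metaplectic group. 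Restricting $\Phi^{\R}$ to $\Mo_2$ then gives the proposition.

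The routine part is the membership identity, pure linear algebra together with the fractional–linear identity for $A\tau-\overline{A\tau}$, and the morphism property is automatic up to a sign. The real obstacle is fixing that sign: it forces one to view $\Mp(\Liet\oplus\Liet,j_\tau)$ concretely as the continuous double cover and to treat the parity of $n$ separately, which is precisely the point where passing to the extension $\Mo_2$ of the modular group becomes essential.
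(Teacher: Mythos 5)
Your proposal is correct, and its skeleton is the same as the paper's: first verify the membership identity $\sigma(A,e)^2=\det\bigl(g^{-1}\pi_{E,gE}\bigr)$ for $g=A\otimes\id_\Liet$, then obtain the morphism property by extending to $\Sl(2,\R)$ and fixing the sign ambiguity by continuity. The two steps are, however, implemented differently. For membership, the paper does not recompute anything: it observes that $\pi_{E,gE}^*\circ(g^{-1})^*$ acts on $\wedge^{\operatorname{top}}E^*$ by the determinant in question, and then reads off the two factors from results already in hand --- the factor $e^2(\tau)$ from equation (\ref{eq:1}) ($\varphi_A^*\Om_{A\tau}=(-c\tau+d)^{-n}\Om_\tau$) and the factor $\bigl(\frac{A\tau-\overline{A\tau}}{\tau-\overline{A\tau}}\bigr)^n$ from lemma \ref{lem:half_form_morphism}. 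You instead diagonalize $g^{-1}\pi_{E,gE}$ from scratch on the rank-one model: your eigenvalue $\lambda=\frac{\tau-\bar\tau}{(\tau-\overline{A\tau})(-c\bar\tau+d)}$, combined with your fractional-linear identity $A\tau-\overline{A\tau}=(\tau-\bar\tau)/\bigl((-c\tau+d)(-c\bar\tau+d)\bigr)$, is exactly the quantity $d(A,\tau)$ of equation (\ref{eq:dAtau}), which the paper records only \emph{after} the proposition as a consequence of its proof; so your route reproves (\ref{eq:dAtau}) directly and is self-contained, at the cost of redoing linear algebra the paper recycles. For the morphism property, the paper's proof is a single sentence (``extend to $\Sl(2,\R)$ and use a continuity argument''), and your write-up supplies the actual content: the discrepancy cocycle $\epsilon$ is locally constant with value $1$ at the identity; the monodromy of the continuous square root of $(-c\tau+d)^n$ along the retract $j(\U)$, where $-c\tau+d=u^{-1}$ at $\tau=i$ (the same computation the paper performs in constructing $\Psi:\Mo_\infty\to\Mo_4$), shows $\Mo_2^\R$ is the connected double cover precisely when $n$ is odd, killing $\epsilon$ outright; and for $n$ even the cocycle $(-c\tau+d)^{n/2}$ splits $\Mo_2^\R\cong\Sl(2,\R)\times\{\pm1\}$, after which evaluating $\epsilon$ on each of the four components at the central representatives $(\id,\pm1)$, using $\sigma(\id,-1)=-1$, finishes the argument. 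This parity analysis is a genuine completion of the paper's terse proof, and it correctly isolates the only delicate point, namely that for $n$ odd the continuous determination of $\bigl(\frac{A\tau-\overline{A\tau}}{\tau-\overline{A\tau}}\bigr)^{n/2}$ (well defined since the base point lies in the right half-plane) is what makes $\Phi^\R$ continuous.
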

\begin{proof}
Observe that for any $g \in \Sp(S)$, the endomorphism 
$$    \pi_{E, gE}^* \circ ( g^{-1}) ^* :  \wedge ^{\operatorname{top}} E ^* \rightarrow \wedge ^{\operatorname{top}} (gE) ^* \rightarrow \wedge ^{\operatorname{top}} E ^*  $$
is the multiplication by $\det ( g^{-1}  \pi_{E, gE} : E \rightarrow
E) $. 

Let us apply this to $g =  A \otimes \id_\Liet$. By equation
(\ref{eq:1}) and the condition $e(\tau)^2 = ( - c\tau + d)^n$, the
pull-back by $g^{-1}$ is multiplication by $e^2 ( \tau)$.  By lemma
\ref{lem:half_form_morphism}, $\pi_{E, gE}^*$ is the multiplication by $\bigl( \frac{A \tau - \overline{A \tau }}{ \tau -
  \overline{A \tau}} \bigr)^{n}$. This implies that $( A \otimes \id_\Liet ,  \sigma (A,e))$ belongs
to the metaplectic group $\Mp ( \Liet \oplus \Liet , j_{\tau})$. 

 One shows that the map is a group morphism by extending it to the
 group defined as $\Mo_2$ by replacing $\Sl (2 , \Z) $ with $\Sl
 (2,\R)$ and using a continuity argument. 
\end{proof}
For any $A \in \Sl (2, \R)$, let $d(A, \tau) = \det (A ^{-1} \pi_{E, AE} : E \rightarrow E)$ with $E$ the complex polarization determined by the complex structure $p + \tau q$. Then in the proof of the previous proposition we showed that
\begin{gather} \label{eq:dAtau}
 d(A, \tau) = (-c \tau + d )  \tfrac{A \tau - \overline{A \tau }}{ \tau -
  \overline{A \tau}}. 
\end{gather}
We will use this equation several times in the sequel.
Assume that the rank of $G$ is even, so $n = 2p$. We have  a morphism from $\Mo $ into $\Mo _2$ sending $A$ into $(A, (-c\tau + d )^p)$. Composed with the morphism provided by proposition \ref{prop:mor-symb}, we obtain the group morphism
\begin{gather} \label{eq:mor_pair}
 \Mo \rightarrow \Mp ( \Liet \oplus \Liet , j_{\tau}), \qquad A \rightarrow (A  \otimes \id_\Liet , d(A, \tau) ^p ) .
\end{gather}
Assume now that the rank of $G$ is odd, $n= 2 p +1$. Introduce the subgroup $\Mp ( \Z ,  \tau)$ of $\Mp ( \R^2, \tau)$ consisting of the pairs $(A, z) \in \Mo \times \C^*$ such that $ z^2 = d (A, \tau)$. Using again (\ref{eq:dAtau}), we prove that this group is isomorphic to $\Mo_2$, the isomorphism being given by
$$ ( A, z)  \in \Mp ( \Z , \tau ) \rightarrow \Bigl( A, z ( -c \tau + d ) ^ p \bigl( \tfrac{A \tau - \overline{A \tau }}{ \tau -
  \overline{A \tau}} \bigr)^{-1/2} \Bigr) \in \Mo _2
$$
Finally composing this morphism with the one of proposition \ref{prop:mor-symb}, we obtain the group morphism
\begin{gather} \label{eq:mor_impair}
 \Mp ( \Z, \tau) \rightarrow  \Mp ( \Liet \oplus \Liet , j_{\tau}), \qquad (A,z) \rightarrow (A  \otimes \id_\Liet , z d(A, \tau) ^p ) .
\end{gather}

Considering the representation of the Weyl group, we obtain a morphism
into the extension $\Mp_2 ( \Liet \oplus \Liet , j_{\tau})$  of the symplectic group.
\begin{prop}
We have a group morphism from $W$ to $\Mp_2 ( \Liet \oplus \Liet , j_{\tau})$ sending $w$ into $( \id_{\R^2} \otimes w , 1)$ 
\end{prop}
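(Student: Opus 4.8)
The plan is to exploit the fact that every $g := \id_{\R^2} \otimes w$ is holomorphic for the complex structure $j_\tau$, so that all the symplectomorphisms involved lie in the unitary subgroup stabilising the $(1,0)$-space $E = \ker(\id + i j_\tau) \cong \Liet_\C$, where the determinant entering the definition of $\Mp_2$ becomes a genuine character.

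First I would check that $(\id_{\R^2} \otimes w, 1)$ really is an element of $\Mp_2(\Liet \oplus \Liet, j_\tau)$. Under the identification $(p,q) \mapsto p + \tau q$ of $\Liet^2$ with $\Liet_\C$, the map $g$ sends $p + \tau q$ to $w(p) + \tau w(q) = w(p+\tau q)$; hence $g$ is the $\C$-linear extension $w_\C$ of $w$ and in particular preserves $E$ and $\overline{E}$. Consequently $gE = E$, the projection $\pi_{E, gE}$ is the identity of $E$, and $\det(g^{-1}\pi_{E,gE}: E \to E) = \det_\C((w^{-1})_\C) = \det_\R(w^{-1}) = \det(w) = (-1)^{\ell(w)}$, the last equality because $(-1)^{\ell}$ is the determinant character of $W \subset O(\Liet)$. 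Its square is $1$, so $1 = 1^4 = \det^2(g^{-1}\pi_{E,gE})$ and the pair indeed satisfies the defining relation of $\Mp_2$. I would stress here that $(\id_{\R^2}\otimes w, 1)$ generally does not lie in $\Mp$ itself: when $\ell(w)$ is odd one has $\det(g^{-1}|_E) = -1$, so there is no square root of it equal to $1$, and it is precisely the passage to the $\Z_4$-extension that makes the uniform choice $z=1$ legitimate.

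For the morphism property, the key observation is that over the subgroup of $\Sp(\Liet \oplus \Liet)$ preserving $E$, the function $g \mapsto \det(g^{-1}\pi_{E,gE}) = \det_\C(g^{-1}|_E)$ is multiplicative, being the restriction of the determinant character of the unitary group $\U(E)$. I would therefore argue, exactly as in the continuity argument closing the proof of Proposition \ref{prop:mor-symb}, that the product of $\Mp(\Liet \oplus \Liet, j_\tau)$ (and hence of $\Mp_2$, through the morphism $\Mp \times \Z_4 \to \Mp_2$) restricts to componentwise multiplication of the second coordinates over the connected group $\U(E)$: the ratio between the $\Mp$-product's $z$-component and the naive product $z_1 z_2$ is a continuous $\{\pm 1\}$-valued function on the connected double cover of $\U(E)$, hence constant, and equal to $1$ at the identity.

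Finally I would assemble the pieces. Writing each $(\id_{\R^2}\otimes w_i, 1)$ as the image under $\Mp \times \Z_4 \to \Mp_2$ of a pair $(\id_{\R^2}\otimes w_i, z_i, u_i)$ with $z_i^2 = (-1)^{\ell(w_i)}$ and $z_i u_i = 1$, componentwise multiplication in $\Mp$ gives the product $(\id_{\R^2}\otimes w_1 w_2, z_1 z_2)$, which is a legitimate $\Mp$-element since $(z_1 z_2)^2 = (-1)^{\ell(w_1)+\ell(w_2)} = (-1)^{\ell(w_1 w_2)}$; and in $\Z_4$ one gets $u_1 u_2$. Their image is then $(\id_{\R^2}\otimes w_1 w_2, z_1 z_2 u_1 u_2) = (\id_{\R^2}\otimes w_1 w_2, 1)$, which is exactly the value of the map at $w_1 w_2$. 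The main obstacle is conceptual rather than computational: recognising that the metaplectic cocycle trivialises on the unitary subgroup so that the naive lift is multiplicative there, and keeping careful track of the parity of $\ell(w)$, which is what forces the use of $\Mp_2$ rather than $\Mp$.
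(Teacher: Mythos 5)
Your proof is correct, and since the paper states this proposition without any proof, your argument supplies exactly the verification it leaves implicit: membership in $\Mp_2(\Liet \oplus \Liet, j_\tau)$ via $\det_\C\bigl((w^{-1})_\C\bigr) = \det_\R(w^{-1}) = (-1)^{\ell(w)}$ (whose square is $1$), and multiplicativity of the lift $w \mapsto (\id_{\R^2}\otimes w, 1)$ via trivialization of the metaplectic cocycle over the unitary subgroup preserving $E$. Your continuity-and-connectedness argument is precisely the device the paper itself invokes at the end of the proof of proposition \ref{prop:mor-symb}, so the approach matches the paper's own template, and your observation that odd $\ell(w)$ forces the passage to the $\Z_4$-extension $\Mp_2$ correctly identifies why the statement is phrased there rather than in $\Mp$.
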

More generally, if $w $ is an element of the orthogonal group of
$\Liet$, then $( \id_{\R^2} \otimes w, u )$ belongs to the metaplectic
group (resp. the extension by $\Z_4$) if and only if $ u^2 = \det w$
(resp. $u^4 =1$).

\subsection{Index computation}

As previously  consider the metaplectic group $\Mp (S, j)$
of a symplectic vector space endowed with a complex structure. Let
$\Mp _* (S,j)$ be the subset consisting of the pairs $(g,z)$ such that
$1$ is not an eigenvalue of $g$. Then we defined in \cite{oim_LS} an index map
$$ \ind : \Mp _* (S, j ) \rightarrow \Z / 4 \Z$$
It is continuous and takes distinct values on each of the four components of
$\Mp_* (S, j)$. To compute it, we only need the two following
properties. 
If $E$ has dimension 2, then 
\begin{gather} \label{eq:index}
 \ind( g,z) = k + \tfrac{1}{2} ( 1 - (-1) ^{k+ \epsilon} ) 
\end{gather}
where $k \in \Z $ is such that the argument of $z$ belongs to
$[\frac{\pi}{2} k , \frac{\pi}{2} ( k +1 ) [$ and $\epsilon$ is equal
to $0$ if the trace of $g$ is bigger that 2 and to $1$
otherwise. Furthermore if $(g_1, z_1) \in \Mp _*(S_1, j_1)$ and $(g_2,
z_2) \in \Mp _* (S_2, j_2)$ then $(g_1 \oplus g_2, z_1 z_2)$ belongs
to $\Mp _* ( S_1 \oplus S_2 , j_1 \oplus j_2 )$ and 
\begin{gather} \label{eq:index2}
 \ind ( g_1 \oplus g_2 , z_1 z_2 ) = \ind ( g_1, z_1) + \ind ( g_2, z_2)
\end{gather}
The elements $(g,z)$ of $\Mp_2
(S,j)$ such that $1$ is not an eigenvalue of $g$, also have an index
defined modulo $4 \Z$. It is such that
$$ \ind ( g, i^{k} z) = k + \ind (g , z)$$
if $(g,z ) \in \Mp_* (S,j)$. In the following we compute the index of
some elements of the metaplectic group of $ S = \Liet \oplus \Liet$
endowed with the complex structure determined by $\tau \in \Hp$.

\begin{lem} \label{lem:index-computation}
For any hyperbolic $A \in \Sl (2, \R)$ and $w $ in the
orthogonal group of $\Liet$, we have
$$ \ind ( A \otimes w, z ) = \ind ( A \otimes \id_{\Liet}, z)$$ 
where $z$ is any complex number such that $ (A \otimes \id_{\Liet},
z)$ belongs to $\Mp_2 ( \Liet \oplus \Liet, j_{\tau})$. 
\end{lem}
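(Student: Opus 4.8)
The plan is to locate the element $A \otimes w$ inside the metaplectic extension, reduce to a rank-one model by a homotopy together with the additivity of the index, and finally compare $A$ with $-A$ on $\R^2$.

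First I would pin down the defining determinant. Writing $E = \ker(\id + i j_\tau)$ for the $(1,0)$-space of $S = \Liet\oplus\Liet = \R^2\otimes\Liet$, the complex structure $j_\tau$ acts only on the $\R^2$-factor, so $E$ factorizes as $E^{\R^2}\otimes_\C\Liet_\C$, where $E^{\R^2}$ is the one-dimensional $(1,0)$-space of $(\R^2, j_\tau)$. Since $g = A\otimes w$ respects this tensor decomposition and $w$ preserves the factor $\Liet_\C$ setwise, the projection satisfies $\pi_{E,gE} = \pi^{\R^2}_{E,AE}\otimes\id_{\Liet_\C}$, and $g^{-1}\pi_{E,gE} = d(A,\tau)\,\id_{E^{\R^2}}\otimes w_\C^{-1}$. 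Hence
$$ \det\bigl((A\otimes w)^{-1}\pi_{E,(A\otimes w)E}\colon E\to E\bigr) = d(A,\tau)^n\det(w)^{-1} = (\det w)\, d(A,\tau)^n. $$
As $(\det w)^2 = 1$, the condition $z^4 = \det^2(\ldots)$ is the same for $A\otimes w$ and for $A\otimes\id_\Liet$, so the very same $z$ makes $(A\otimes w, z)$ an element of $\Mp_2(\Liet\oplus\Liet, j_\tau)$. Moreover the eigenvalues of $A\otimes w$ are products of an eigenvalue of $A$ (real, of modulus $\neq 1$ because $A$ is hyperbolic) with an eigenvalue of $w$ (of modulus $1$), hence never $1$; so both $(A\otimes w, z)$ and $(A\otimes\id_\Liet, z)$ lie where $\ind$ is defined.

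Next I would remove the orientation-preserving part of $w$ by a homotopy. The orthogonal group $\operatorname{O}(\Liet)$ has two path-components, so $w$ joins by a path $(w_t)$ of orthogonal maps either to $\id_\Liet$ (if $\det w = 1$) or to the reflection $r = \diag(-1,1,\ldots,1)$ (if $\det w = -1$). Along such a path $A\otimes w_t$ never acquires the eigenvalue $1$, and the determinant $(\det w_t)\, d(A,\tau)^n$ is constant, so $z$ stays admissible and $(A\otimes w_t, z)$ is a continuous path in the domain of $\ind$. As $\ind$ is continuous with values in $\Z/4\Z$, it is constant along this path. For $\det w = 1$ this already gives the claim, and for $\det w = -1$ it reduces everything to proving $\ind(A\otimes r, z) = \ind(A\otimes\id_\Liet, z)$.

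I would then split off the reflected direction. Choosing an orthonormal basis $(e_i)$ of $\Liet$ with $r e_1 = -e_1$ and $r e_i = e_i$ for $i\geq 2$, the space $\Liet\oplus\Liet$ becomes a $j_\tau$-invariant, $\om$-orthogonal direct sum $\bigoplus_i(\R^2\otimes\R e_i)$; on each summand $A\otimes\id_\Liet$ acts as $A$, while $A\otimes r$ acts as $-A$ on the first summand and as $A$ on the rest. Additivity (\ref{eq:index2}), which passes from $\Mp_*$ to the $\Z_4$-extension via the rule $\ind(g, i^k z) = k + \ind(g,z)$, lets me take matching lifts on the summands with $i\geq 2$; the two indices then differ only through the first factor, so the statement becomes $\ind(-A, z') = \ind(A, z')$ in $\Mp_2(\R^2, j_\tau)$ for a common admissible $z'$ (common because $d(-A,\tau)^2 = d(A,\tau)^2$, as $d(-A,\tau) = -d(A,\tau)$ follows from (\ref{eq:dAtau}) together with $(-A)\tau = A\tau$, the latter being immediate from the Möbius formula).

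The crux, and the step I expect to be the main obstacle, is this last equality. Since $A$ is hyperbolic, $|\trace A| > 2$, so in (\ref{eq:index}) one has $\epsilon = 0$ for $A$ but $\epsilon = 1$ for $-A$; taken at face value this would force the indices to differ by $1$. The resolution is that (\ref{eq:index}) computes the index of an element of $\Mp_*$, so $z'$ must first be expressed through the branches $\zeta_A$ and $\zeta_{-A}$ determined by $\zeta_A^2 = d(A,\tau)$ and $\zeta_{-A}^2 = d(-A,\tau)$. Because $d(-A,\tau) = -d(A,\tau)$, these branches differ by a factor $i$, so writing $z' = i^{m_A}\zeta_A = i^{m_{-A}}\zeta_{-A}$ yields $m_{-A} = m_A \mp 1$. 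Fed through $\ind(g, i^k z) = k + \ind(g,z)$, this $\mp 1$ shift in the $\Z_4$-exponent exactly cancels the $\pm 1$ produced by the change of $\epsilon$ in (\ref{eq:index}), so that $\ind(-A, z') - \ind(A, z') = 0$. Carrying out this cancellation carefully, and checking that it is independent of the square-root sign choices, finishes the argument.
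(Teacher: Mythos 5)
Your proposal is correct and follows essentially the same route as the paper's proof: reduce by local constancy of the index to the case where $w$ is a reflection, split $\R^2 \otimes \Liet$ into $j_\tau$-invariant planes so that additivity (\ref{eq:index2}) reduces everything to the rank-one comparison $\ind(-A,z') = \ind(A,z')$ on $\R^2$, and settle that via formula (\ref{eq:index}), where the factor $i$ relating the admissible lifts of $A$ and $-A$ (coming from $d(-A,\tau) = -d(A,\tau)$) compensates the change of $\epsilon$. The only cosmetic differences are that you check membership in $\Mp_2$ by a direct determinant computation where the paper multiplies by $(\id_{\R^2} \otimes w, u)$, and you keep $A$ general where the paper normalizes to $\diag(2,1/2)$; just note your claim that $\epsilon = 0$ for $A$ presumes $\trace A > 2$, a harmless slip since your cancellation is symmetric under $A \leftrightarrow -A$.
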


\begin{proof} 
$A$ being hyperbolic, $1$ is not an eigenvalue of $A \otimes w$. Since
$$ (A \otimes \id_{\Liet}, z ) . ( \id_{\R^2} \otimes w , u ) = ( A
\otimes w , zu)$$
The fact that $ (A \otimes \id_{\Liet},
z)$ belongs to $\Mp_2 ( \Liet \oplus \Liet, j_{\tau})$ implies that
$(A \otimes w , z )$ also belongs to $\Mp_2 ( \Liet \oplus \Liet,
j_{\tau})$. Let us prove that they have the same index.  Since the
index is locally constant, the result is straightforward if $w$
belongs to the special orthogonal group. Otherwise we may assume that
$w$ is a reflexion and that $A$
is the diagonal matrix with coefficient $2$, $1/2$. Let us decompose
$\Liet $ as a direct sum of orthogonal lines. The complex
structure $j_\tau$ preserves the associated decomposition of $\R^2
\otimes \Liet$. So using (\ref{eq:index2}) it is sufficient to  prove that 
$$ \ind ( A, u ) = \ind ( -A, u)$$ 
One may assume that $(A, u) \in \Mp ( \R^2, \tau)$ so that $(-A, iu)
\in \Mp ( \R^2, \tau)$. Then the result follows from formula (\ref{eq:index}). 
\end{proof}

We can give explicit formulas for the index of $(A \otimes \id_{\Liet},
z)$ by decomposing $\R^2 \otimes \Liet$ into a direct sum of
$\R^2$'s as we did in the previous proof. 
\begin{lem} \label{lem:ind_pair_impair}
If $n = 2p$, for any $A \in \Sl _*(2, \R)$, we have
$$ \ind ( A \otimes
\id_ {\Liet},d(A)^p) = 2 \epsilon p $$
where $\epsilon$ is equal to $0$ if the trace of $A$ is bigger than 2
and to 1 otherwise. If $n = 2p + 1$, for any $ (A, z) \in \Mp_*( \R^2,
\tau)$, we have
$$ \ind ( A \otimes
\id_ {\Liet},d(A)^pz) = 2 \epsilon  p + \ind (A,z)$$
where $\epsilon$ is defined as previously.
\end{lem}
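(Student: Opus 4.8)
The plan is to reduce the computation of $\ind(A\otimes\id_\Liet, z)$ to a sum of two-dimensional index computations using the additivity property \eqref{eq:index2}, and then to evaluate each two-dimensional contribution via the explicit formula \eqref{eq:index}. First I would diagonalize the situation: since $A\in\Sl_*(2,\R)$ acts on $\R^2$ and $\id_\Liet$ acts on $\Liet$, the symplectic space $\R^2\otimes\Liet$ decomposes, after choosing any orthonormal basis $(u_1,\ldots,u_n)$ of $\Liet$, as an orthogonal direct sum of $n$ copies of $\R^2$, namely $\R^2\otimes\Liet=\bigoplus_{i=1}^n (\R^2\otimes u_i)$. The complex structure $j_\tau$ determined by $p+\tau q$ preserves each summand $\R^2\otimes u_i\cong\R^2$, and on each summand the operator $A\otimes\id_\Liet$ restricts to $A$. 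Hence $E=\ker(\id+ij_\tau)$ splits compatibly, each factor being two-dimensional, and the determinant $d(A,\tau)$ on the full space is the $n$-th power of the two-dimensional determinant $d(A)$ on a single copy of $\R^2$.

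Next I would set up the metaplectic data so that additivity applies. On each copy $\R^2\otimes u_i$ I choose $(A, u)\in\Mp(\R^2,\tau)$ with $u^2=d(A)$; then the product $(A, u)^{\otimes n}=(A\otimes\id_\Liet, u^n)$ lies in the metaplectic (or, if $n$ is odd, the $\Z_4$-extended) group of $\R^2\otimes\Liet$, and by \eqref{eq:index2} its index is $n$ times the index of the single two-dimensional factor. Matching the normalizing factor $d(A)^p$ to the tensor power requires tracking the square roots: since $d(A,\tau)=d(A)^n$ and $n=2p$ in the even case, the element $(A\otimes\id_\Liet, d(A)^p)$ is precisely $(A,u)^{\otimes n}$ with $u^2=d(A)$, so its index is $n\cdot\ind(A,u)=2p\cdot\ind(A,u)$. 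In the odd case $n=2p+1$, I would write $(A\otimes\id_\Liet, d(A)^p z)$ as the product of $(A,z)$ on one factor with $(A,u)^{\otimes 2p}$ on the remaining $2p$ factors, so that \eqref{eq:index2} yields $\ind(A,z)+2p\cdot\ind(A,u)$.

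The heart of the computation is then the single two-dimensional index $\ind(A,u)$ where $u^2=d(A)$, which I evaluate using \eqref{eq:index}. For a hyperbolic $A\in\Sl(2,\R)$ the determinant $d(A,\tau)$ is a negative real number when the trace is less than $-2$ and a positive real number when the trace is greater than $2$; this is the content of the parameter $\epsilon$. Concretely, using formula \eqref{eq:dAtau} one checks that $d(A)$ is real, and its sign distinguishes the two hyperbolic conjugacy types. When the trace exceeds $2$ (so $\epsilon=0$) the square root $u$ can be taken real and the argument of $u$ lies in $[0,\pi/2)$, giving $k=0$ and hence $\ind(A,u)=0$; when the trace is less than $-2$ (so $\epsilon=1$) one finds $\ind(A,u)=1$ after applying \eqref{eq:index} with the appropriate value of $k$. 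In both cases $\ind(A,u)=\epsilon$, so multiplying by $n$ gives the stated $2\epsilon p$ in the even case and the additive correction $2\epsilon p$ in the odd case.

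The main obstacle I expect is the careful bookkeeping of the square roots and arguments in the two-dimensional evaluation: the index formula \eqref{eq:index} is sensitive to which branch of the square root $u$ with $u^2=d(A)$ is chosen and to the integer $k$ determined by $\arg u$, so I must verify that the branch induced by the continuous determination in $d(A,\tau)^p$ matches the one for which \eqref{eq:index} is stated. In particular I would confirm via \eqref{eq:dAtau} that $d(A)$ is genuinely real for hyperbolic $A$ (the factors $(-c\tau+d)$ and $\tfrac{A\tau-\overline{A\tau}}{\tau-\overline{A\tau}}$ combine so that the imaginary parts cancel), and that its sign is governed exactly by $\epsilon$. Once the single-factor value $\ind(A,u)=\epsilon$ is pinned down, the additivity \eqref{eq:index2} and the tensor decomposition make the rest of the argument a routine assembly.
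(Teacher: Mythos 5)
Your strategy coincides with the paper's proof: decompose $\R^2 \otimes \Liet$ into $n$ copies of $\R^2$ preserved by $j_\tau$, apply the additivity formula (\ref{eq:index2}), and evaluate the remaining two-dimensional index with (\ref{eq:index}). (The paper reduces to $n=2$ and writes $\ind(A \oplus A, d(A)) = 2\ind(A,z)$ with $z^2 = d(A)$, then invokes (\ref{eq:index}).) Your bookkeeping of square roots in the assembly is also correct: since $u^{2p} = d(A)^p$ for either branch of $u$, the tensor-power element really is $(A \otimes \id_\Liet, d(A)^p)$, respectively $(A,z)$ composed with $2p$ copies of $(A,u)$ in the odd case.

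There is, however, one false intermediate claim: $d(A,\tau)$ is \emph{not} real for a general hyperbolic $A$ and general $\tau$. For instance, $A = \left(\begin{smallmatrix} 2 & 1 \\ 1 & 1 \end{smallmatrix}\right)$ and $\tau = 1+i$ give $A\tau = -2+i$, and (\ref{eq:dAtau}) yields $d(A,\tau) = (-i)\cdot \tfrac{2i}{3+2i} = \tfrac{6-4i}{13}$, which is not real; so the step ``the square root $u$ can be taken real, giving $k=0$'' fails as stated (it works only at special values of $\tau$, e.g. $\tau = i$ with $A$ diagonal). The gap is easily repaired, in either of two ways. First, note that only the \emph{even} multiples $2p \, \ind(A,u)$ of the single-factor index enter your formulas, so modulo $4$ you need only $\ind(A,u)$ modulo $2$, and formula (\ref{eq:index}) forces $\ind(g,z) \equiv \epsilon \pmod 2$ for \emph{any} branch and any $\tau$: if $k+\epsilon$ is even then $\ind = k \equiv \epsilon$, and if $k+\epsilon$ is odd then $\ind = k+1 \equiv \epsilon$. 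This also shows your concern about matching branches is moot: replacing $u$ by $-u$ shifts $\ind(A,u)$ by $2$, hence $2p\,\ind(A,u)$ by $4 \equiv 0$, exactly as in the paper's branch-free formulation $\ind(A\oplus A, d(A)) = 2\ind(A,z)$. Alternatively, since the index is locally constant on $\Mp_*$ and the hyperbolic elements of fixed trace sign form a connected set, deform $(A,\tau)$ to $(\pm \operatorname{diag}(2,1/2), i)$, where $d$ is genuinely real of sign $(-1)^\epsilon$, and run your computation there. With either repair your argument is complete and agrees with the paper's.
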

In the second case, the index of $(A,z)$ is given by (\ref{eq:index}). With these formulas we obtain the index of any element in the images of the morphisms (\ref{eq:mor_pair}) and (\ref{eq:mor_impair}).   
\begin{proof} 
Working with the decomposition of $\R^2 \otimes \Liet$, we
only have to consider $n=2$. We have 
$$ \ind ( A \oplus A, d(A) ) = 2 \ind (A, z)$$
where $z^2 = d (A)$. We conclude with formula (\ref{eq:index}).
\end{proof}

\subsection{Trace estimates}

Under a transversality condition, the trace of a Fourier integral
operator admits an asymptotic expansion and we can explicitly compute
the leading term in terms of the symbol. Next theorem has been proved
in \cite{oim_LS}. We restrict to the case of $\mtore ^2$ to simplify the statement. 

\begin{theo} 
Let $\Phi$ be a symplectomorphism of $\mtore ^2$ whose graph intersects transversally the diagonal. Let $\Phi_L$ be a prequantum bundle isomorphism of $L$ lifting $\Phi$ and $(T_k : H^0_\tau(\mtore ^2, L^k \otimes \delta _\tau )\rightarrow H^0_\tau (\mtore ^2, L^k \otimes \delta_\tau)  ) $ be a Fourier integral operator associated to $\Phi_L$ with symbol $f$. Then for any fixed point $x$ of $\Phi$, there exists a sequence $(a_{\ell , x})$ of complex numbers such that for any $N$, 
$$ \trace (T_k) =  \sum_{x/ \; \Phi (x) =x }   u_x^k \bigl( a_{x,0} + a_{x,1} k^{-1} + \ldots + a_{x,N} k^{-N} + O(k^{-N-1}) \bigr) $$
where for any $x$, $u_x$ is the trace of $\Phi_L(x) :L_x \rightarrow L_x$. Furthermore, if $(T_x \Phi , f(x))$ is an element of $\Mp_2 ( \Liet \oplus \Liet , j_\tau)$, then
$$ a_{x,0} = \frac{i ^{\ind (T_x \Phi , f(x)) } }{|\det ( \id - T_x \Phi )|^{1/2}} .$$
\end{theo}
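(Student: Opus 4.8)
The plan is to write the trace as the integral over the diagonal of the contracted Schwartz kernel and to localise this integral at the fixed points of $\Phi$ by a stationary phase argument. Writing $\langle \cdot \rangle$ for the pointwise contraction by the Hermitian metric of $L^k \otimes \delta_\tau$, one has $\trace(T_k) = \int_{\mtore^2} \langle S_k(x,x)\rangle\, |\om^n|/n!$. Substituting the Fourier integral operator form of $S_k$ and setting $\tilde F(x) = \langle F(x,x)\rangle$ and $\tilde g(x,k) = \langle g(x,x,k)\rangle$, this becomes $(k/2\pi)^n \int_{\mtore^2} \tilde F^k(x)\,\tilde g(x,k)\, |\om^n|/n! + O(k^{-\infty})$. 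The defining inequality $\|F(x,y)\| < 1$ for $x \neq \Phi(y)$ gives $|\tilde F(x)| < 1$ away from the fixed point set of $\Phi$, which is finite by the transversality hypothesis together with the compactness of $\mtore^2$. Hence $\tilde F^k$ is exponentially small off the fixed points and, modulo $O(k^{-\infty})$, the integral splits into contributions from disjoint neighbourhoods of the fixed points.

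First I would analyse the contribution of a single fixed point $x_0$. On a chart, trivialising $L$ by a local unit section, I write $\tilde F(x) = u_{x_0}\,e^{\phi(x)}$ with $\phi(x_0)=0$ and $\operatorname{Re}\phi \le 0$; here $u_{x_0} = \trace(\Phi_L(x_0))$ has modulus one because $\Phi_L$ is unitary on the fibre $L_{x_0}$. The normalisation $F(\Phi(x),x) = \Phi_L(u)\otimes \bar u$ and the condition $\bar\partial F \equiv 0$ along the graph determine the $2$-jet of $\phi$ at $x_0$; its Hessian is the quadratic form assembled from $j_\tau$, $\om$ and the linearisation $T_{x_0}\Phi$, and transversality (invertibility of $\id - T_{x_0}\Phi$) makes it non-degenerate with negative definite real part. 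Then $\tilde F^k(x) = u_{x_0}^k e^{k\phi(x)}$, and Laplace's method applied to $\int e^{k\phi}\tilde g\, |\om^n|/n!$, combined with the prefactor $(k/2\pi)^n$ and the expansion $g = g_0 + k^{-1}g_1 + \cdots$, produces an asymptotic series $u_{x_0}^k(a_{x_0,0} + a_{x_0,1}k^{-1} + \cdots)$; the full expansion comes from putting $\phi$ into Morse normal form and expanding the amplitude. Summing over the fixed points yields the stated formula and the coefficients $a_{\ell,x}$.

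It remains to identify $a_{x_0,0}$, which by construction equals $f(x_0)$, the restriction of $g_0$ to the diagonal at $x_0$, times the Gaussian factor $(\det(-\operatorname{Hess}\phi))^{-1/2}$ normalised against the Liouville density at $x_0$. Since both $\phi$ and the leading amplitude depend only on the $1$-jet of $\Phi$ and the linear data at $x_0$, I would reduce to the linear model: replace $\Phi$ by $T_{x_0}\Phi$ and $L$ by the trivial prequantum bundle on $(\Liet \oplus \Liet, j_\tau)$, and evaluate the resulting Gaussian integral explicitly. The modulus of the square root gives precisely $|\det(\id - T_{x_0}\Phi)|^{-1/2}$, and its argument is the branch selected by the metaplectic data $(T_{x_0}\Phi, f(x_0)) \in \Mp_2(\Liet \oplus \Liet, j_\tau)$.

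The hard part will be matching this argument with the index $\ind(T_{x_0}\Phi, f(x_0))$, i.e. proving $a_{x_0,0} = i^{\ind(T_{x_0}\Phi, f(x_0))}|\det(\id - T_{x_0}\Phi)|^{-1/2}$. Here I would use exactly the two properties that characterise the index: additivity under direct sums, equation (\ref{eq:index2}), to decompose $\R^2 \otimes \Liet$ into elementary symplectic blocks on which $T_{x_0}\Phi$ acts simply, and the explicit formula (\ref{eq:index}) to evaluate the Gaussian integral in each block and read off the power of $i$. The bookkeeping of the quadratic form's argument in the model case, and the verification that it distributes the four possible values of $i^{\ind}$ over the four components of $\Mp_*$, is the delicate step; once the model computation is done, assembling the blocks and summing over fixed points finishes the proof.
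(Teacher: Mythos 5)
You should first be aware that the paper contains no proof of this theorem: it is imported from \cite{oim_LS} (the text preceding it says ``Next theorem has been proved in \cite{oim_LS}''), restricted to $\mtore^2$ only to simplify the statement, so there is no internal argument to compare yours with. That said, your outline --- writing $\trace(T_k)$ as the integral over the diagonal of the contracted kernel, localising at the fixed points via $\|F(x,x)\|<1$ off the fixed-point set, applying complex stationary phase, and pinning the argument of the Gaussian factor by the metaplectic data --- is the natural proof of such Lefschetz-type trace formulas and is, in substance, the strategy of the cited reference.

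Two steps of your sketch would need real work. First, the quadratic decay $\operatorname{Re}\phi(x)\leqslant -c|x-x_0|^2$ is not among the defining properties of $F$ and must be derived: the graph of $\Phi^{-1}$ is Lagrangian in $(\mtore^2\times\mtore^2,\om\oplus(-\om))$, hence totally real for the complex structure $(j_\tau,-j_\tau)$, so $\bar\partial F\equiv 0$ to infinite order along the graph together with the normalisation $\|F\|=1$ there determines the $2$-jet of $F$, and transversality of graph and diagonal is precisely what makes the resulting Hessian of $\phi$ non-degenerate with negative-definite real part --- you assert the right conclusion, but this is a computation, not a hypothesis. Second, and more seriously, your plan to ``decompose $\R^2\otimes\Liet$ into elementary symplectic blocks on which $T_{x_0}\Phi$ acts simply'' works for the linearisations $A\otimes w$ occurring in the paper's application, but the theorem concerns an arbitrary symplectomorphism of $\mtore^2$, and a general element of $\Sp(\Liet\oplus\Liet)$ (for instance one with a quadruple of genuinely complex eigenvalues) admits no invariant splitting into symplectic planes, so formulas (\ref{eq:index}) and (\ref{eq:index2}) alone do not reach it. The standard completion is to perform the model Gaussian integral for arbitrary $g=T_x\Phi$, expressing $\det(-\operatorname{Hess}\phi)$ in terms of $\det(\id-g)$ and $\det(g^{-1}\pi_{E,gE})$, and then to argue by continuity: the normalised phase $a_{x,0}\,|\det(\id-g)|^{1/2}$ is continuous on the set of elements of $\Mp_2(\Liet\oplus\Liet,j_\tau)$ whose first component does not admit $1$ as an eigenvalue and is locally constant there (its fourth power is a function of $g$ alone, identically $1$), as is $i^{\ind}$ by the paper's description of the index; equality therefore only needs to be checked at one block-diagonalisable element in each connected component, which is exactly where your computation with (\ref{eq:index}) and (\ref{eq:index2}) applies.
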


We apply this to estimate the character of the representation $R_2^{\alt}$. First we have 
$$ \trace ( R_2 ^{\alt} (A,e) ) = \frac{1}{|W|} \sum_{w\in W} (-1)^{\ell (w)} \trace (w. R_2 (A,e) )  $$
Then by theorems \ref{theo:FIO_Modul} and \ref{theo:FIO_Weyl}, $w.R_2
(A,e)$ is a Fourier integral operator associated to the prequantum
lift $A \otimes w$.  Its symbol is the constant
  map equal to $\si (A,e)$. By \ref{lem:index-computation}, the index
  of $(A\otimes w, \si (A,e))$ doesn't depend on $w$.  We compute
  easily the action of the prequantum lift of $A\otimes w$ at the
  fixed points and obtain the

\begin{theo} For any $(A,e) \in \Mo_2$ such that $A$ is hyperbolic, we have 
$$ \trace ( R_2^{\alt} (A,e) ) \sim \frac{i ^{n(A,e)}}{|W|}
\sum_{\substack{ w \in W \\ u \in \mtore ^2/ \; (A \otimes w).x = x }}
(-1)^{\ell (w)} 
\frac{ e ^{ ik \theta ( A \otimes w , x ) }}{ | \det ( \id - A \otimes w ) | ^{1/2}} $$
where 
\begin{itemize} 
\item $n(A,e)$ is the index of $( A \otimes \id_{\Liet}, \si (A,e))$  
\item $\theta (A \otimes w , x ) = \pi ( B( \mu, p ) - B ( \ga , q) + B( \ga , \mu))$ if $x$ is the class of  $(p,q) \in \Liet ^2$ and $(\ga, \mu) = (A \otimes w)(p,q) - (p,q)$. 
\end{itemize}
\end{theo}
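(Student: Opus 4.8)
The plan is to reduce everything to the Lefschetz-type trace theorem stated just above, applied term by term in the averaging identity $\trace(R_2^{\alt}(A,e)) = \frac{1}{|W|}\sum_{w\in W}(-1)^{\ell(w)}\trace(w.R_2(A,e))$. By theorems \ref{theo:FIO_Modul} and \ref{theo:FIO_Weyl}, each operator $w.R_2(A,e)$ is a Fourier integral operator associated to the prequantum lift of $\Phi := A\otimes w$, with constant symbol $\si(A,e)$. So for each fixed $w$ I would check the hypotheses of the trace theorem and read off the leading term of its asymptotic expansion, then sum over $w$.

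First I would verify the transversality condition. Since $A$ is hyperbolic its eigenvalues are real of modulus $\neq 1$, while the eigenvalues of the orthogonal map $w$ have modulus $1$; hence no eigenvalue of $A\otimes w$ equals $1$, the endomorphism $\id - A\otimes w$ is invertible, and the graph of $\Phi$ meets the diagonal transversally. The fixed points of $\Phi$ on $\mtore^2$ are then the finitely many classes of $(p,q)\in\Liet^2$ with $(A\otimes w - \id)(p,q)\in\Lat^2$, whose number is $|\det(\id - A\otimes w)|$.

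The trace theorem requires two data at each fixed point $x$: the scalar $u_x$ and the leading coefficient $a_{x,0} = i^{\ind(A\otimes w,\,\si(A,e))}/|\det(\id - A\otimes w)|^{1/2}$, where $T_x\Phi = A\otimes w$ because $\Phi$ is linear. By lemma \ref{lem:index-computation} the index $\ind(A\otimes w,\si(A,e))$ equals $\ind(A\otimes\id_{\Liet},\si(A,e)) = n(A,e)$ independently of $w$, so the factor $i^{n(A,e)}$ pulls out of the entire double sum. Reassembling the averaged expression then yields exactly the claimed formula, provided $u_x$ is the phase $e^{i\theta(A\otimes w,x)}$.

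The main obstacle is precisely this computation of $u_x$, the trace of the prequantum lift on the fibre $L_x$. Unwinding section \ref{sec:symplectic-datas}: both the modular and the Weyl lifts act trivially on the $\C$-fibre over $\Liet^2$, so the lift of $\Phi$ sends the representative $((p,q),v)$ to $((p,q)+(\ga,\mu),v)$ with $(\ga,\mu) = (A\otimes w)(p,q) - (p,q)\in\Lat^2$. Rewriting this over $x$ through the Heisenberg embedding of $(\ga,\mu)$ introduces the cocycle factor $\exp(i\pi B(\ga,\mu))\exp(\tfrac{i}{2}\om((\ga,\mu),(p,q)))$, and with $\om = 2\pi B(dp,dq)$ a short computation gives $u_x = \exp(i\pi(B(\mu,p) - B(\ga,q) - B(\ga,\mu)))$. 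This differs from $e^{i\theta(A\otimes w,x)}$ only in the sign of the term $B(\ga,\mu)$; but $\ga,\mu\in\Lat$ and $B$ is integer-valued on $\Lat$, so $B(\ga,\mu)\in\Z$ and hence $u_x^k = e^{ik\theta(A\otimes w,x)}$ for every integer $k$. This integrality, which converts the bare geometric holonomy into the Chern--Simons phase $\theta$, is the delicate point of the argument; the remaining steps are then routine bookkeeping in the averaged sum.
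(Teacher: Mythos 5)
Your proposal is correct and follows essentially the same route as the paper: the averaging identity over $W$, the identification of each $w.R_2(A,e)$ as a Fourier integral operator via theorems \ref{theo:FIO_Modul} and \ref{theo:FIO_Weyl}, the trace theorem with transversality supplied by hyperbolicity, and the $w$-independence of the index from lemma \ref{lem:index-computation}. The only step the paper leaves implicit ("we compute easily the action of the prequantum lift at the fixed points") you carry out in full, and your Heisenberg-cocycle computation of $u_x$, including the sign discrepancy in $B(\ga,\mu)$ resolved by its integrality on $\Lat$, is accurate.
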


We can explicitly compute the indices with lemma \ref{lem:ind_pair_impair}. For the statement in the introduction we used the two morphisms (\ref{eq:mor_pair}) and (\ref{eq:mor_impair}).

\appendix

\section{Proofs of theorem \ref{theo:base_theta} and
  \ref{theo:modular-action}} 

\subsection{The basis of $H^{0}_\tau (\mtore^2, L^k)$}

Recall that the holomorphic sections of $L^k$
identify with the sections over $\Liet^2$ of the form  $f s^k$ such that $f :
\Liet^2 \rightarrow \C$ is holomorphic and $f s^k$ is $\Lat
^2$-invariant. As shows a straightforward computation, this invariance
is equivalent to   
\begin{gather*} 
 f(p + \dot {p}, q + \dot {q} ) = f ( p, q ) \exp \bigl(- 2 i \pi k \bigl(  B( \zeta , \dot q ) +\tfrac{  \tau}{2} B( \dot q , \dot q)  \bigr) \bigr)
\end{gather*}
for all $\dot p, \dot q$ in $\Lat$. 
Then to prove that the sections $ \Theta_{\mu , k } s^k$ form a basis
of the holomorphic sections of $L^k$, we decompose the functions $f$
as a Fourier series in the $p$ variable with coefficients depending on
$q$. Then the holomorphy and the equivariance in the $q$-directions
translate into a condition on the coefficients, leading to the result.  

Let us shows that the sections $ \Theta_{\mu,k} s^k$  are mutually
  orthogonal and compute their norms. 
 Let $m$ be the Riemannian volume of $\Liet$. The
  Liouville measure $|\om ^n|/n!$ of $\Liet^2$ is equal to $(2\pi)^n m
  (p) \otimes m(q)$.  The scalar product of $f s^k $ and $f' s^k$ is given by 
\begin{gather*} 
 ( 2 \pi ) ^n  \int_{(p,q ) \in D^2 } f(p,q) \overline{f'(p,q )} \; |s(p,q)|^{2k}  m (p) \otimes m(q)
\end{gather*}
Here $D$ is the fundamental domain $\bigl\{ x_1 \mu_1 + \ldots + x_n
\mu_n /\;  (x_i) \in [0,1]^n \bigr\}$, where $(\mu_i)$
is a basis of the lattice $\Lat$.   Now a straightforward computation
shows that
$$ \Theta_{\mu, k } \overline{\Theta}_{\mu' , k } | s (p,q)|^{2k} = \sum
_{\substack{\ga \in \mu + \Lat \\  \ga' \in \mu' + \Lat }} c_{\ga, \ga'}
(q) 
\exp ( 2i \pi k B( p, \ga' - \ga ) ) $$
where the diagonal coefficients are given by $$ c_{\ga, \ga } ( q) = \exp ( i \pi k ( \tau - \bar{ \tau} ) B( q -
\ga, q - \ga )).$$ Integrating with respect to $p$, one deduces that
the scalar product of $ \Theta_{\mu, k }$ and $\Theta_{\mu' , k }$
vanishes when $\mu \neq \mu' $ mod $\Lat$. Furthermore, 
$$ \| \Theta_{\mu,k} s^k \| ^2 = ( 2 \pi ) ^n  \operatorname{Vol}(
\Liet/ \Lat) \sum_{\ga \in \mu + \Lat } \int_D c_{\ga, \ga} (q) m
(q)$$
Using that $\Liet$ is the disjoint union of the $- \ga + D$ when $\ga$
runs over $\mu + \Lat$, we obtain
\begin{xalignat*}{2}  
\sum_{\ga \in \mu + \Lat } \int_D c_{\ga, \ga} (q) m
(q) = & \int_ \Liet \exp ( i \pi k ( \tau - \bar \tau ) B( q,q ))  \; m (
q) \\
= & \Bigl( \frac{ i
} { k(\tau - \bar \tau) } \Bigr)^{n/2}
\end{xalignat*}
which ends the computation of the norm.

\subsection{The action of $S$ and $T$ in the basis of Theta functions} 

Let us prove theorem \ref{theo:modular-action}.
Denote by $\varphi_A$ the map sending $( p, q )$ into $ ( ap + bq ,
cp + dq)$. Recall that $\zeta_{\tau}  =p + \tau q$ and $s_\tau= \exp (
i \pi B( \zeta_\tau ,
q))$. Then 
$$ \varphi_A ^* \zeta_{A.\tau}  = \frac{ \zeta_\tau }{ - c \tau + d },
\quad \varphi_A ^* s_{A.\tau}  =
 \exp \Bigl( i \pi \frac{ c B( \zeta_\tau , \zeta _\tau ) } { - c\tau + d } \Bigr) s_\tau 
$$  
So 
$$ \varphi_A^* ( f (\zeta_{A.\tau}) s^k_{A.\tau} ) = f \Bigl(  \frac{ \zeta_\tau }{ - c \tau + d } \Bigr) \exp \Bigl( i \pi k
\frac{ c B( \zeta_\tau, \zeta_\tau ) } { - c\tau + d } \Bigr) s^k_\tau $$
In particular, for $A = T^{-1}$, this gives
$$\varphi_{A}^* ( f (\zeta_{A.\tau} ) s^k_{A.\tau} )
= f ( \zeta_{\tau } )  s^k_{\tau  } $$
so that $$ \varphi_{A}^* (\Theta^{A.\tau}_{\mu, k} s^k_{A.\tau} )
=  \sum_{ \ga \in \mu + \Lat} \exp \bigl( 2 i \pi k \bigl(
  \tfrac{\tau +1}{2} B( \ga, \ga ) - B( \zeta_\tau , \ga ) \bigr) \bigr) s^k_{\tau  }  $$  
Using that $B$ take integral even values on the diagonal of $\Lat^2$,
one shows that $\exp ( i k
\pi B( \ga, \ga) ) = \exp ( i k
\pi B( \mu, \mu) ) $ for any $\ga \in \mu + \Lat$. This implies
that 
$$  \varphi^*_A ( \Theta^{A.\tau}_{\mu, k}s^k_{A.\tau} ) = \sum_{ \ga \in \mu + \Lat} \exp ( i k
\pi B( \mu, \mu) ) \Theta^{\tau} _{\mu, k} s^k_{\tau  } $$
which proves the second formula of the theorem. 

Assume now that $A = S^{-1} $, then 
 $$\varphi^* ( f (\zeta_{A.\tau}) s^k_{A.\tau}  )
= f \Bigl(\frac{\zeta_\tau}{\tau} \Bigr) \exp \Bigl( - \frac{i
  \pi k } { \tau} B( \zeta_\tau, \zeta_\tau)  \Bigr) s^k_\tau$$
Applying to the theta functions, we have
$$ \varphi^* ( \Theta^{A.\tau}_{\mu, k } s^k_{A.\tau}) = \sum_{\ga \in \mu + \Lat} \exp
\Bigl( - \frac{i \pi k }{ \tau} B( \ga + \zeta_{\tau}, \ga + \zeta_{\tau} ) \Bigr)
\; s^k_{\tau} $$
Applying Poisson summation formula, we obtain after some computations that $\varphi^* (
\Theta_{\mu, k }^{A.\tau} s^k_{A.\tau})$ is equal to 
$$  \Bigl( \frac{ \tau}{ik}
\Bigr) ^{ n/2} \operatorname{Vol} ( \Liet / \Lat )^{-1}
\sum_{\ga  \in k^{-1} \Lat^* } \exp \bigl( 2 i \pi k ( \tfrac{ \tau} {2} B (
\ga, \ga ) - B( \mu + \zeta_{\tau}, \ga ) ) \bigr) s ^k_{\tau} 
$$
Since $k^{-1} \Lat^* = \bigcup (\mu' + \Lat)$, where $\mu' $ runs over
$k^{-1} \Lat^*$ mod $\Lat$, this is equal to
$$ \Bigl( \frac{ \tau}{ik}
\Bigr) ^{ n/2} \operatorname{Vol} ( \Liet / \Lat )^{-1}
\sum_{\substack {\mu'
    \in k^{-1} \Lat^* \\ \operatorname{mod} \Lat}} \exp(-  2 i \pi k B(
\mu, \mu') )  \Theta^{  \tau} _{\mu ', k
} \; s^k_{\tau}$$
which ends the proof.

\section{Index}

Lie group notations: 
\begin{tabbing}
\indent \= $M_\mu$, $M_0=M G$\quad \= \kill 
\> $\Lie$, $B$ \> Lie algebra of $G$ and its basic inner product;
\ref{sec:notation} \\
\> $\mtore$, $\Liet$, $\Lat$ \> maximal torus, its Lie algebra and integral Lattice; \ref{sec:notation}\\
\> $\Alc$, $W$  \> open fundamental Weyl alcove and Weyl group; \ref{sec:notation}\\
\>  $\ell:W \rightarrow \{ \pm 1 \}$  \> alternating character; \ref{sec:notation}
\end{tabbing}

\noindent Moduli space and its quantization:
\begin{tabbing}
\indent \= $M_\mu$, $M_0=M G$\quad \= \kill 
\> $p, q$, $\zeta = p + \tau q $ \> projections from $\Liet^2$ onto
$\Liet$, complex coordinates; \ref{sec:preq-bundle-Liet2}, \ref{sec:compl-struct-theta} \\ 
\> $\om$, $L_{\Liet^2}$ \> symplectic form and prequantum bundle of
$\Liet^2$;   \ref{sec:preq-bundle-Liet2} \\
\> $L$  \> prequantum bundle of $\mtore^2$; \ref{sec:heis-group-reduct} \\
\> $s$, $\Theta_{\mu,k}$ \> section of $L_{\Liet^2}$ and theta
function; \ref{sec:compl-struct-theta} \\ 
\> $ (\chi_{\mu,k})_\mu$ \> basis of alternating sections of $H^0_\tau (
\mtore^2, L^k)$; \ref{sec:alternating-sections}  
\end{tabbing}

\noindent Modular group extensions and their representation
\begin{tabbing}
\indent \= $M_\mu$, $M_0$\quad \= \kill 
\> $\Mo_2$, $R_2$ \> extension by $\Z_2$ of $\Mo$,
representation in $H^0_\tau(\mtore^2, L^k)$; \ref{sec:modular-action} \\  
\> $ R_2^{\alt}$  \>  representation in the subspace of alternating
sections; \ref{sec:modular-action} \\
\> $ \Mo_\infty$, $R_\infty$ \> extension by $\Z$ of $\Mo$ and its
representation; \ref{sec:rep_infty}
\end{tabbing}

\bibliography{biblio}
 
\end{document}